\documentclass{amsart}

\setlength{\textwidth}{6.0in}
\setlength{\evensidemargin}{0.25in}
\setlength{\oddsidemargin}{0.25in}
\setlength{\textheight}{9.0in}
\setlength{\topmargin}{-0.5in}
\setlength{\parskip}{2mm}
\setlength{\baselineskip}{1.7\baselineskip}

\newtheorem{theorem}{Theorem}[section]
\newtheorem{lemma}[theorem]{Lemma}
\newtheorem{remark}[theorem]{Remark}
\newtheorem{proposition}[theorem]{Proposition}
\newtheorem{corollary}[theorem]{Corollary}

\newtheorem{example}[theorem]{Example}

\usepackage{amsmath, amsthm, amssymb}
\usepackage{mathtools}
\usepackage{hyperref}
\usepackage{enumerate}
\usepackage{nicefrac}
\usepackage{tikz}

\newcommand{\cemph}[1]{\emph{\color{red}#1}}

\newcommand\R{\mathbb{R}}
\newcommand\B{\mathbb{B}}
\newcommand\N{\mathbb{N}}
\newcommand\CK{\mathcal{K}}

\DeclareMathOperator{\conv}{conv}
\DeclareMathOperator{\aff}{aff}
\DeclareMathOperator{\lin}{lin}
\DeclareMathOperator{\inte}{int}
\DeclareMathOperator{\bd}{bd}
\DeclareMathOperator{\vol}{vol}

\newcommand\gauge[1]{\left\| #1 \right\|}

\newcommand\op[2]{#1 \vert #2}

\newcommand\jel{\mathcal{E}_J}
\newcommand\lel{\mathcal{E}_L}

\title[Volume Extremal k-Ellipsoids]{Tightening Inequalities on Volume Extremal $k$-Ellipsoids Using Asymmetry Measures}
\author[R.~Brandenberg]{Ren\'e Brandenberg}
\author[F.~Grundbacher]{Florian Grundbacher}
\keywords{John ellipsoid, Loewner ellipsoid, $k$-Ellipsoid, John decomposition, John asymmetry, Geometric inequalities under affinity, Banach-Mazur distance}
\subjclass[2020]{Primary 52A40; Secondary 52A38}
\date{December 31, 2024}

\begin{document}

\maketitle

\begin{abstract}
We consider two well-known problems: upper bounding the volume of lower dimensional ellipsoids contained in convex bodies given their John ellipsoid, and lower bounding the volume of ellipsoids containing projections of convex bodies given their Loewner ellipsoid. For the first problem, we use the John asymmetry to unify a tight upper bound for the general case by  Ball with a stronger inequality for symmetric convex bodies. We obtain an inequality that is tight for most asymmetry values in large dimensions and an even stronger inequality in the planar case that is always best possible. In contrast, we show for the second problem  an inequality that is tight for bodies of any asymmetry, including cross-polytopes, parallelotopes, and (in almost all cases) simplices. Finally, we derive some consequences for the width-circumradius- and diameter-inradius-ratios when optimized over affine transformations and show connections to the Banach-Mazur distance.
\end{abstract}

\section{Introduction and Notation}

Inequalities between geometric functionals have long been a central field of study in convex geometry. Following Jung's inequality on the Euclidean diameter-circumradius-ratio \cite{Ju},
especially results comparing different radial functionals have gained significant attention.
They include classics such as Steinhagen's inequality for Euclidean spaces \cite{St},
Bohnenblust and Leichtweiss' analogs of the mentioned inequalities for general normed spaces \cite{Bo,Le}, and other ground-laying results involving volume functionals \cite{Bi,Pal} or affinity \cite{Be,Jo1,Jo}. Research in this direction continues to this day
(cf.~\cite{Al,BeHe,BrGorRD,BrKoSharp,He}).
While these inequalities form fundamental insights for themselves, they are also important for applications in other mathematical contexts and approximation algorithms.

In the literature, significant effort has been put into comparing families of inner and outer radii of convex bodies and computing them explicitly for regular polytopes (cf.~\cite{BeHe,Br,GrKlRad,He} and the references therein).
Of particular interest to our work are the outer and inner $k$-radii.
The \cemph{outer $k$-radius} $R_k$ is defined as the smallest radius of a Euclidean $k$-ball containing the projection of a body onto some linear $k$-space.
The \cemph{inner $k$-radius} $r_k$ is the largest radius of a Euclidean $k$-ball contained in a body. Other variants of these radii have been considered as well, though we shall not consider them here.
Note that $R \coloneqq R_n$ is simply the \cemph{circumradius}, $w \coloneqq 2 R_1$ the \cemph{minimal width}, $r \coloneqq r_n$ the \cemph{inradius}, and $D \coloneqq 2 r_1$ the \cemph{diameter}.
All of these radii have been computed explicitly for regular cross-polytopes, cubes, and regular simplices in dependence of their in- or circumradius \cite{Br}.

In the present work, we consider variants of these radii-comparison-problems where, in addition, affine transformations are involved.

In the first problem, we give a lower bound on the volume of $k$-ellipsoids containing the projection of a convex body $K$ onto some linear $k$-space in dependence of the \cemph{Loewner ellipsoid} $\lel(K)$ of $K$, i.e., its volume minimal circumscribed $n$-ellipsoid. In the second problem, we study upper bounds on the volume of $k$-ellipsoids contained in $K$ in dependence of its \cemph{John ellipsoid} $\jel(K)$, which is the volume maximal $n$-ellipsoid inscribed in $K$.
We refer to these problems as \emph{outer-} or \emph{inner-$k$-ellipsoid problem}, respectively.
Finally, we derive bounds for different ratios of radial functionals when affine transformations of the underlying bodies are allowed.

We write $\CK^n$ for the family of \cemph{(convex) bodies} in $\R^n$, i.e., compact convex sets with non-empty interior.
The \cemph{Euclidean unit ball} is denoted by $\B^n$
and the $k$-dimensional \cemph{volume} by $\vol_k$.
For $X,Y \subset \R^n$, their \cemph{Minkowski sum} is given by $X+Y \coloneqq \{ x+y : x \in X, y \in Y \}$.
Given $t \in \R^n$ and $\rho \in \R$, the \cemph{$t$-translation} and \cemph{$\rho$-dilatation} of $X$ are defined by $X+t \coloneqq t+X \coloneqq \{t\}+X$ and $\rho X \coloneqq \{ \rho x : x \in X \}$, respectively.
We abbreviate $-X = (-1) X$ and
$[m] \coloneqq \{ i \in \N : i \leq m \}$ for $m \in \N_0$.

The inner-$k$-ellipsoid problem goes back to a result by Ball stemming from his study of the inner $k$-radii of regular simplices.
He solved the general case of this problem by proving the following result \cite[Proposition~4]{Ball}.

\begin{proposition} \label{prop:ball_thm}
Let $k \in [n-1]$, $K \in \CK^n$ with $\jel(K) = \B^n$,  and $E \subset K$ be a $k$-ellipsoid. Then
\[
    \vol_k(E) \leq \vol_k \left( \sqrt{ \frac{n (n+1)}{k (k+1)} } \, \B^k \right),
\]
with equality for the inscribed Euclidean $k$-balls of $k$-faces of regular simplices.
\end{proposition}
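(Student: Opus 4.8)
The key tool is John's characterization of the maximal volume inscribed ellipsoid: since $\jel(K) = \B^n$, there exist contact points $u_1, \dots, u_m \in \bd K \cap \bd \B^n$ and weights $c_i > 0$ such that $\sum c_i u_i = 0$ and $\sum c_i u_i u_i^\top = I_n$ (the John decomposition of the identity). Taking traces gives $\sum c_i = n$. Now let $E \subset K$ be a $k$-ellipsoid; write $E = x_0 + T B^k$ where $T : \R^k \to \R^n$ is linear and injective, so $\vol_k(E) = \vol_k(B^k) \sqrt{\det(T^\top T)}$. The plan is to bound $\det(T^\top T)$ from above using the constraint that $E \subset K$, hence $E$ lies on the correct side of each supporting hyperplane at $u_i$: that is, $\langle u_i, y \rangle \le 1$ for all $y \in E$, equivalently $\langle u_i, x_0 \rangle + \|T^\top u_i\| \le 1$ for each $i$.

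Set $t_i := T^\top u_i \in \R^k$. The first main step is to combine the contact-point constraints with these supporting-hyperplane inequalities. From $\sum c_i u_i u_i^\top = I_n$ one gets $\sum c_i t_i t_i^\top = T^\top T$, and from $\sum c_i u_i = 0$ one gets $\sum c_i t_i = T^\top \cdot 0 = 0$ — wait, more carefully $\sum c_i \langle u_i, x_0\rangle u_i = x_0$ after applying $\sum c_i u_i u_i^\top = I$, while the affine constraints read $\langle u_i, x_0\rangle \le 1 - \|t_i\|$. The plan is to use the inequality $\|t_i\| \le 1 - \langle u_i, x_0\rangle$ together with $\sum c_i t_i t_i^\top = T^\top T$ and a convexity/concavity argument: $\det$ is log-concave, and we want to push the $t_i$ to have equal norm. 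Concretely, I would bound
\[
    \det(T^\top T) = \det\Bigl(\sum_i c_i t_i t_i^\top\Bigr)
\]
and use that $\sum_i c_i \|t_i\|^2 = \operatorname{tr}(T^\top T) = \sum_i c_i \|t_i\|^2$; combining $\operatorname{tr}(T^\top T) = \sum c_i \|t_i\|^2 \le \sum c_i (1 - \langle u_i, x_0\rangle)^2$ and expanding using $\sum c_i u_i = 0$, $\sum c_i = n$ gives $\operatorname{tr}(T^\top T) \le \sum c_i (1 + \langle u_i, x_0\rangle^2) = n + \sum c_i \langle u_i, x_0\rangle^2 = n + \|x_0\|^2 \le n + 1$ — the last step needing $\|x_0\| \le 1$, which holds because $x_0 \in E \subset K$ and $K \subseteq n B^n$... actually $\|x_0\|\le 1$ needs more care since $K$ need not lie in $B^n$; instead I would observe $x_0 \in E \subset K$ and use a separate argument, or absorb it: the honest bound is $\operatorname{tr}(T^\top T) \le n + \langle\text{something}\rangle$, and one shows the extremal configuration has $x_0$ the incenter so this term is controlled.

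Once $\operatorname{tr}(T^\top T) \le n+1$ is established (possibly after centering so that $x_0 = 0$, legitimate since translating $K$ and $E$ together preserves all hypotheses and $\jel(K) = B^n$ forces a near-centered configuration via the contact points), the final step is purely the AM–GM inequality: if $\lambda_1, \dots, \lambda_k > 0$ are the eigenvalues of $T^\top T$, then
\[
    \det(T^\top T) = \prod_{j=1}^k \lambda_j \le \left(\frac{1}{k}\sum_{j=1}^k \lambda_j\right)^k = \left(\frac{\operatorname{tr}(T^\top T)}{k}\right)^k \le \left(\frac{n+1}{k}\right)^k.
\]
Hence $\vol_k(E) = \vol_k(B^k)\sqrt{\det(T^\top T)} \le \vol_k(B^k)\bigl((n+1)/k\bigr)^{k/2} = \vol_k\bigl(\sqrt{(n+1)/k}\,B^k\bigr)$. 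Comparing with the claimed bound $\sqrt{n(n+1)/(k(k+1))}$, I see my trace bound must actually be sharpened to $\operatorname{tr}(T^\top T) \le k(n+1)/(k+1)$; the extra factor $k/(k+1)$ versus $1$ comes from not squandering the centering term — the correct computation keeps the barycenter of the $t_i$ nonzero and uses that $E$'s center, not being the origin, forces $\sum c_i \langle u_i, x_0\rangle \|t_i\|$-type cross terms. The delicate point, and the main obstacle, is precisely this: extracting the factor $k/(k+1)$ requires simultaneously optimizing over the position of $x_0$ and the shape of $T$, which is where the simplex geometry enters. I would handle it by writing everything in terms of the $(k+1)$ "lifted" vectors $(t_i, \sqrt{...})$ in $\R^{k+1}$ and applying the trace/AM–GM argument one dimension up, so that the regular simplex's $k$-face — whose vertices' lifts are a regular simplex in $\R^{k+1}$ — saturates every inequality used; equality analysis then gives exactly the stated extremizers.
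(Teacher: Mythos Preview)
Your setup is right and your trace bound is exactly the first half of Ball's argument: with $t_i = T^\top u_i$ and the supporting-hyperplane constraint $\|t_i\| \le 1 - \langle u_i, x_0\rangle$, one gets
\[
    \operatorname{tr}(T^\top T) = \sum_i c_i \|t_i\|^2 \le \sum_i c_i (1 - \langle u_i, x_0\rangle)^2 = n + \|x_0\|^2,
\]
which is the paper's inequality~\eqref{eq:inner_cnorm_ineq}. But this bound is \emph{increasing} in $\|x_0\|$, and your attempt to cap it by $n+1$ via $\|x_0\|\le 1$ fails: in the extremal configuration (the inball of a $k$-face of the regular simplex) one has $\|x_0\|^2 = \frac{n(n-k)}{k+1}$, which exceeds $1$ as soon as $k<n-1$. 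So the trace bound alone, followed by AM--GM on the eigenvalues, cannot reach the correct constant, and no amount of ``centering'' repairs this.

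What you are missing is a second, independent inequality that \emph{decreases} in $\|x_0\|$. Ball obtains it from his Lemma~5 (Proposition~\ref{prop:ball_lemma} in the paper): applying that lemma to the vectors $x^i = c_i \sum_j \alpha_j \langle u_i, v^j\rangle v^j$ yields, after expanding,
\[
    \Bigl(\sum_j \alpha_j\Bigr)^2 \le n^2 - \|x_0\|^2,
\]
which is the paper's inequality~\eqref{eq:inner_general_neq_cnorm_ineq}. Now one has two bounds on $\bigl(\vol_k(E)/\vol_k(\B^k)\bigr)^{2/k}$, namely $(n+\|x_0\|^2)/k$ from the quadratic mean and $(n^2-\|x_0\|^2)/k^2$ from the arithmetic mean; the minimum of these over $\|x_0\|^2 \ge 0$ is exactly $\frac{n(n+1)}{k(k+1)}$, attained when $\|x_0\|^2 = \frac{n(n-k)}{k+1}$. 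Your ``lifting to $\R^{k+1}$'' gesture does not supply this second inequality; the actual content of Ball's lemma is a Cauchy--Schwarz/Jensen argument on the contact directions that has no obvious interpretation as a one-dimension-up trace bound.
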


The duality of the two problems above is apparent from their statements.
While this duality might suggest that the best possible bounds should correspond to each other accordingly,
it turns out that this is not the case.
For the outer-$k$-ellipsoid problem, we prove a bound that is even stronger than what one would obtain by taking the reciprocal of the radius of the $k$-ball in the above inequality in Section~\ref{sec:outer}.
We denote the \cemph{orthogonal projection} of a vector $x \in \R^n$ and a set $X \subset \R^n$
onto a linear $k$-space $F$ by $\op{x}{F}$ and $\op{X}{F}$, respectively.

\begin{theorem} \label{thm:outer}
Let $k \in [n-1]$, $K \in \CK^n$ with $\lel(K) = \B^n$, $F \subset \R^n$ a linear $k$-space, and $E \subset F$ a $k$-ellipsoid with $\op{K}{F} \subset E$. Then
\[
    \vol_k(E)
    \geq \vol_k \left( \sqrt{\frac{k}{n}} \, \B^k \right),
\]
with equality if and only if 
$E = \sqrt{\frac{k}{n}} \, (\op{\B^n}{F})$.
Moreover, there exists a family of convex bodies that is continuous with respect to the Hausdorff distance, includes a regular cross-polyope and a cube, and can for $n$ odd or $k \notin \{1,n-1\}$ even be extended to include a regular simplex, such that equality holds for appropriately chosen $k$-balls for any body in the family.
\end{theorem}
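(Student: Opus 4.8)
The plan is to derive the inequality from the Löwner form of John's decomposition theorem, read off the equality case from that same computation, and then build the extremal family essentially by hand.

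\emph{The inequality and its equality case.} Since $\lel(K)=\B^n$, John's theorem (in its circumscribed version) provides contact points $u_1,\dots,u_m\in\bd K\cap S^{n-1}$ and weights $c_1,\dots,c_m>0$ with $\sum_i c_iu_i=0$ and $\sum_i c_iu_iu_i^T=I_n$; taking traces gives $\sum_i c_i=n$. Putting $v_i:=\op{u_i}{F}=P_Fu_i$ (with $P_F$ the orthogonal projection onto $F$) and compressing the identity to $F$ yields $\sum_i c_iv_iv_i^T=\mathrm{id}_F$ and $\sum_i c_iv_i=0$. Writing $E=z+A(\B^n\cap F)$ with $z\in F$ and $A$ a positive definite self-adjoint map of $F$ with semi-axes $\lambda_1,\dots,\lambda_k$, so that $\vol_k(E)=\vol_k(\B^k)\prod_j\lambda_j$, each inclusion $v_i\in\op{K}{F}\subset E$ reads $(v_i-z)^TA^{-2}(v_i-z)\le1$; multiplying by $c_i$, summing, and using $\sum_i c_iv_i=0$ to annihilate the linear term one obtains
\[
    \operatorname{tr}(A^{-2})+n\,z^TA^{-2}z=\sum_i c_i(v_i-z)^TA^{-2}(v_i-z)\le\sum_i c_i=n .
\]
Hence $\sum_{j=1}^k\lambda_j^{-2}\le n$, and AM--GM gives $\big(\prod_j\lambda_j\big)^{-2/k}\le\frac1k\sum_j\lambda_j^{-2}\le\frac nk$, i.e.\ $\prod_j\lambda_j\ge(k/n)^{k/2}$, which is the claimed bound. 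Equality forces $z^TA^{-2}z=0$, so $E$ is centred at the origin of $F$; equality in AM--GM forces $\lambda_1=\dots=\lambda_k$, and $\sum_j\lambda_j^{-2}=n$ then forces $\lambda_j=\sqrt{k/n}$; finally each inclusion $v_i\in E$ must be tight. Since the orthogonal projection of $\B^n$ onto the linear space $F$ is the unit ball of $F$, this is exactly $E=\sqrt{k/n}\,(\op{\B^n}{F})$, the converse being trivial.

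\emph{The extremal family.} The key observation is that a body $K$ attains equality with the ball $E=\sqrt{k/n}\,(\op{\B^n}{F})$ if and only if $\lel(K)=\B^n$ and $K$ is contained in the solid elliptic cylinder $C_F:=\{x\in\R^n:|\op{x}{F}|\le\sqrt{k/n}\}$ (necessity comes from the equality analysis together with the trace identity $\sum_i c_i|v_i|^2=k$, forcing all projected contact points onto $\sqrt{k/n}\,S^{k-1}$; sufficiency is immediate as then $\op{K}{F}\subset\op{C_F}{F}=\sqrt{k/n}\,\B^k$). Three checks then supply the distinguished members: the cube $\frac1{\sqrt n}[-1,1]^n$ lies in $C_F$ for any coordinate $k$-space $F$; the regular cross-polytope $\conv(\pm e_1,\dots,\pm e_n)$ lies in $C_F$ whenever $\langle P_Fe_i,e_i\rangle=k/n$ for all $i$, and such a ``balanced'' $k$-space exists for every $k\in[n-1]$ (use $\lfloor k/2\rfloor$ pairs of real Fourier modes together with one coordinate-constant unit vector when $k$ is odd); and both polytopes have Löwner ellipsoid $\B^n$. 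To link everything I would use that $\lel(\B^n\cap C_F)=\B^n$ for \emph{every} $k$-space $F$, which I would verify by noting that the $4k(n-k)$ points $\pm\sqrt{k/n}\,f_i\pm\sqrt{(n-k)/n}\,g_j$ — with $\{f_i\}$, $\{g_j\}$ orthonormal bases of $F$ and $F^\perp$ — all lie in $\bd(\B^n\cap C_F)\cap S^{n-1}$ and, with equal weights, form a John decomposition of $I_n$. Each distinguished polytope $K$ is then bridged to $\B^n\cap C_F$ along $s\mapsto\conv\big(K\cup s(\B^n\cap C_F)\big)$, $s\in[0,1]$, which stays inside $C_F$ and contains $K$, hence keeps Löwner ellipsoid $\B^n$; and the various bodies $\B^n\cap C_F$ are bridged to each other by moving $F$ continuously in the Grassmannian. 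All of this is Hausdorff-continuous, and the regular simplex inscribed in $\B^n$ is attached by the same bridging device once a $k$-space $F$ is found with each of its $n+1$ vertices projecting onto $\sqrt{k/n}\,S^{k-1}$.

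\emph{The main obstacle.} That last point is where the real difficulty sits. Realising the simplex vertices inside $\mathbf 1^\perp\subset\R^{n+1}$, the requirement becomes the existence of a rank-$k$ orthogonal projection on $\R^{n+1}$ annihilating $\mathbf 1$ with constant diagonal $k/(n+1)$ — equivalently, an equal-norm tight frame of $n+1$ vectors in $\R^k$ with barycentre $0$. For $k\in\{1,n-1\}$ the orthogonal complement of such an $F$ inside $\mathbf 1^\perp$ is a line spanned by a $\pm\frac1{\sqrt{n+1}}$-valued vector of zero coordinate sum, which exists iff $n+1$ is even; this is precisely the source of the restriction ``$n$ odd or $k\notin\{1,n-1\}$''. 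Establishing the converse — that the frame exists in all remaining cases (harmonic frames handle $k$ even and also $k$ odd with $n$ odd, while the residual range of odd $k$ with $3\le k\le n-3$ and $n$ even needs a separate, more delicate construction) — is the step I expect to demand the most care, the inequality itself being comparatively routine.
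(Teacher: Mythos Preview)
Your proof of the inequality and its equality characterisation is essentially the paper's Lemma~\ref{lem:outer}: the same weighted sum over the John decomposition, the same trace identity, the same AM--GM step. Only the parametrisation of $E$ differs cosmetically.

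For the extremal family the approaches diverge. The paper does not build anything from scratch: it quotes the known computation of the outer $k$-radii of regular cross-polytopes, cubes, and simplices (Proposition~\ref{prop:k-rad}, due to Brandenberg), rotates all three polytopes so that a single $k$-space $F$ realises $R_k = \sqrt{k/n}$ for each, and then interpolates directly between them by convex hulls of the form $\conv(P_1 \cup 2t P_\infty)$, $\conv((2(1-t))P_1 \cup P_\infty)$, etc. Each interpolant contains one of the distinguished polytopes (so $\lel = \B^n$) and projects into the same ball in $F$. Your hub-and-spoke construction through $\B^n \cap C_F$, moving $F$ along the Grassmannian, is sound and more self-contained, and your verification that $\lel(\B^n \cap C_F) = \B^n$ via the explicit contact points $\pm\sqrt{k/n}\,f_i \pm \sqrt{(n-k)/n}\,g_j$ is correct --- but it is also more work than the paper needs.

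The one genuine gap is the one you flag yourself: the simplex case. Your reformulation as an equal-norm tight frame of $n+1$ vectors in $\R^k$ with barycentre zero is correct, and your identification of the obstruction for $k \in \{1,n-1\}$ with $n$ even is exactly right. But you have not actually established existence in the remaining cases, and this is not a detail --- it is the entire content of the ``moreover'' clause for the simplex. The paper sidesteps the frame construction entirely by invoking Proposition~\ref{prop:k-rad}, which already records that $R_k(T)/R(T) = \sqrt{k/n}$ for the regular simplex whenever $n$ is odd or $k \notin \{1,n-1\}$; combined with the equality characterisation, this immediately gives the required $F$. If you want to keep your argument self-contained, you must either complete the frame construction or cite the same result.
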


Let us now discuss the duality of the two problems above in more detail to explain the discrepancy between Proposition~\ref{prop:ball_thm} and Theorem~\ref{thm:outer}.
Denoting the \cemph{polar} of a set $X \subset \R^n$ by $X^\circ \coloneqq \{ a \in \R^n : a^T x \leq 1 \text{ for all } x \in X \}$,
it is well-known for a convex body $K \in \CK^n$ containing the origin in its interior that $K^\circ \in \CK^n$,
and $\jel(K) = \B^n$ if and only if $\lel(K^\circ) = \B^n$ (see Proposition~\ref{prop:john}).
Moreover, an infinite cylinder $Z$ with ellipsoidal base $E$ through the origin contains $K^\circ$ if and only if $K$ contains the polar of $E$ with respect to its \cemph{linear hull} $\lin(E)$.
Since $K^\circ \subset Z$ is equivalent to $\op{K^\circ}{\lin(E)} \subset E$, our two major problems appear to be dual -- at least at first glance.
However, the duality between the two problems is not perfect.
Only $k$-ellipsoids that contain the origin in their relative interior have an infinite cylinder with $k$-ellipsoidal base as their polar. Hence, the duality does not cover any ellipsoid
not containing the origin in its relative interior since there is no corresponding infinite cylinder that we could easily associate it with.
This gap in the duality dissolves if $K$ has to be \cemph{(centrally) symmetric}, i.e., $K-c = c-K$ for some $c \in \R^n$.
It is easy to see that in this case there always exists a volume maximal $k$-ellipsoid inscribed in $K$ with the same (symmetry) center as $K$.
Since the center of $K$ must be the origin if $\jel(K) = \B^n$,
Theorem~\ref{thm:outer} and the above described duality can be used to improve the inequality in Proposition~\ref{prop:ball_thm} for symmetric $K$.
The same bound has previously been established in \cite{Iv} for the restricted case of $k$-ellipsoids contained in sections of cubes, together with the corresponding special case of Theorem~\ref{thm:outer} for projections of regular cross-polytopes.
\begin{corollary} \label{cor:ball_sym_improv}
Let $k \in [n-1]$,
$K \in \CK^n$ symmetric with $\jel(K) = \B^n$,
and $E \subset K$ be a $k$-ellipsoid.
Then
\[
    \vol_k(E)
    \leq \vol_k \left( \sqrt{\frac{n}{k}} \, \B^k \right),
\]
with equality if $K$ is a regular cross-polytope or cube
and $E$ is chosen appropriately.
\end{corollary}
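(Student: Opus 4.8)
The plan is to derive the Corollary from Theorem~\ref{thm:outer} by polarity, making precise the sketch in the paragraph preceding the statement. First I would reduce to $k$-ellipsoids centered at the origin. Since $\jel(K)$ is the unique volume-maximal inscribed ellipsoid, for symmetric $K$ it must be centered at the symmetry center of $K$; as $\jel(K)=\B^n$, that center is the origin and hence $K=-K$. Now let $E\subset K$ be an arbitrary $k$-ellipsoid with center $m$, and write $E=m+U$ with $U=-U$ a centered $k$-ellipsoid. Then $-E=-m+U\subset-K=K$, so by convexity $U=\tfrac12\big((m+U)+(-m+U)\big)\subset\tfrac12 K+\tfrac12 K=K$, while $\vol_k(U)=\vol_k(E)$. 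Thus it suffices to prove the bound for centered $U\subset K$.

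Next I would invoke the duality. Put $F\coloneqq\lin(U)$, a linear $k$-space, and let $U^{\circ_F}$ be the polar of $U$ taken within $F$, again a centered $k$-ellipsoid. Since $U\subset F$, its polar in $\R^n$ is the infinite cylinder $U^{\circ_F}+F^\perp$, so $U\subset K$ is equivalent to $K^\circ\subset U^{\circ_F}+F^\perp$, and hence to $\op{K^\circ}{F}\subset U^{\circ_F}$. From $\jel(K)=\B^n$ we get $\lel(K^\circ)=\B^n$ by the characterization recalled above (Proposition~\ref{prop:john}), so Theorem~\ref{thm:outer}, applied to $K^\circ$, the space $F$, and the $k$-ellipsoid $U^{\circ_F}\supset\op{K^\circ}{F}$, gives $\vol_k(U^{\circ_F})\ge\vol_k\big(\sqrt{k/n}\,\B^k\big)$. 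Combining this with the elementary polarity identity $\vol_k(U)\,\vol_k(U^{\circ_F})=\vol_k(\B^k)^2$ (if $U=A\B^k$ then $U^{\circ_F}=(A^{-1})^{T}\B^k$) yields
\[
    \vol_k(U)\ \le\ \frac{\vol_k(\B^k)^2}{\vol_k\big(\sqrt{k/n}\,\B^k\big)}\ =\ \Big(\tfrac nk\Big)^{k/2}\vol_k(\B^k)\ =\ \vol_k\Big(\sqrt{\tfrac nk}\,\B^k\Big),
\]
which is the asserted inequality.

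For the equality claim, suppose $K$ is a regular cross-polytope (resp.\ cube) with $\jel(K)=\B^n$. Then $K^\circ$ is a positive dilate of a cube (resp.\ cross-polytope) with $\lel(K^\circ)=\B^n$, so up to a rotation $K^\circ$ belongs to the equality family of Theorem~\ref{thm:outer}; in particular there is a linear $k$-space $F$ with $\op{K^\circ}{F}\subset\sqrt{k/n}\,\B^k$, i.e.\ $K^\circ\subset\sqrt{k/n}\,\B^k+F^\perp$. Taking polars and using $K^{\circ\circ}=K$ gives $\sqrt{n/k}\,\B^k=\big(\sqrt{k/n}\,\B^k+F^\perp\big)^\circ\subset K$, a $k$-ball realizing $\vol_k(E)=\vol_k\big(\sqrt{n/k}\,\B^k\big)$.

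The steps above are essentially bookkeeping; the only point requiring care is the duality step — correctly identifying the polar of $U$ with the cylinder over $U^{\circ_F}$, translating $U\subset K$ into the projection containment $\op{K^\circ}{F}\subset U^{\circ_F}$, and keeping track of the reversal of inequalities under polarity — together with verifying the hypothesis $\lel(K^\circ)=\B^n$ of Theorem~\ref{thm:outer}. I do not expect a substantial obstacle beyond this.
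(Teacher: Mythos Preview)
Your proposal is correct and follows precisely the duality argument the paper sketches in the paragraph immediately preceding the corollary: reduce to an origin-centered $k$-ellipsoid using $K=-K$, pass to the polar to turn the inclusion $U\subset K$ into $\op{K^\circ}{F}\subset U^{\circ_F}$, and then apply Theorem~\ref{thm:outer} together with the identity $\vol_k(U)\,\vol_k(U^{\circ_F})=\vol_k(\B^k)^2$. The paper does not spell out these steps further, so your write-up is a faithful expansion of the intended proof.
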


The case distinction
in the upper bounds for the inner-$k$-ellipsoid problem,
depending on whether symmetry is imposed or not,
is a common phenomenon for geometric inequalities.
In many such instances, the two results can be unified by involving an \textit{asymmetry measure}.
These measures express numerically how ``(a)symmetric'' a set is,
where the precise meaning of ``(a)symmetry'' should typically be adapted to the respective context.
A first systematic definition and study of asymmetry measures has been provided in \cite{Gr}.
Usually, they are required to be continuous (with respect to the Hausdorff metric) and attain one of their extremal values precisely for ``symmetric'' convex bodies.
In many cases (especially when talking about ``point-asymmetries''), the other extremal value is attained by simplices, which reflects the intuition that simplices are the ``most asymmetric'' convex bodies in many contexts.
Involving asymmetry measures in geometric inequalities often not only unifies results, but also streamlines proofs and provides additional information about the extreme cases of the inequalities.
Whenever asymmetry measures can be well computed/approximated, they even improve practical applications of geometric inequalities (cf.~\cite{BeFr,BrKoSharp}).

The probably most well-known example of an asymmetry measure of a convex body $K$ is the \cemph{Minkowski asymmetry} $s(K)$.
It is defined as the smallest dilatation factor for a homothet of $-K$ to cover $K$.
The Minkowski asymmetry has been used frequently to strengthen geometric inequalities
(cf.~\cite{AkBaGr,BeFr,BrGo,BrGojung,BrGorRD,BrKoSharp,GLMP})
and occasionally also as a tool to obtain stability results (cf.~\cite{Boro,BDG,Sch}).
We shall use the Minkowski asymmetry for our results in Section~\ref{sec:aff_ratios}, but our results concerning the inner-$k$-ellipsoid problem are based on an asymmetry measure that is more intrinsic to that problem.

A natural choice of the asymmetry measure for the inner-$k$-ellipsoid problem is the \cemph{John asymmetry},
which is closely related to the Minkowski asymmetry.
It has been used before to, e.g., sharpen John's theorem on the approximation of convex bodies by $n$-ellipsoids (see Proposition~\ref{prop:john_rounding}).
Denoting for a body $K$ the center of $\jel(K)$ by $c_J(K)$,
the John asymmetry of $K$ is defined by
\[
    s_J(K)
    \coloneqq \inf \{ \rho \geq 0 : K - c_J(K) \subset \rho (c_J(K) - K) \}.
\]
It is known that the John asymmetry shares some basic but important properties with the Minkowski asymmetry.
For example, both only take values between $1$ and $n$.
As expected of ``point-asymmetries'', the smaller value characterizes centrally symmetric bodies and the larger value is attained for simplices.
However, unlike for the Minkowski asymmetry, simplices are not the unique maximizers of the John asymmetry (see Appendix~\ref{app:sJ}).

Using the John asymmetry, we
unify the upper bounds in Proposition~\ref{prop:ball_thm} and
Corollary~\ref{cor:ball_sym_improv} in Section~\ref{sec:inner}.
We also obtain a characterization of the ellipsoids that reach the equality case,
which is used for the proof of some implications in Section~\ref{sec:aff_ratios}.
Note that there is a relatively large degree of freedom in the choice of $K$ for which there exists an ellipsoid that reaches the upper bound.
A similar problem occurs in the setting of Theorem~\ref{thm:outer}, so one should not expect to find a meaningful characterization of all the extremal bodies $K$ for either theorem.

\begin{theorem} \label{thm:inner}
Let $k \in [n-1]$, $K \in \CK^n$ with $\jel(K) = \B^n$, and $E \subset K$ a $k$-ellipsoid. Then
\[
    \vol_k(E)
    \leq \vol_k \left( \sqrt{\frac{n}{k} \min \left\{ \frac{s_J(K)+1}{2},\frac{n+1}{k+1} \right\} } \, \B^k \right),
\]
with equality if and only if
\begin{enumerate}[(i)]
\item $E$ is a Euclidean $k$-ball
of radius $\sqrt{\frac{n}{k} \min \{ \frac{s_J(K)+1}{2},\frac{n+1}{k+1} \} }$, and
\item the center $c$ of $E$ satisfies $E \subset \{ x \in \R^n : c^T x = n (\min \{ \frac{s_J(K)+1}{2},\frac{n+1}{k+1} \} - 1) \}$.
\end{enumerate}
Moreover, the inequality is best possible 
precisely for any prescribed 
$s_J(K) \notin (1,1+\frac{2}{n})$.
\end{theorem}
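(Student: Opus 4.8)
The plan is to prove the inequality via a John-type decomposition argument and then extract the equality characterization from the rigidity of that argument. Since $\jel(K) = \B^n$, John's theorem gives contact points $u_1, \dots, u_m \in \bd K \cap \bd \B^n$ and weights $\lambda_i > 0$ with $\sum_i \lambda_i u_i u_i^T = I_n$ and $\sum_i \lambda_i u_i = 0$ (a John decomposition). First I would reduce to the case where $E$ is a Euclidean $k$-ball: after an orthogonal change of coordinates and a further argument one may assume $F = \lin\{e_1,\dots,e_k\}$ and that $E$ is inscribed in the cylinder $Z = (\op{K}{F}) + F^\perp \supset K$; replacing $E$ by the largest ball it contains only helps, so WLOG $E = c + r\,(\B^n \cap F)$ for some center $c$ and radius $r$, and $E \subset K$. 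The key inequality $\vol_k(E) \le \vol_k(\sqrt{k/n}\,\op{\B^n}{F})$-type statements come from evaluating $\sum_i \lambda_i$ against the supporting hyperplanes of $K$ at the $u_i$: each $u_i$ satisfies $u_i^T x \le 1$ on $K$, hence on $E$, which (writing $p = \op{u_i}{F}$) yields $u_i^T c + r|p| \le 1$. Squaring, summing with weights $\lambda_i$, and using $\sum \lambda_i u_i u_i^T = I$ (so $\sum \lambda_i |p_i|^2 = k$ and $\sum \lambda_i (u_i^T c)^2 = |c|^2$) together with $\sum \lambda_i = n$ and $\sum \lambda_i u_i = 0$ should produce, after Cauchy--Schwarz on the cross term, the bound $r^2 \le \frac{n}{k}\cdot\frac{n+1}{k+1}$ in general.

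To get the second, asymmetry-dependent bound $r^2 \le \frac{n}{k}\cdot\frac{s_J(K)+1}{2}$, I would instead bound the center displacement directly: since $c \in E \subset K \subset s_J(K)(-K) + (1+s_J(K))c_J(K)$ and $c_J(K) = 0$ here, every contact point $u_i$ also satisfies a lower support bound $u_i^T x \ge -s_J(K)$ on $K$, hence on $E$. Combining $-s_J(K) \le u_i^T c - r|p_i|$ with $u_i^T c + r|p_i| \le 1$ and again averaging with the weights $\lambda_i$ — now the cross term $\sum \lambda_i (u_i^T c)|p_i|$ is controlled using $\sum \lambda_i u_i^T c = 0$ — should yield $r^2 \le \frac{n}{k}\cdot\frac{1+s_J(K)}{2}$, with the min of the two bounds being the asserted one. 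The equality analysis then follows by tracking the Cauchy--Schwarz and support-inequality equalities: equality forces $u_i^T c + r|p_i| = 1$ for all $i$ in the first case (and additionally the lower bound to be tight in the second), which pins $c$ onto the hyperplane $\{c^T x = n(\mu - 1)\}$ with $\mu = \min\{\frac{s_J(K)+1}{2},\frac{n+1}{k+1}\}$, giving condition (ii); condition (i) is the reduction step being tight. I expect the bookkeeping in the equality case — especially disentangling which of the two minimands is active and showing the stated hyperplane condition is both necessary and sufficient — to be the main obstacle, along with verifying that no genuine (non-ball) ellipsoid can be extremal.

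For the final ``best possible'' clause, I would construct explicit extremal bodies for each target value $s_J(K) = s \notin (1, 1+\tfrac2n)$. For $s$ in the range $[1+\tfrac2n, n]$ the active bound is $\mu = \frac{s+1}{2}$; here I would take $K$ to be a suitable ``truncated'' simplex or a convex hull of a regular simplex with a scaled copy, designed so that $\jel(K) = \B^n$, $s_J(K) = s$, and $K$ contains a flat $k$-ball of the extremal radius lying in the required hyperplane — the family interpolating between the cross-polytope ($s=1$, handled separately as the symmetric endpoint via Corollary \ref{cor:ball_sym_improv}) and the simplex ($s=n$, where $\mu = \frac{n+1}{2} \ge \frac{n+1}{k+1}$ only when $k=1$, matching Proposition \ref{prop:ball_thm}'s equality case). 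For $s = 1$ exactly, the cross-polytope realizes it. The non-realizability for $s \in (1, 1+\tfrac2n)$ is automatic from the equality characterization: conditions (i)–(ii) with $\mu = \frac{s+1}{2} < \frac{n+1}{k+1}$ would force a configuration of contact points incompatible with $\sum \lambda_i u_i = 0$ and $\sum \lambda_i = n$ unless $s \ge 1 + \tfrac2n$, which I would verify by a short dimension/trace count on the contact points lying in the extremal hyperplane.
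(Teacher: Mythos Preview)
Your reduction ``replacing $E$ by the largest ball it contains only helps'' is backwards: the inscribed ball of $E$ has \emph{smaller} volume than $E$, so bounding the ball's radius gives no upper bound on $\vol_k(E)$. You cannot assume a priori that the extremal ellipsoid is a ball. The paper avoids this by working with a general ellipsoid with semi-axes $\alpha_1,\dots,\alpha_k$ throughout and applying the GM--AM--QM chain
\[
    \left(\prod_j \alpha_j\right)^{1/k} \le \tfrac{1}{k}\sum_j \alpha_j \le \sqrt{\tfrac{1}{k}\sum_j \alpha_j^2};
\]
the fact that $E$ must be a ball in the equality case is then a \emph{consequence} (equality in GM--AM forces all $\alpha_j$ equal), not an assumption.

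Relatedly, your derivation of the constant bound $\tfrac{n(n+1)}{k(k+1)}$ via ``Cauchy--Schwarz on the cross term'' is too optimistic. The two weighted identities you list give $\sum_j \alpha_j^2 \le n + \|c\|^2$, but to close the argument one needs a second, \emph{decreasing} bound in $\|c\|^2$. The paper obtains $(\sum_j \alpha_j)^2 \le n^2 - \|c\|^2$ via Ball's lemma (Proposition~\ref{prop:ball_lemma}), a non-trivial inequality combining Cauchy--Schwarz with Jensen and requiring a specific choice of auxiliary vectors $x^i$; this is not just bookkeeping. By contrast, your idea for the asymmetry bound is essentially correct: multiplying $r|p_i|\le 1-u_i^Tc$ and $r|p_i|\le s+u_i^Tc$ and summing with weights does yield $\sum_j \alpha_j^2 \le sn - \|c\|^2$, which combined with $\sum_j \alpha_j^2 \le n + \|c\|^2$ gives $\sum_j\alpha_j^2 \le \tfrac{n(s+1)}{2}$ directly (this is the paper's Lemma~\ref{lem:inner_asym}). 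So your two bounds are of genuinely different depth, and the first one needs the missing key lemma.

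Finally, a small slip: for the simplex $s_J=n$ one has $\tfrac{s+1}{2}=\tfrac{n+1}{2}\ge \tfrac{n+1}{k+1}$ for \emph{every} $k\ge 1$, not only $k=1$; the active minimand is always $\tfrac{n+1}{k+1}$ there. The tightness constructions also split into two regimes depending on whether $s$ lies above or below the threshold $s_{n,k}=2\tfrac{n+1}{k+1}-1$, which your sketch does not distinguish.
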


Writing $\gauge{\cdot}$ for the \cemph{Euclidean norm}, (ii) is equivalent to $\gauge{c}^2 = n (\min \{ \frac{s_J(K)+1}{2},\frac{n+1}{k+1} \} - 1)$ and,
in case $c \neq 0$, i.e., $s_J(K) \neq 1$, the line connecting the origin and $c$ being perpendicular to the \cemph{affine hull} $\aff(E)$.
In particular, the ellipsoid $E$ that can be contained in a convex body $K \in \CK^n$ with $\jel(K) = \B^n$ and $s_J(K) = s$ that reaches the upper bound is for any $s \notin (1,1+\frac{2}{n})$
unique up to rotation around the origin.

For the remaining range of asymmetries $s_J(K) \in (1,1+\frac{2}{n})$, we construct in Example~\ref{ex:inner_small_asym} a family of convex bodies that we believe to provide the extreme cases.
Together with Theorem~\ref{thm:inner}, they in particular show that the radius of the Euclidean $k$-ball in the best possible upper bound lies between $\sqrt{\frac{n}{k}}$ and $\sqrt{\frac{n+1}{k}}$.
The ratio of these values, as well as the length of the interval $(1,1+\frac{2}{n})$ where the best possible upper bound is unproven, decays with order $n^{-1}$ and in particular independently of $k$.

Although the gap in the tightness in Theorem~\ref{thm:inner} is for large $n$ rather narrow, it is in contrast comparatively large for small $n$.
Especially for $n=2$, where $1+\frac{2}{n} = n$, the inequality is tight only for the extremal John asymmetry values.
To remedy this, we devote special attention to the planar case in Section~\ref{sec:planar_diam}
and prove the strengthened inequality below,
which verifies at least for $n=2$ and $k=1$ that Example~\ref{ex:inner_small_asym} indeed provides the largest possible value.
Since $1$-ellipsoids are segments and their $1$-dimensional volume is simply the distance between their endpoints,
it is more natural to state the result in the language of diameters.
For $s \in [1,2]$ define 
\begin{equation} \label{eq:Ds}
    D_s
    \coloneqq \sqrt{s^2 + 5 + \sqrt{4 (2-s)^2 + (s^2-1)^2}}.
\end{equation}

\begin{theorem} \label{thm:planar_diam}
Let $K \in \CK^2$ with $\jel(K) = \B^2$.
Then
\[
    D(K)
    \leq D_{s_J(K)},
\]
and any $x,y \in K$ satisfy $\gauge{x-y} = D_{s_J(K)}$ if and only if
\begin{enumerate}[(i)]
\item $\gauge{x} = \gauge{y} = \sqrt{ \frac{1}{2} D_{s_J(K)}^2 - 2}$, and
\item $\gauge{\frac{x+y}{2}} = \sqrt{ \frac{1}{4} D_{s_J(K)}^2 - 2}$.
\end{enumerate}
Moreover, the inequality is best possible for any prescribed $s_J(K) \in [1,2]$.
\end{theorem}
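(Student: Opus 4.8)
The plan is to parametrize the problem by the position of the John ellipsoid's contact configuration and reduce the maximization of $D(K)$ over all $K \in \CK^2$ with $\jel(K) = \B^2$ and prescribed $s_J(K) = s$ to a finite-dimensional extremal problem. By John's theorem in the plane, $\jel(K) = \B^2$ means there exist unit contact vectors $u_1,\dots,u_m \in \bd K \cap \B^2$ ($m \le 5$ suffices, but $m=3$ will be the relevant case) with positive weights $\lambda_i$ satisfying $\sum \lambda_i u_i = 0$ and $\sum \lambda_i u_i u_i^T = I_2$; moreover $K$ is squeezed between $\B^2$ and the polytope $P = \{x : u_i^T x \le 1\}$. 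Since we want to make $D(K)$ as large as possible, the extremal $K$ should be as ``fat'' as possible in some direction, so the natural candidate is $K = P$ itself (a triangle when $m=3$), and the two diametral points $x,y$ should be vertices of $P$. So the first step is to argue that it suffices to consider $K$ a triangle circumscribed about $\B^2$ with the John conditions, and $x,y$ two of its vertices; a standard compactness/support argument (pushing $K$ out to $P$ only increases the diameter while leaving $\jel$ and $s_J$ controlled) should justify this.

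Second, I would set up coordinates: write the three contact points as $u_1 = (\cos\alpha, \sin\alpha)$ etc., or more cleverly, use the John decomposition directly. For a triangle circumscribed about $\B^2$, the John decomposition with weights $\lambda_i$ and $\sum \lambda_i = n = 2$ is rigid enough that the triangle is determined up to rotation by two parameters (say the weights, subject to $\sum\lambda_i u_i = 0$, $\sum \lambda_i u_i u_i^T = I$). The John asymmetry $s_J$ of a triangle circumscribed about its John ball is a known, explicit function of these parameters — in fact $s_J$ of such a triangle equals the Minkowski-type ratio determined by how far the vertices stick out, and one computes $\gauge{v_i}$ for the vertices $v_i$ in terms of the angles. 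The constraint $s_J(K) = s$ then cuts down the two-parameter family to a one-parameter family, and $D(K)$ becomes a function of a single variable on that curve. The formula \eqref{eq:Ds} for $D_s$ — with the nested square root $\sqrt{4(2-s)^2 + (s^2-1)^2}$ — strongly suggests that after eliminating one parameter via the $s_J = s$ constraint, one is left maximizing something like $\gauge{v_1 - v_2}^2$, a rational-trigonometric expression, and the inner square root is the discriminant of a quadratic arising from the stationarity condition (or from solving the constraint).

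Third, I would carry out this one-variable optimization: differentiate, find the critical point, and verify it is the maximum, yielding $D(K)^2 = D_s^2$. For the equality characterization, at the optimal triangle I would locate the diametral pair $x,y$ explicitly and compute $\gauge{x}$, $\gauge{y}$, and $\gauge{(x+y)/2}$, checking they match (i)–(ii); conversely, if $\gauge{x-y} = D_s$ then the above chain of inequalities (pushing to $P$, choosing vertices, the one-variable max) must all be tight, forcing $K$ and the pair $(x,y)$ into the extremal configuration, which one then reads off as (i)–(ii). The ``best possible for any $s \in [1,2]$'' clause is immediate once the extremal triangle has been exhibited for each $s$, provided one also checks it realizes $s_J = s$ (not just $s_J \le s$) — this last verification, that the constructed body has John asymmetry \emph{exactly} $s$ and that its John ellipsoid is genuinely $\B^2$ (the John decomposition conditions hold), is a point to be careful about.

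The main obstacle I anticipate is the first reduction step: it is not obvious a priori that the extremal body is a triangle rather than, say, a body with four or five contact points, nor that the diametral pair lies at vertices rather than on the circular boundary arcs. Controlling $s_J$ during the ``inflation'' of $K$ to a circumscribed polytope is delicate because $s_J$ is defined via the \emph{John} center, which can move as $K$ changes; one must ensure the inflation does not increase $s_J$ (or handle the direction of the inequality carefully). A cleaner route may be to bound $D(K)^2 = \gauge{x-y}^2 = 2(\gauge{x}^2 + \gauge{y}^2) - \gauge{x+y}^2 \cdot \tfrac{?}{}$ — more precisely use $\gauge{x-y}^2 = 2\gauge{x}^2 + 2\gauge{y}^2 - \gauge{x+y}^2$ — and then separately bound $\gauge{x}$, $\gauge{y}$ (how far a point of $K$ can be from the center of $\B^2$, using $K \subset P$ and the contact structure) and $\gauge{x+y}$ (using the John asymmetry to limit how much $K$ can bulge asymmetrically), so that the two-sided nature of $D_s$ emerges from combining an ``outer'' estimate with the $s_J$ estimate. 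That decomposition of $D^2$ into a radial part and a midpoint part is exactly what conditions (i) and (ii) encode, which makes me fairly confident it is the right framework.
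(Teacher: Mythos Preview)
Your first reduction step has a genuine gap that cannot be repaired: a triangle circumscribed about $\B^2$ in John position always satisfies $s_J = 2$ (indeed any simplex in $\R^n$ has $s_J = n$), so the triangle family carries \emph{no} information about the range $s_J(K) \in (1,2)$, which is precisely the new content of the theorem. Inflating $K$ to the contact polytope $P$ does not merely perturb $s_J$ delicately; it generically destroys the constraint altogether. Correspondingly, the extremal bodies for $s \in (1,2)$ are not triangles but the hexagon-like bodies of Example~\ref{ex:inner_small_asym}, namely $\conv(\B^2 \cup \{c \pm \rho v, -\tfrac{1}{s}(c \pm \rho v)\})$.

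The paper's proof avoids any such reduction. It fixes $x,y \in K$ realizing the diameter with $\xi = \gauge{x} \geq \gauge{y}$ and splits into two cases. If $\xi \leq \xi^* \coloneqq \sqrt{D_s^2/2 - 2}$, the universal bound $x^T y \geq -2$ from Proposition~\ref{prop:john_vector_prop} already gives $\gauge{x-y}^2 \leq 2\xi^2 + 4 \leq D_s^2$; this is where the equality characterization (i)--(ii) comes from. If $\xi > \xi^*$, the paper argues geometrically: since $-x/s_J(K) \in K$, a whole arc of $\bd(\B^2)$ lies in $\inte(K)$ and cannot contain contact points, and combining this with $x^T y \geq -2$ forces some contact vector $u^i$ onto a short arc, which in turn traps $y$ in a region where $\gauge{x-y} < D_s$ strictly (via Lemma~\ref{lem:technical}, which does the one-variable calculus you anticipated, but for the function $f_s(\xi) = \xi^2 + (2-s)^2/(\xi^2-s^2)$ rather than a triangle parameter). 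The asymmetry constraint $s_J(K) = s$ enters through Proposition~\ref{prop:john_rounding}, which caps $\xi \leq \sqrt{2s}$ and is essential for the second case to close. Your alternative ``radial plus midpoint'' decomposition is morally the right instinct for the first case, but you would still need the entire second-case argument, and that is where the real work lies.
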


For an application of our results,
we stay with $1$-ellipsoids in Section~\ref{sec:aff_ratios}
and return to problems of directly comparing radial functionals along the lines of Jung, Steinhagen, Bohnenblust, and Leichtweiss.
We consider the (to the best of our knowledge) new variants of bounding the width-inradius- and diameter-circumradius-ratios
when the first is maximized and the second minimized over all affine transformations of a convex body.
Since these problems do not intrinsically depend on Euclidean distances, we also consider them in general Minkowski spaces.
An upper bound for the first and a lower bound for the second ratio can be derived from \cite[Corollary~6.4]{BrKoSharp}.
Based on our results above, we obtain sharp bounds in the respective other directions, both in the Euclidean and the general case.
Lastly, we show how these results are connected to the extreme cases for some well-known bounds on the Banach-Mazur/Grünbaum distance to the Euclidean ball and, more generally, arbitrary convex bodies.

In closing the first section, let us collect some more notation used throughout.
For a set $X \subset \R^n$, we denote by $\conv(X)$
its \cemph{convex hull}.
We write $\bd(X)$ and $\inte(X)$ for the \cemph{boundary} and \cemph{interior} of $X$.
The \cemph{support function} of $X$ is given by $h_X \colon \R^n \to \R, h_X(a) \coloneqq \max \{ a^T x : x \in X \}$.
For $a \in \R^n \setminus \{0\}$ and $\beta \in \R$, we define the \cemph{halfspace}
$H_{(a,\beta)}^\leq \coloneqq \{ x \in \R^n : a^T x \leq \beta \}$.
The \cemph{orthogonal complement} of a linear subspace $F \subset \R^n$ is $F^\perp$.
We denote the closed \cemph{segment} connecting $x,y \in \R^n$ by $[x,y]$, where we replace the brackets with parentheses if we wish to exclude the respective endpoints from the segment.

Let us also state the well-known characterization of Loewner and John ellipsoids obtained by John in \cite{Jo},
which provides the basis for our results throughout the paper.
We write $I_n$ for the \cemph{identity matrix} in $\R^{n \times n}$.

\begin{proposition} \label{prop:john}
A convex body $K \in \CK^n$ satisfies $\jel(K) = \B^n$ (resp.~$\lel(K) = \B^n$) if and only if
\begin{enumerate}[(i)]
\item $\B^n \subset K$ (resp.~$K \subset \B^n$) and
\item there exist $u^1, \ldots, u^m \in \bd(K) \cap \bd(\B^n)$ as well as $\lambda_1, \ldots, \lambda_m > 0$ such that
\[
    \sum_{i \in [m]} \lambda_i u^i = 0
    \quad \text{and} \quad
    \sum_{i \in [m]} \lambda_i u^i (u^i)^T = I_n.
\]
\end{enumerate}
\end{proposition}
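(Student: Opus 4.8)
\textbf{Proof strategy for Theorem~\ref{thm:planar_diam}.}

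The plan is to work entirely in the setting $\jel(K)=\B^2$, using the John decomposition from Proposition~\ref{prop:john} as the central tool. Write $s = s_J(K)$. First I would reduce the problem to a two-variable optimization: suppose $x,y\in K$ realize the diameter $D(K)=\|x-y\|$. Since $\B^2\subset K$, the segment $[x,y]$ together with $\B^2$ constrains the geometry. The key observation is that $\|x\|^2 + \|y\|^2$ and $\langle x,y\rangle$ (equivalently $\|x+y\|$ and $\|x-y\|$) are the natural coordinates, and the diameter is $\|x-y\|^2 = \|x\|^2 + \|y\|^2 - 2\langle x,y\rangle$. I would first bound $\|x\|$ and $\|y\|$: the contact points $u^i$ of the John decomposition lie on $\bd(\B^2)\cap\bd(K)$, and each defines a supporting halfspace $H^{\le}_{(u^i,1)}$ containing $K$, hence containing $x$ and $y$. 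Thus $\langle u^i, x\rangle \le 1$ and $\langle u^i, y\rangle \le 1$ for every contact point. Combined with the decomposition identities $\sum\lambda_i u^i = 0$ and $\sum\lambda_i u^i(u^i)^T = I_2$ (so $\sum\lambda_i = \operatorname{tr} I_2 = 2$), averaging $\langle u^i, x\rangle^2$ against the weights $\lambda_i$ gives $\sum\lambda_i \langle u^i,x\rangle^2 = x^T(\sum\lambda_i u^i(u^i)^T)x = \|x\|^2$, and similarly for $y$ and for the mixed term $\sum\lambda_i\langle u^i,x\rangle\langle u^i,y\rangle = \langle x,y\rangle$.

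Next I would bring in the asymmetry constraint. By definition of $s_J$, we have $K \subset s(-K)$ when $c_J(K)=0$, so for each contact point $u^i$, the point $-s u^i$... more precisely, $-\frac{1}{s}u^i$ is a boundary point of $K$ in the direction $-u^i$, and every point of $K$ satisfies $\langle u^i, \cdot\rangle \ge -s$ (since $-K \subset s^{-1}K$ forces the reverse supporting inequality scaled by $s$). Hence $-s \le \langle u^i, x\rangle \le 1$ and $-s \le \langle u^i, y\rangle \le 1$ for all $i$. Now the optimization becomes: maximize $\|x\|^2 + \|y\|^2 - 2\langle x,y\rangle$ subject to the existence of weights $\lambda_i\ge 0$ with $\sum\lambda_i = 2$, contact directions $u^i\in\bd(\B^2)$ with $\sum\lambda_i u^i = 0$ and $\sum\lambda_i u^i(u^i)^T = I_2$, and values $t_i := \langle u^i,x\rangle$, $r_i := \langle u^i,y\rangle$ in $[-s,1]$ with $\|x\|^2 = \sum\lambda_i t_i^2$, etc. A Lagrangian/extreme-point analysis (or a direct two-point John decomposition, since in the plane one expects at most three contact directions spanning the relevant configuration — in fact two suffice once one exploits the symmetry of the extremal configuration) should reduce this to optimizing over the angle between two contact directions and the position of $x,y$ relative to them, yielding the explicit value $D_s$ in \eqref{eq:Ds}. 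I would verify the algebra by checking that the claimed equality conditions (i) $\|x\|=\|y\| = \sqrt{\frac12 D_s^2 - 2}$ and (ii) $\|\frac{x+y}{2}\| = \sqrt{\frac14 D_s^2 - 2}$ are exactly the stationarity conditions, and that these are consistent (the identity $\|x\|^2+\|y\|^2 = 2\|\frac{x+y}{2}\|^2 + \frac12\|x-y\|^2$ should make (i) and (ii) compatible precisely when $\|x-y\|^2 = D_s^2$).

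For the equality characterization and the "best possible" claim, I would construct the extremal bodies explicitly. Given target $s\in[1,2]$, one builds a polygon $K$ containing $\B^2$ with John decomposition supported on a small set of contact directions placed so that $s_J(K)=s$, and containing two points $x,y$ at distance $D_s$ satisfying (i)–(ii); the family should deform continuously and the case $s=1$ (symmetric) should recover a rectangle-type body while $s=2$ recovers a triangle, matching $D_1 = \sqrt{8}$ and $D_2 = \sqrt{9+\sqrt{9}}$ or whatever \eqref{eq:Ds} gives at the endpoints. This connects to Example~\ref{ex:inner_small_asym} as promised in the introduction. \textbf{The main obstacle} I anticipate is the case analysis in the optimization step: a priori the John decomposition of an extremal $K$ could involve several contact directions, and showing that the maximum of $\|x-y\|$ is attained by a configuration with only two (or few) contact directions — and then solving the resulting trigonometric optimization in closed form to obtain precisely the nested-radical expression $D_s$ — is the delicate technical heart. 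The equality conditions (i)–(ii) should serve as a useful guide: they pin down $x,y$ on two concentric circles, and one then only has to check which contact configurations are compatible with a valid John decomposition and the asymmetry value $s$.
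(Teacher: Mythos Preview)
Your proposal does not address the stated target. The statement you were asked to prove is Proposition~\ref{prop:john}, John's characterization of $\jel(K)=\B^n$ (resp.\ $\lel(K)=\B^n$) via contact points and weights. The paper does not supply a proof of this proposition at all; it is quoted as a classical result from \cite{Jo} and used as a black box throughout. What you have written is instead an outline for Theorem~\ref{thm:planar_diam}, which is an entirely different statement.

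Even viewed as a strategy for Theorem~\ref{thm:planar_diam}, your outline diverges substantially from the paper's argument and has a genuine gap. The paper does begin, as you do, with the identities $\sum\lambda_i\langle u^i,x\rangle\langle u^i,y\rangle=\langle x,y\rangle$ etc.\ (packaged as Proposition~\ref{prop:john_vector_prop}), and this immediately gives $\|x-y\|^2\le 2\max\{\|x\|^2,\|y\|^2\}+4$. But the crux is what happens when $\|x\|$ exceeds the critical value $\xi_s^*$: your plan to ``reduce to two contact directions via a Lagrangian/extreme-point analysis'' is where the real difficulty hides, and you have not identified the mechanism. A planar John decomposition cannot be supported on only two directions unless they are antipodal, so the extremal configuration necessarily involves at least three contact points; the paper handles the large-$\|x\|$ regime by a delicate geometric argument (tangent lines from $x$ and $-x/s$ to $\B^2$, forcing some $u^i$ onto a specific short arc, then bounding the location of $y$ via Lemma~\ref{lem:technical} and Proposition~\ref{prop:john_rounding}) rather than by any variational reduction. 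Your sketch gives no indication of how the nested radical in $D_s$ would emerge from the optimization you describe, and the hope that ``two suffice once one exploits the symmetry of the extremal configuration'' is not correct as stated.
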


Obviously, (ii) also implies $\sum_{i \in [m]} \lambda_i = \mathrm{trace}(I_n) = n$.
We say that Euclidean unit vectors and weights satisfying the conditions in (ii) above form a \cemph{John decomposition} (for $K$)
and that a convex body $K$ is in \cemph{John position} (resp.~\cemph{Loewner position}) if $\jel(K) = \B^n$ (resp.~$\lel(K) = \B^n$).

\section{Minimal Ellipsoids Containing Projections}
\label{sec:outer}

In this section we prove Theorem~\ref{thm:outer}.
We begin with the claimed inequality
and afterwards show its tightness
by providing a family of convex bodies as described in the theorem.

\begin{lemma} \label{lem:outer}
Let $u^1, \ldots, u^m \in \bd(\B^n)$ and $\lambda_1, \ldots, \lambda_m > 0$ form a John decomposition.
If $F \subset \R^n$ a linear $k$-space and $E \subset F$ is a $k$-ellipsoid with $\op{u^i}{F} \in E$ for all $i \in [m]$, then
\[
    \vol_k(E)
    \geq \vol_k \left( \sqrt{\frac{k}{n}} \, \B^k \right),
\]
with equality if and only if
$E = \sqrt{\frac{k}{n}} \left( \op{\B^n}{F} \right)$.
\end{lemma}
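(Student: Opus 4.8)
The plan is to exploit the John decomposition identity $\sum_i \lambda_i u^i (u^i)^T = I_n$ by projecting it onto $F$. Let $P_F$ denote the orthogonal projection matrix onto $F$, viewed as a map $\R^n \to F \cong \R^k$. Then $P_F u^i = \op{u^i}{F}$, and multiplying the identity on both sides by $P_F$ yields
\[
    \sum_{i \in [m]} \lambda_i (\op{u^i}{F}) (\op{u^i}{F})^T = P_F I_n P_F^T = I_k,
\]
where $I_k$ is the identity on $F$. Taking traces (and using $\sum_i \lambda_i \gauge{\op{u^i}{F}}^2 = \operatorname{trace}(I_k) = k$) already encodes that the projected points, on average, have squared length $k/n$ relative to the total mass $\sum_i \lambda_i = n$. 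This is the arithmetic heart of the bound.

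Next I would reduce to the case where $E$ is centered at the origin of $F$. Write $E = c + A \B^k$ for some $c \in F$ and a positive definite $A \in \R^{k \times k}$ (identifying $F$ with $\R^k$); the condition $\op{u^i}{F} \in E$ means $\gauge{A^{-1}(\op{u^i}{F} - c)} \le 1$ for all $i$. Squaring, summing with weights $\lambda_i$, and expanding $\gauge{A^{-1}(\op{u^i}{F} - c)}^2 = \op{u^i}{F}^T A^{-2} \op{u^i}{F} - 2 c^T A^{-2} \op{u^i}{F} + c^T A^{-2} c$ gives, after using the projected identity $\sum_i \lambda_i \op{u^i}{F}(\op{u^i}{F})^T = I_k$ and $\sum_i \lambda_i \op{u^i}{F} = P_F \sum_i \lambda_i u^i = 0$,
\[
    \sum_{i \in [m]} \lambda_i \gauge{A^{-1}(\op{u^i}{F} - c)}^2 = \operatorname{trace}(A^{-2}) + n\, c^T A^{-2} c \le \sum_{i \in [m]} \lambda_i = n.
\]
Dropping the nonnegative term $n\, c^T A^{-2} c$, I get $\operatorname{trace}(A^{-2}) \le n$; this is the key inequality, and equality forces $c = 0$.

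Finally, I would bound $\vol_k(E)$ from below using this trace bound. Since $\vol_k(E) = \det(A) \vol_k(\B^k)$ and the eigenvalues $a_1, \ldots, a_k$ of $A$ satisfy $\sum_j a_j^{-2} \le n$, the AM–GM inequality gives $\left( \prod_j a_j^{-2} \right)^{1/k} \le \frac{1}{k}\sum_j a_j^{-2} \le \frac{n}{k}$, hence $\det(A)^{2} = \prod_j a_j^{2} \ge (k/n)^{k}$, i.e., $\det(A) \ge (k/n)^{k/2}$ and therefore $\vol_k(E) \ge (k/n)^{k/2}\vol_k(\B^k) = \vol_k(\sqrt{k/n}\,\B^k)$. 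For the equality characterization, I need $c = 0$, $\sum_j a_j^{-2} = n$, and equality in AM–GM, which forces $a_1 = \cdots = a_k = \sqrt{k/n}$, so $A = \sqrt{k/n}\, I_k$ and $E = \sqrt{k/n}\,\B^k$ as a subset of $F$. It remains to observe that $\sqrt{k/n}\,\B^k = \sqrt{k/n}(\op{\B^n}{F})$ since $\op{\B^n}{F} = \B^k$ (the unit ball of $F$); conversely this $E$ does contain every $\op{u^i}{F}$ because $\gauge{\op{u^i}{F}} \le 1$. The main obstacle is purely bookkeeping: being careful that the projected John identity $\sum_i \lambda_i \op{u^i}{F}(\op{u^i}{F})^T = I_k$ holds as an identity of operators on $F$ (not just a trace relation) and tracking the cross term so that $c = 0$ is genuinely forced in the equality case.
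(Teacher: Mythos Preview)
Your argument is correct and is essentially the same as the paper's: both weight the ellipsoid-membership inequalities by the $\lambda_i$, use the John identities to simplify, and finish with AM--GM; the paper works in an orthonormal basis of $F$ with scalars $\mu_j$, which correspond to the eigenvalues of your $A^{-2}$ up to the factor $k/n$, so your key inequality $\operatorname{trace}(A^{-2})\le n$ is exactly the paper's $\sum_j \mu_j \le k$.

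One small slip at the very end: the claim that the extremal ball $\sqrt{k/n}\,(\op{\B^n}{F})$ \emph{always} contains every $\op{u^i}{F}$ ``because $\gauge{\op{u^i}{F}}\le 1$'' is false in general (the ball has radius $\sqrt{k/n}<1$, and e.g.\ a standard basis vector $u^i\in F$ projects to a unit vector). Fortunately this is not needed: the ``if'' direction of the equality characterization is immediate from the volume formula, under the standing hypothesis that $E$ already contains all the projections.
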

\begin{proof}
Let $v^1, \ldots, v^k \in F$ be orthonormal vectors,
$c \in F$,
and $\mu_1, \ldots, \mu_k > 0$ such that
\[
    E
    = \left\{ x \in F : \sum_{j \in [k]} \mu_j ((x-c)^T v^j)^2 \leq \frac{k}{n} \right\}.
\]
It is well-known that
\[
    \vol_k(E)
    = \sqrt{ \prod_{j \in [k]} \frac{1}{\mu_j} } \, \vol_k \left( \sqrt{\frac{k}{n}} \, \B^k \right),
\]
so the claimed inequality is equivalent to
\[
    \prod_{j \in [k]} \mu_j
    \leq 1.
\]

We write $p^i \coloneqq \op{u^i}{F}$ for all $i \in [m]$.
Since $u^i = p^i + (\op{u^i}{F^\perp})$,
we have for all $j \in [k]$ that $(p^i)^T v^j = (u^i)^T v^j$.
Consequently, the assumption $p^i \in E$ implies
\[
    \sum_{j \in [k]} \mu_j ((u^i-c)^T v^j)^2
    = \sum_{j \in [k]} \mu_j ((p^i-c)^T v^j)^2
    \leq \frac{k}{n}.
\]
Since $\sum_{i \in [m]} \lambda_i = n$, it follows
\begin{align}
    k
    & \geq \sum_{i \in [m]} \lambda_i \left( \sum_{j \in [k]} \mu_j \left( (u^i-c)^T v^j \right)^2 \right) \nonumber
    \\
    & = \sum_{j \in [k]} \mu_j (v^j)^T \left( \sum_{i \in [m]} \lambda_i \left( u^i (u^i)^T - u^i c^T - c (u^i)^T + c c^T \right) \right) v^j. \label{eq:outer_ineq}
\end{align}
Because of the conditions about the $u^i$ and $\lambda_i$, the first term in the innermost brackets of \eqref{eq:outer_ineq} sums to $I_n$ and the second and third vanish.
Thus, we obtain
\begin{equation} \label{eq:outer_center_ineq}
    k \geq \sum_{j \in [k]} \mu_j + n \sum_{j \in [k]} \mu_j (c^T v^j)^2 \geq \sum_{j \in [k]} \mu_j.
\end{equation}
Hence, by the inequality between the geometric mean and arithmetic mean, it follows
\[
    \prod_{j \in [k]} \mu_j \leq \left( \sum_{j \in [k]} \frac{\mu_j}{k} \right)^k \leq 1,
\]
with equality if and only if $\mu_j = 1$ for all $j \in [k]$. In the equality case we also have equality from left to right in \eqref{eq:outer_center_ineq},
which implies $c \in \lin(\{ v^1, \ldots, v^k \})^\perp = F^\perp$. Since $c \in F$, this means $c = 0$. Altogether, the claimed equality condition follows.
\end{proof}

By Proposition~\ref{prop:john}, any convex body $K \in \CK^n$ with $\lel(K) = \B^n$ contains a set of vectors as in the above lemma,
so the inequality in Theorem~\ref{thm:outer} and its equality case are established by Lemma~\ref{lem:outer}.
For the rest of this section, we turn to its tightness.
Our starting point is the following proposition taken from \cite{Br}.

\begin{proposition} \label{prop:k-rad}
Let $k \in [n]$ and $P \in \CK^n$ be a regular cross-polytope, cube, or regular simplex. Then
\[
    \frac{R_k(P)}{R(P)} = \begin{cases}
        \frac{n+1}{n \sqrt{n+2}}, & \text{if $P$ is a simplex, $n$ even, $k=1$}, \\
        \frac{2n-1}{2n}, & \text{if $P$ is a simplex, $n$ even, $k=n-1$}, \\
        \sqrt{\frac{k}{n}}, & \text{else}.
    \end{cases}
\]
\end{proposition}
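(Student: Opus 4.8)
The plan is to treat the three polytope types separately, since for each we have explicit coordinates and can compute $R_k$ and $R$ directly. Recall that $R_k(P)$ is the smallest radius of a Euclidean $k$-ball containing the projection $\op{P}{F}$ for some optimal linear $k$-space $F$, and $R(P) = R_n(P)$ is the ordinary circumradius. For the cube $P = [-1,1]^n$ and the regular cross-polytope $P = \conv\{\pm e^1,\dots,\pm e^n\}$, both sets are centrally symmetric, so the circumcenters are at the origin and the optimal projections can be taken centered at the origin as well; this reduces $R_k$ to computing $\max\{\gauge{\op{x}{F}} : x \in P\}$ and then minimizing over $k$-spaces $F$. For the cube, projecting onto the coordinate span of any $k$ axes already gives a $k$-cube of circumradius $\sqrt{k}$, while $R(P) = \sqrt{n}$, so $R_k/R \le \sqrt{k/n}$; the matching lower bound follows from a John-decomposition/averaging argument exactly as in Lemma~\ref{lem:outer} applied to $P^\circ$ (whose Loewner ellipsoid is a ball), or more elementarily by noting any containing $k$-ball must have squared radius at least the average of $\gauge{\op{u^i}{F}}^2$ over the standard basis directions of $P^\circ$. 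The cross-polytope case is dual: its vertices $\pm e^i$ project to $\pm \op{e^i}{F}$, whose norms $\gauge{\op{e^i}{F}}$ satisfy $\sum_i \gauge{\op{e^i}{F}}^2 = k$, so the best case is when all these norms equal $\sqrt{k/n}$, achieved by a suitably "spread out" $k$-space; here $R(P) = 1$, giving $R_k/R = \sqrt{k/n}$ again.

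The genuinely delicate case is the regular simplex $P$, because it is not centrally symmetric and the optimal projection need not be centered at the circumcenter. Place the $n+1$ vertices of $P$ as the vectors $w^0,\dots,w^n$ on a sphere of radius $1$ about the origin with $\sum_i w^i = 0$ and $\sum_i w^i(w^i)^T = \frac{n+1}{n} I_n$ (a John/Loewner decomposition for $P$ up to scaling); then $R(P) = 1$. For a $k$-space $F$, the projection $\op{P}{F} = \conv\{\op{w^0}{F},\dots,\op{w^n}{F}\}$ is an $n$-simplex's projection, and one must find the smallest enclosing ball of these $n+1$ points in $F$. The strategy is: (a) show that for "most" choices of $k$ and parity, the enclosing ball can still be centered at the origin with squared radius $\max_i \gauge{\op{w^i}{F}}^2$, minimized to $k/n$ by choosing $F$ so that $\gauge{\op{w^i}{F}}^2 = k/n$ for all $i$ — this requires such an $F$ to exist, which is a linear-algebra feasibility question about finding a $k \times (n+1)$ configuration with prescribed column norms and the right row orthonormality, solvable when $n$ is odd or $k \notin \{1,n-1\}$; (b) in the two exceptional cases ($n$ even, $k \in \{1,n-1\}$) show that no such equi-norm $F$ exists, so the smallest enclosing ball is forced off-center, and compute the resulting optimum. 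For $k=1$ this is the minimal-width direction of the simplex: the projection onto a line is a segment, and minimizing its length over all lines gives the classical value $w(P)/2 = R_1(P)$; plugging in the regular simplex's known width yields $\frac{n+1}{n\sqrt{n+2}}$ (the extra factor reflecting that the midpoint of the shortest segment is not the projected circumcenter when $n$ is even). For $k=n-1$ one projects onto a hyperplane, i.e. forgets one direction; the optimal direction to forget is along an altitude of the simplex, and a direct computation of the circumradius of the resulting $(n-1)$-dimensional point configuration gives $\frac{2n-1}{2n}$.

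I expect step (b) — and within it, establishing the parity obstruction and then correctly identifying and computing the off-center optimum for $n$ even — to be the main obstacle. The feasibility in (a) can likely be handled by an explicit construction (e.g. using a discrete Fourier / roots-of-unity type configuration, which exists precisely under the stated parity and range conditions on $k$) together with a dimension count; the subtlety is pinning down exactly when equi-norm projections fail, which is where the cases "$n$ even and $k \in \{1,n-1\}$" come from. For the computations in those cases, the cleanest route is probably to use the explicit vertex coordinates $w^i = \sqrt{\frac{n+1}{n}} \, e^i - \frac{1}{\sqrt{n}} \, \mathbf{1}$ (suitably normalized) inside the hyperplane $\sum_j x_j = \text{const}$, so that projections and enclosing balls become concrete finite-dimensional optimizations; one then verifies that the KKT/Chebyshev-center conditions for the smallest enclosing ball single out the claimed radius. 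Since the proposition is quoted from \cite{Br}, in the paper itself this is simply cited, but the above indicates how each of the three branches is obtained.
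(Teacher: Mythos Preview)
The paper does not prove this proposition; it is taken verbatim from \cite{Br}, as you correctly note at the end. So there is no proof in the paper to compare against, and your outline goes beyond what the paper does. The cube and cross-polytope cases in your sketch are fine: the lower bound $R_k/R \geq \sqrt{k/n}$ is exactly Lemma~\ref{lem:outer}, and the explicit coordinate projections give the matching upper bound.

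For the simplex there is a concrete error. Your claim that for $k=n-1$ with $n$ even ``the optimal direction to forget is along an altitude of the simplex'' is false. Projecting orthogonally to a vertex direction $w^0$ sends $w^0$ to the origin and each remaining vertex $w^i$ to $w^i + \tfrac{1}{n}w^0$, a point at distance $\sqrt{1-1/n^2}$ from the origin; these $n$ projected points form a regular $(n-1)$-simplex centred at the origin, so the circumradius of the full projection is $\sqrt{(n^2-1)/n^2}$. For $n=4$ this is $\sqrt{15}/4 \approx 0.968$, not the claimed $7/8$; already for $n=2$ it gives $\sqrt{3}/2$, not $3/4$. The optimal hyperplane direction for even $n$ is not along any of the obvious symmetry axes (vertex, edge, or face-centroid directions), and identifying it together with verifying the value $\tfrac{2n-1}{2n}$ is precisely the delicate optimization carried out in \cite{Br}. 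Similarly, your step~(a) --- the existence of an equi-norm $k$-space for the simplex vertices exactly when $n$ is odd or $k\notin\{1,n-1\}$ --- is the technical heart of that reference; the roots-of-unity hint points in a plausible direction but is far from a proof, and the matching non-existence (the parity obstruction) is not addressed at all.
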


This proposition already shows that the inequality in Theorem~\ref{thm:outer} is best possible for any $n$ and $k$.
Surprisingly, in almost all cases, the inequality is tight both for some symmetric bodies as well as simplices.
As outlined in the introduction, these are the extreme cases for many reasonable asymmetry measures.
The example below shows that, unless $n$ is even and $k \in \{1,n-1\}$, the inequality is tight also for all intermediate asymmetry values for any asymmetry measure that is additionally continuous with respect to the Hausdorff distance on $\CK^n$.

\begin{example}
Let $k \in [n]$. Then there exists a function $K \colon [0,1] \to \CK^n$ that is continuous with respect to the Hausdorff distance
such that $K(0)$ is a cross-polytope,
$K(1)$ is a parallelotope,
and for every $t \in [0,1]$ we have $\lel(K(t)) = \B^n$ and $R_k(K(t)) = \sqrt{\frac{k}{n}}$.
If $n$ is odd or $k \notin \{1,n-1\}$, the function can be continuously extended to the interval $[0,2]$ such that the latter two properties
remain true for all $t \in [0,2]$ and $K(2)$ is a simplex.
\end{example}
\begin{proof}
Let $P_1, P_\infty, T \in \CK^n$ be a regular cross-polytope, a cube, and a regular simplex, respectively, such that the Loewner ellipsoid of all three polytopes
is $\B^n$. We may additionally assume that there exists a single linear $k$-space $F \subset \R^n$
such that the outer $k$-radius of all three polytopes is attained for their projections onto $F$. For any $t \in [0,2]$ we define
\[
    K(t) \coloneqq \begin{cases}
        \conv( P_1 \cup (2t P_\infty)), & \text{if $t \in [0,0.5]$},\\
        \conv( (2(1-t) P_1) \cup P_\infty), & \text{if $t \in [0.5,1]$}, \\
        \conv( P_\infty \cup (t-1) T), & \text{if $t \in [1,1.5]$, and} \\
        \conv( (2 (2-t) P_\infty) \cup T), & \text{if $t \in [1.5,2]$}.
    \end{cases}
\]
It is easy to see from the above construction that $K(t)$ is continuous in $t$ with $K(0) = P_1$, $K(1) = P_\infty$, and $K(2) = T$. 
Moreover, by the choice of $P_1$, $P_\infty$, and $T$, all $K(t)$ are subsets of $\B^n$ and contain at least one of the three explicitly given polytopes entirely. 
Thus, $\lel(K(t)) = \B^n$ for all $t \in [0,2]$. 

Lastly, we obtain from $\lel(K(t)) = \B^n$ and Lemma~\ref{lem:outer} that $R_k(K(t)) \geq \sqrt{\frac{k}{n}}$ for all $t \in [0,2]$.
For the reverse inequality, we first observe for any $A,B \subset \R^n$ that $\op{\conv(A \cup B)}{F} = \conv((\op{A}{F}) \cup (\op{B}{F}))$.
Now, Proposition~\ref{prop:k-rad} together with the equality case in Lemma~\ref{lem:outer} shows that
$\op{P_i}{F} \subset B \coloneqq \sqrt{\frac{k}{n}} \, (\op{\B^n}{F})$ for $i \in \{1,\infty\}$.
Consequently, $\op{K(t)}{F} \subset B$ for all $t \in [0,1]$ and therefore $R_k(K(t)) \leq \sqrt{\frac{k}{n}}$ for all $t \in [0,1]$.
If $n$ is odd or $k \notin \{1,n-1\}$, Proposition~\ref{prop:k-rad} additionally shows $\op{T}{F} \subset B$, which implies that $\op{K(t)}{F} \subset B$ and
$R_k(K(t)) \leq \sqrt{\frac{k}{n}}$ keeps true for all $t \in [1,2]$ in these cases as well.
\end{proof}

\section{Maximal Ellipsoids Contained in Sections}
\label{sec:inner}

The proof of the inequality in Theorem~\ref{thm:inner} reuses large parts of the original proof of Proposition~\ref{prop:ball_thm} from \cite{Ball}.
We therefore first reproduce the main steps of that proof and in the process establish the claimed equality conditions for the original constant upper bound.
Afterwards, we adjust certain parts of the proof to allow us to involve the John asymmetry for the new upper bound and again derive necessary conditions for the equality case.
Once these parts are complete, we turn to the tightness in Theorem~\ref{thm:inner}
and the initially mentioned examples for the range of John asymmetries $s_J(K) \in (1,1+\frac{2}{n})$.

To prove Theorem~\ref{thm:inner}, we require the following two propositions.
The first one gives the maximum of a linear functional evaluated over a general $k$-ellipsoid.
The right-to-left inequality below has essentially been verified in \cite{Ball} by considering the point identified as maximizer in the first case below.
Proving the other direction is a simple application of the Cauchy-Schwarz inequality.
We include it here to help with transparency why precisely these terms appear during the proof of Theorem~\ref{thm:inner}.

\begin{proposition} \label{prop:ellip_lin_opt} 
Let $v^1, \ldots, v^k \in \R^n$ be orthonormal,
$\alpha_1, ..., \alpha_k > 0$, and $b, c \in \R^n$.
For the $k$-ellipsoid 
$E \coloneqq \{ x \in c + \lin(\{v^1, \ldots, v^k\}) :
\sum_{j \in [k]} \frac{((x-c)^T v^j)^2}{\alpha_j^2} \leq 1\}$,
we have
\[
    h_E(b)
    = b^T c + \sqrt{\sum_{j \in [k]} \alpha_j^{2} (b^T v^j)^2}.
\]
The maximum inner product with $b$ is attained by
only $c + (\sum_{j \in [k]} \alpha_j^2 (b^T v^j)^2)^{-\nicefrac{1}{2}} \sum_{j \in [k]} \alpha_j^2 (b^T v^j) v^j$ if $b \notin \lin(\{v^1, \ldots, v^k\})^\perp$, or
every point in $E$, otherwise.
\end{proposition}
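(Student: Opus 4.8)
The plan is to parametrize $E$ explicitly and thereby reduce the computation of $h_E(b)$ to maximizing a linear function over a solid ellipsoid in $\R^k$, which the Cauchy–Schwarz inequality resolves together with its equality case. Write $F \coloneqq \lin(\{v^1, \ldots, v^k\})$. Since $v^1, \ldots, v^k$ are orthonormal and $E \subset c + F$, every $x \in E$ is uniquely of the form $x = c + \sum_{j \in [k]} t_j v^j$ with $(t_1, \ldots, t_k) \in \R^k$ subject to $\sum_{j \in [k]} t_j^2/\alpha_j^2 \leq 1$, and then $b^T x = b^T c + \sum_{j \in [k]} t_j (b^T v^j)$. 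Hence $h_E(b) = b^T c + M$, where $M$ is the maximum of $(t_j)_j \mapsto \sum_{j \in [k]} t_j (b^T v^j)$ over that solid ellipsoid.

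To bound $M$, I would apply Cauchy–Schwarz in the form
\[
    \sum_{j \in [k]} t_j (b^T v^j)
    = \sum_{j \in [k]} \frac{t_j}{\alpha_j} \cdot \alpha_j (b^T v^j)
    \leq \left( \sum_{j \in [k]} \frac{t_j^2}{\alpha_j^2} \right)^{1/2} \left( \sum_{j \in [k]} \alpha_j^2 (b^T v^j)^2 \right)^{1/2}
    \leq \left( \sum_{j \in [k]} \alpha_j^2 (b^T v^j)^2 \right)^{1/2},
\]
giving the claimed $h_E(b) \leq b^T c + \sqrt{\sum_{j \in [k]} \alpha_j^2 (b^T v^j)^2}$. (For the reverse inequality one could alternatively just evaluate $b^T x$ at the point named in the statement, as in \cite{Ball}; but tracking the equality conditions in the chain above simultaneously yields the bound, its attainment, and the description of the maximizers.) Equality in the Cauchy–Schwarz step forces $(t_j/\alpha_j)_j$ to be a nonnegative scalar multiple of $(\alpha_j (b^T v^j))_j$, i.e.\ $t_j = \kappa \alpha_j^2 (b^T v^j)$ for some $\kappa \geq 0$; equality in the second step forces $\sum_{j \in [k]} t_j^2/\alpha_j^2 = 1$ provided $\sum_{j \in [k]} \alpha_j^2 (b^T v^j)^2 \neq 0$. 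Combining the two relations yields $\kappa = \left( \sum_{j \in [k]} \alpha_j^2 (b^T v^j)^2 \right)^{-1/2}$, and substituting back into $x = c + \sum_{j \in [k]} t_j v^j$ produces precisely the maximizer from the statement.

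It then remains to separate the two regimes. If $b \notin F^\perp$, then $\sum_{j \in [k]} \alpha_j^2 (b^T v^j)^2 > 0$, so $\kappa$ above is uniquely pinned down, $\kappa > 0$, and the maximizer is unique. If $b \in F^\perp$, then $b^T v^j = 0$ for all $j$, so $b^T x = b^T c$ identically on $E$, the square-root term vanishes, and every point of $E$ attains the maximum — which is exactly the ``otherwise'' clause. I do not anticipate any genuine obstacle here; the only point demanding a little care is that the displayed chain contains two distinct equality conditions (the Cauchy–Schwarz step and the ``$\leq 1$'' step), and both must be invoked to characterize the maximizers correctly and to obtain uniqueness in the non-degenerate regime.
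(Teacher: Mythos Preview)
Your proposal is correct and follows essentially the same approach the paper indicates: the paper notes that the upper bound is a simple application of the Cauchy--Schwarz inequality and that the lower bound follows by evaluating at the claimed maximizer, which is exactly what your parametrization and equality-case analysis accomplish in one pass.
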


The second proposition is taken from \cite[Lemma~5]{Ball}.
It is a key tool in the original proof of Proposition~\ref{prop:ball_thm},
so it is natural that establishing the equality conditions for the constant upper bound in Theorem~\ref{thm:inner} requires an analysis of the equality case in the result below.
For the convenience of the reader, we reproduce the proof of \cite[Lemma~5]{Ball} as first part of the proof below.

\begin{proposition} \label{prop:ball_lemma}
Let $x^1, \ldots, x^m \in \R^n$ satisfy $\sum_{i \in [m]} x^i = 0$ and let $u^1, \ldots, u^m \in \bd(\B^n)$.
Then
\[ 
    \left( \sum_{i \in [m]} (x^i)^T u^i \right)^2 \leq \sum_{i,\ell \in [m]} \gauge{x^i} \gauge{x^\ell} (1 - (u^i)^T u^\ell) =: \gamma,
\]
with equality if and only if for $\xi \coloneqq \sum_{i \in [m]} \gauge{x^i}$ and $u \in \R^n$ such that
$\xi u = \sum_{i \in [m]} \gauge{x^i} u^i$,
there exists $\sigma \in \{-1,1\}$ such that
$x^i = 0$ or $u^i - u = \sigma \frac{\sqrt{\gamma}}{\xi} \frac{x^i}{\gauge{x^i}}$ for all $i \in [m]$.
\end{proposition}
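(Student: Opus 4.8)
The plan is to follow Ball's original argument for the inequality and then read off the equality condition from it. The starting point is that $\sum_i x^i = 0$ lets us replace each $u^i$ by $u^i - u$ for any fixed $u$: since $\sum_i (x^i)^T u = 0$, we have $\sum_i (x^i)^T u^i = \sum_i (x^i)^T (u^i - u)$. Choosing $u$ as in the statement, so that $\xi u = \sum_i \gauge{x^i} u^i$ with $\xi = \sum_i \gauge{x^i}$ (that is, $u$ is the $\gauge{x^i}$-weighted barycentre of the $u^i$), is what makes the subsequent bookkeeping collapse cleanly.

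Next I would apply the Cauchy--Schwarz inequality twice. Writing $w^i \coloneqq x^i/\gauge{x^i}$ for $x^i \neq 0$, one first bounds
\[
    \left( \sum_i \gauge{x^i}\, (w^i)^T(u^i - u) \right)^2
    \leq \xi \sum_i \gauge{x^i}\, \big( (w^i)^T(u^i-u) \big)^2 ,
\]
and then, using $\gauge{w^i} = 1$, bounds $\big( (w^i)^T(u^i-u) \big)^2 \leq \gauge{u^i - u}^2$ term by term. Since $\gauge{x^i} w^i = x^i$, this gives $\big( \sum_i (x^i)^T u^i \big)^2 \leq \xi \sum_i \gauge{x^i}\, \gauge{u^i-u}^2$. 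Expanding $\gauge{u^i-u}^2 = 1 - 2(u^i)^T u + \gauge{u}^2$ and using $\sum_i \gauge{x^i} u^i = \xi u$, the right-hand side collapses to $\xi^2(1 - \gauge{u}^2)$; the same computation shows $\gamma = \xi^2 - \gauge{\xi u}^2 = \xi^2(1-\gauge{u}^2)$, so the two sides agree and the inequality follows. Here one also records that $\gauge{u} \leq 1$ by the triangle inequality, so $\gamma \geq 0$ and $\sqrt{\gamma}$ is well defined (the case where all $x^i$ vanish being trivial).

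For the equality characterization I would trace back through the two Cauchy--Schwarz steps. Equality in the first forces $(w^i)^T(u^i-u)$ to equal a single constant $\lambda$ for all $i$ with $x^i \neq 0$; equality in the second, applied for each such $i$, forces $u^i - u$ to be a scalar multiple of $w^i$, hence $u^i - u = \lambda\, x^i/\gauge{x^i}$. Substituting this back into $\sum_i \gauge{x^i}\, \gauge{u^i-u}^2 = \gamma/\xi$ yields $\lambda^2 \xi = \gamma/\xi$, i.e.\ $\lambda = \sigma \sqrt{\gamma}/\xi$ for a common sign $\sigma \in \{-1,1\}$, while $\sum_i \gauge{x^i}(u^i-u) = \xi u - \xi u = 0$ is automatic from $\sum_i x^i = 0$ and imposes nothing further. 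Conversely, if $x^i = 0$ or $u^i - u = \sigma \frac{\sqrt{\gamma}}{\xi} \frac{x^i}{\gauge{x^i}}$ for every $i$, then $\sum_i (x^i)^T u^i = \sum_i (x^i)^T(u^i-u) = \sigma \frac{\sqrt{\gamma}}{\xi} \sum_i \gauge{x^i} = \sigma \sqrt{\gamma}$, so equality holds.

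I do not expect a genuine obstacle: the inequality is Ball's, and the only delicate point is the equality analysis. There one must be careful that the indices $i$ with $x^i = 0$ drop out of every sum weighted by $\gauge{x^i}$, so the Cauchy--Schwarz equality conditions impose nothing on the corresponding $u^i$ — which is exactly why the statement allows the alternative ``$x^i = 0$'' — and one must observe that the sign $\sigma$ is common to all $i$ because it comes from the single constant $\lambda$ produced by the first Cauchy--Schwarz step.
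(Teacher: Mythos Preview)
Your proposal is correct and follows essentially the same approach as the paper: both subtract the weighted barycentre $u$, then bound $\sum_i (x^i)^T(u^i-u)$ via Cauchy--Schwarz/Jensen and expand $\sum_i \gauge{x^i}\,\gauge{u^i-u}^2$ to $\xi(1-\gauge{u}^2)=\gamma/\xi$. The only cosmetic difference is the ordering of the estimates: the paper uses triangle inequality, then per-term Cauchy--Schwarz, then Jensen, whereas you apply the weighted Cauchy--Schwarz first and the per-term bound second; the equality analyses (a common sign plus a common constant $\rho$ versus a single common constant $\lambda$) are equivalent reformulations of the same conditions.
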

\begin{proof}
W.l.o.g.~we may assume $\xi > 0$.
We define $\lambda_i \coloneqq \frac{\gauge{x^i}}{\xi} \geq 0$ for every $i \in [m]$ such that $\sum_{i \in [m]} \lambda_i = 1$.
Since $\sum_{i \in [m]} x^i = 0$, it follows from the Cauchy-Schwarz inequality and Jensen's inequality that
\begin{align*}
    \left( \sum_{i \in [m]} \frac{(x^i)}{\xi}^T u^i \right)^2
    & = \left\vert \sum_{i \in [m]} \frac{(x^i)}{\xi}^T (u^i - u) \right\vert^2
    \leq \left( \sum_{i \in [m]} \left\vert \frac{(x^i)}{\xi}^T (u^i - u) \right\vert \right)^2
    \leq \left( \sum_{i \in [m]} \lambda_i \gauge{u^i - u} \right)^2
    \\
    & \leq \sum_{i \in [m]} \lambda_i \gauge{u^i - u}^2
    = \sum_{i \in [m]} \lambda_i \gauge{u^i}^2 + \sum_{i \in [m]} \lambda_i \gauge{u}^2 - \sum_{i \in [m]} \lambda_i 2 u^T u^i
    \\
    & = 1 - u^T u
    = \sum_{i,\ell \in [m]} \lambda_i \lambda_\ell (1 - (u^i)^T u^\ell)
    = \sum_{i,\ell \in [m]} \frac{\gauge{x^i} \gauge{x^\ell}}{\xi^2} (1 - (u^i)^T u^\ell).
\end{align*}

Let us now consider the equality case.
By the first line of estimations, there must exist some $\sigma \in \{-1,1\}$ such that
$(x^i)^T (u^i - u) = \sigma \gauge{x^i} \gauge{u^i - u}$ for all $i \in [m]$,
which is fulfilled if and only if $x^i = 0$ or $u^i - u$ is a non-negative multiple of $\sigma x^i$ for all $i \in [m]$.
Additionally, equality in the application of Jensen's inequality holds if and only if there exists some $\rho \geq 0$ with $\gauge{u^i - u} = \rho$ whenever $\lambda_i \neq 0$.
Both equality conditions taken together are equivalent to the claimed equality condition,
up to the precise value of $\rho$.
To this end, we observe that the above computations show in the equality case that
\[
    \rho^2
    = \sum_{i \in [m]} \lambda_i \gauge{u^i - u}^2
    = \sum_{i,\ell \in [m]} \frac{\gauge{x^i} \gauge{x^\ell}}{\xi^2} (1 - (u^i)^T u^\ell)
    = \frac{\gamma}{\xi^2},
\]
so the complete claimed equality condition follows.
\end{proof}

We now turn to the equality case of the inequality in Proposition~\ref{prop:ball_thm},
for which we reproduce the relevant steps from the proof of \cite[Proposition~4]{Ball} at the beginning of the following proof.
The version of the proof provided here involves Proposition~\ref{prop:ellip_lin_opt} more explicitly to clarify some intermediate steps
and streamlines the approach with regard to the changes necessary for the upcoming Lemma~\ref{lem:inner_asym}.

\begin{lemma} \label{lem:Ball_eq}
Let $u^1, \ldots, u^m \in \bd(\B^n)$ and $\lambda_1, \ldots, \lambda_m > 0$ form a John decomposition.
If $E \subset \R^n$ is a $k$-ellipsoid contained in $P \coloneqq \bigcap_{i \in [m]} H_{(u^i,1)}^\leq$ and
\[
    vol_k(E)
    = vol_k \left( \sqrt{ \frac{n (n+1)}{k (k+1)} } \, \B^k \right),
\]
then the following conditions are satisfied:
\begin{enumerate}[(i)]
\item $E$ is a Euclidean $k$-ball of radius $\sqrt{\frac{n (n+1)}{k (k+1)}}$, and
\item the center $c$ of $E$ satisfies $c^T x = \frac{n (n-k)}{k+1}$ for all $x \in \aff(E)$
and $c^T u^i \in \{ 1, \frac{k - n}{k+1} \}$ for all $i \in [m]$.
\end{enumerate}
\end{lemma}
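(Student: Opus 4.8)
The plan is to run Ball's proof of Proposition~\ref{prop:ball_thm} step by step while, at every estimate, recording what equality forces; since $\vol_k(E)$ is assumed to attain the bound, every inequality in that chain becomes an equality, and collecting the resulting conditions yields~(i) and~(ii).

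Concretely, I would first fix the center $c$ of $E$, an orthonormal basis $v^1,\dots,v^k$ of the $k$-dimensional linear space $F$ parallel to $\aff(E)$, and the semi-axis lengths $\alpha_1,\dots,\alpha_k>0$, so that $\vol_k(E)=\prod_{j\in[k]}\alpha_j\cdot\vol_k(\B^k)$ and, by Proposition~\ref{prop:ellip_lin_opt}, the containment $E\subset P$ is equivalent to $(u^i)^Tc+\sqrt{\sum_{j\in[k]}\alpha_j^2((u^i)^Tv^j)^2}=h_E(u^i)\le 1$ for all $i\in[m]$; Proposition~\ref{prop:ellip_lin_opt} also records which point of $E$ realizes $h_E(u^i)$, which I would retain because for an extremal $E$ that point must lie on the supporting hyperplane of $P$ with normal $u^i$ (the ``otherwise'' case $u^i\perp F$ being the exception). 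Then I would follow Ball's argument: square these constraints, multiply by $\lambda_i$, sum over $i$, and combine the John-decomposition identities $\sum_i\lambda_iu^i=0$ and $\sum_i\lambda_iu^i(u^i)^T=I_n$ (so $\sum_i\lambda_i=n$) with Proposition~\ref{prop:ball_lemma}, applied to suitable vectors built from the quantities $\sum_{j\in[k]}\alpha_j((u^i)^Tv^j)v^j$ and the scalars $1-(u^i)^Tc$; the argument closes with the inequality between the arithmetic and geometric means of $\alpha_1^2,\dots,\alpha_k^2$ and produces $\prod_j\alpha_j^2\le\left(\frac{n(n+1)}{k(k+1)}\right)^k$. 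Equality forces $\sum_j\alpha_j^2=\frac{n(n+1)}{k+1}$ together with $\alpha_1^2=\dots=\alpha_k^2$, hence $\alpha_1=\dots=\alpha_k=\sqrt{\frac{n(n+1)}{k(k+1)}}$, which is~(i).

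For~(ii) I would first use the equality conditions in the statement of Proposition~\ref{prop:ball_lemma} to deduce that the scalars $1-(u^i)^Tc$ take only the two values $0$ and $\frac{n+1}{k+1}$, i.e.\ $(u^i)^Tc\in\{1,\frac{k-n}{k+1}\}$ for every $i\in[m]$. Given this dichotomy,~(ii) follows from the John-decomposition identities alone. Put $J_0\coloneqq\{i:(u^i)^Tc=1\}$ and $J_1\coloneqq\{i:(u^i)^Tc=\frac{k-n}{k+1}\}$. For $i\in J_0$ the constraint gives $\sqrt{\sum_j\alpha_j^2((u^i)^Tv^j)^2}\le 1-(u^i)^Tc=0$, so (as $\alpha_j>0$) $u^i\perp F$; applying $I_n=\sum_i\lambda_iu^i(u^i)^T$ to $c$ and using $\sum_i\lambda_iu^i=0$ then yields $c=\frac{n+1}{k+1}\sum_{i\in J_0}\lambda_iu^i$, whence $c\perp F$. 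Moreover $\sum_i\lambda_i(u^i)^Tc=0$ determines $\sum_{i\in J_0}\lambda_i$ and $\sum_{i\in J_1}\lambda_i$, and substituting these into $\|c\|^2=\sum_i\lambda_i((u^i)^Tc)^2$ gives $\|c\|^2=\frac{n(n-k)}{k+1}$. Since $c\perp F$ together with $\|c\|^2=\frac{n(n-k)}{k+1}$ is equivalent to $c^Tx=\frac{n(n-k)}{k+1}$ for all $x\in\aff(E)=c+F$, this establishes~(ii).

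The main obstacle is the bookkeeping of equality cases: Ball's estimate chains two Cauchy--Schwarz inequalities and a Jensen inequality (inside Proposition~\ref{prop:ball_lemma}) with a final arithmetic--geometric-mean step and the ``tracing through'' of the John identities, and each of these must be shown to be an equality and then translated back through the parametrization of $E$. In particular one has to handle separately the ``degenerate'' indices with $u^i\perp F$, for which Proposition~\ref{prop:ellip_lin_opt} provides no single contact point and $E$ instead lies in the whole supporting hyperplane, and to pin down that the only admissible values of $(u^i)^Tc$ are $1$ and $\frac{k-n}{k+1}$. This is the inner-ellipsoid counterpart of the role played by \eqref{eq:outer_center_ineq} in the equality analysis of Lemma~\ref{lem:outer}, where an analogous term involving the center is forced to vanish.
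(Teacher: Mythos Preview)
Your plan is essentially the paper's: reproduce Ball's proof and track every equality case, using Proposition~\ref{prop:ellip_lin_opt} for the containment constraints, Proposition~\ref{prop:ball_lemma} for the key estimate, and the John identities throughout. The one wrinkle is that your order in~(ii) is slightly circular: the equality condition of Proposition~\ref{prop:ball_lemma} (with $u=-c/n$, $\xi=n$, and $\rho$ determined by~(i)) only yields $c^Tu^i\in\{1,\beta\}$ with $\beta=\tfrac{n}{2}(\rho^2-1)-\tfrac{\|c\|^2}{2n}$, so you cannot read off $\beta=\tfrac{k-n}{k+1}$ before knowing $\|c\|^2$. The paper simply extracts $\|c\|^2=\tfrac{n(n-k)}{k+1}$ first, from equality in both bounds $\sum_j\alpha_j^2= n+\|c\|^2$ and $(\sum_j\alpha_j)^2= n^2-\|c\|^2$, and then computes $\beta$; once you reorder this step, your derivation of $c\perp F$ via $c=\tfrac{n+1}{k+1}\sum_{i\in J_0}\lambda_iu^i$ is a clean alternative to the paper's direct verification that $c^Tv^j=0$.
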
 
\begin{proof}
Let $v^1, \ldots, v^k \in \R^n$ be orthonormal vectors, $c \in \R^n$, and $\alpha_1, \ldots, \alpha_k > 0$ such that
\[
    E = \left\{x  \in c + \lin(\{v^1, \ldots, v^k\}) :
        \sum_{j \in [k]} \frac{ ((x-c)^T v^j)^2 }{ \alpha_j^2 } \leq 1 \right\}.
\]
By the inequality between the geometric mean, arithmetic mean, and quadratic mean, we have
\begin{equation} \label{eq:GM_AM_QM_ineq}
    \left( \frac{\vol_k(E)}{\vol_k(\B^k)} \right)^{\nicefrac{1}{k}}
    = \prod_{j \in [k]} \alpha_j^{\nicefrac{1}{k}}
    \leq \sum_{j \in [k]} \frac{\alpha_j}{k}
    \leq \sqrt{ \sum_{j \in [k]} \frac{\alpha_j^2}{k} }.
\end{equation}
For $\kappa \in \{1,2\}$, we can rewrite
\begin{equation} \label{eq:aj_square_sum}
    \sum_{j \in [k]} \alpha_j^\kappa
    = \sum_{j \in [k]} \alpha_j^\kappa (v^j)^T v^j
    = \sum_{j \in [k]} \alpha_j^\kappa (v^j)^T \left( \sum_{i \in [m]} \lambda_i u^i (u^i)^T \right) v^j
    = \sum_{i \in [m]} \lambda_i \sum_{j \in [k]} \alpha_j^\kappa ((u^i)^T v^j)^2,
\end{equation}
which allows us to use the expressions that appear in Proposition~\ref{prop:ellip_lin_opt}.

On the one hand, we upper bound the quadratic mean of the $\alpha_j$ using Proposition~\ref{prop:ellip_lin_opt}.
Since $E \subset P$, we have
\begin{equation} \label{eq:first_QM_ineq}
    c^T u^i + \sqrt{ \sum_{j \in [k]} \alpha_j^2 ((u^i)^T v^j)^2 }
    = h_E(u^i)
    \leq 1
\end{equation}
for all $i \in [m]$.
It follows from \eqref{eq:aj_square_sum} with $\kappa = 2$ that
\begin{align}
\begin{split} \label{eq:inner_cnorm_ineq}
    \sum_{j \in [k]} \alpha_j^2
    & = \sum_{i \in [m]} \lambda_i \sum_{j \in [k]} \alpha_j^2 ((u^i)^T v^j)^2
    \leq \sum_{i \in [m]} \lambda_i (1 - c^T u^i) (1 - c^T u^i)
    \\
    & = \sum_{i \in [m]} \lambda_i - 2 c^T \sum_{i \in [m]} \lambda_i u^i + c^T \left( \sum_{i \in [m]} \lambda_i u^i (u^i)^T \right) c
    = n + \gauge{c}^2.
\end{split}
\end{align}

On the other hand, we upper bound the arithmetic mean of the $\alpha_j$ by applying Proposition~\ref{prop:ball_lemma} to the vectors
$x^i \coloneqq \lambda_i \sum_{j \in [k]} \alpha_j ((u^i)^T v^j) v^j$, $i \in [m]$, and the $u^i$.
We observe that
\[
    \sum_{i \in [m]} x^i
    = \sum_{j \in [k]} \alpha_j \left( \left( \sum_{i \in [m]} \lambda_i u^i \right)^T v^j \right) v^j
    = 0,
\]
as well as $(x^i)^T u^i = \lambda_i \sum_{j \in [k]} \alpha_j ((u^i)^T v^j)^2$ for $i \in [m]$.
We additionally have from \eqref{eq:first_QM_ineq} that
$\gauge{x^i} = \lambda_i \sqrt{ \sum_{j \in [k]} \alpha_j^2 ((u^i)^T v^j)^2 } \leq \lambda_i (1 - c^T u^i)$.
Now, \eqref{eq:aj_square_sum} with $\kappa = 1$ and Proposition~\ref{prop:ball_lemma} show
\begin{align}
\begin{split} \label{eq:aj_sum_square}
    \left( \sum_{j \in [k]} \alpha_j \right)^2
    & = \left( \sum_{i \in [m]} \lambda_i \sum_{j \in [k]} \alpha_j ((u^i)^T v^j)^2 \right)^2
    = \left( \sum_{i \in [m]} (x^i)^T u^i \right)^2
    \\
    & \leq \sum_{i,\ell \in [m]} \gauge{x^i} \gauge{x^\ell} (1 - (u^i)^T u^\ell)
    \leq \sum_{i,\ell \in [m]} \lambda_i \lambda_\ell (1 - c^T u^i) (1 - (u^i)^T u^\ell) (1 - c^T u^\ell).
\end{split}
\end{align}
Expanding the product in the last sum and using the properties of the $u^i$ gives
\begin{equation} \label{eq:inner_general_neq_cnorm_ineq}
    \left( \sum_{j \in [k]} \alpha_j \right)^2
    \leq n^2 - \gauge{c}^2.
\end{equation}

From \eqref{eq:GM_AM_QM_ineq}, \eqref{eq:inner_cnorm_ineq},
and \eqref{eq:inner_general_neq_cnorm_ineq} we obtain that
\begin{equation} \label{eq:inner_general_final_step}
    \left( \frac{\vol_k(E)}{\vol_k(\B^k)} \right)^{\nicefrac{1}{k}}
    \leq \sqrt{ \min \left\{ \frac{n + \gauge{c}^2}{k}, \frac{n^2 - \gauge{c}^2}{k^2} \right\} }
    \leq \sqrt{ \frac{n (n+1)}{k (k+1)} },
\end{equation}
where the right-hand inequality is obtained by considering which term in the minimum is smaller.
This concludes the reproduction of the original proof of Proposition~\ref{prop:ball_thm} from \cite{Ball}.
Since the lemma assumes equality from left to right in \eqref{eq:inner_general_final_step},
we need equality in all intermediate steps above.

First, equality in \eqref{eq:GM_AM_QM_ineq}~and~\eqref{eq:inner_general_final_step} implies
$\alpha_j = \sqrt{ \frac{n (n+1)}{k (k+1)} }$
for all $j \in [k]$, which already proves (i).
Next, equality in \eqref{eq:inner_cnorm_ineq}, and therefore in \eqref{eq:first_QM_ineq} for all $i \in [m]$, gives
$\gauge{x^i} = \lambda_i \sqrt{ \sum_{j \in [k]} \alpha_j^2 ((u^i)^T v^j)^2 } = \lambda_i (1 - c^T u^i)$.
Now, we obtain
\[
    \xi
    \coloneqq \sum_{i \in [m]} \gauge{x^i}
    = \sum_{i \in [m]} \lambda_i (1 - c^T u^i)
    = n
    > 0
\]
as well as
\[
    u
    \coloneqq \sum_{i \in [m]} \frac{\gauge{x^i}}{\xi} u^i
    = \sum_{i \in [m]} \frac{\lambda_i (1 - c^T u^i)}{n} u^i
    = - \frac{1}{n} \sum_{i \in [m]} \lambda_i u^i (u^i)^T c
    = - \frac{1}{n} c.
\]
Equality in \eqref{eq:aj_sum_square} further shows
\[
    \rho
    \coloneqq \frac{\sqrt{ \sum_{i,\ell \in [m]} \gauge{x^i} \gauge{x^\ell} (1 - (u^i)^T u^\ell) }}{\xi}
    = \frac{1}{n} \sum_{j \in [k]} \alpha_j
    = \sqrt{ \frac{k (n+1)}{n (k+1)} }.
\]
By the equality case in Proposition~\ref{prop:ball_lemma},
equality in \eqref{eq:aj_sum_square} gives for each $i \in [m]$ that either $x^i = 0$ and thus $c^T u^i = 1$ by $\gauge{x^i} = \lambda_i (1 - c^T u^i)$,
or $u^i - u = \pm \rho \frac{x^i}{\gauge{x^i}}$ and thus
\[
    \frac{k (n+1)}{n (k+1)}
    = \rho^2
    = \gauge{u^i - u}^2
    = \gauge{u^i}^2 + \gauge{u}^2 - 2 u^T u^i
    = 1 + \frac{\gauge{c}^2}{n^2} + \frac{2}{n} c^T u^i.
\]
Equality in the right-hand inequality in
\eqref{eq:inner_general_final_step} yields
$\gauge{c}^2 = \frac{n (n-k)}{k+1}$.
Thus, the above equality from left to right can be simplified to
\[
    c^T u^i
    = \frac{k (n+1)}{2 (k+1)} - \frac{n}{2} - \frac{n-k}{2 (k+1)} 
    = \frac{k-n}{k+1}.
\]
This verifies the claim about the possible values for $c^T u^i$ in (ii).
Finally, we have for all $j \in [k]$
\[
    c^T v^j
    = \sum_{i \in [m]} \lambda_i (c^T u^i) ((u^i)^T v^j)
    = \frac{k - n}{k+1} \sum_{i \in [m]} \lambda_i (u^i)^T v^j
    = 0,
\]
where we use for each $i \in [m]$ that either
$c^T u^i = \frac{k - n}{k+1}$
or $0 = \gauge{x^i} = \lambda_i \sqrt{ \sum_{j \in [k]} \alpha_j^2 ((u^i)^T v^j)^2 }$
and consequently $u^i \in \lin(\{ v^1, \ldots, v^k \})^\perp$.
It follows $c^T x = \gauge{c}^2 = \frac{n(n-k)}{k+1}$ for all $x \in \aff(E) = c + \lin(\{v^1, \ldots, v^k\})$
and in summary (ii).
\end{proof}

Next, we prove the new upper bound in Theorem~\ref{thm:inner} that depends on the John asymmetry and obtain its equality conditions.
As initially discussed, the proof below reuses large parts of the original proof of Proposition~\ref{prop:ball_thm} in \cite{Ball}.
The main difference is a new inequality that replaces \eqref{eq:inner_general_neq_cnorm_ineq}.
It is again decreasing in $\gauge{c}^2$, but allows us to involve the John asymmetry instead of leading to a constant bound.

\begin{lemma} \label{lem:inner_asym}
Let $u^1, \ldots, u^m \in \bd(\B^n)$ and $\lambda_1, \ldots, \lambda_m > 0$ form a John decomposition.
If $E \subset \R^n$ is a $k$-ellipsoid contained in
$P \coloneqq \bigcap_{i \in [m]} H_{(u^i,1)}^\leq$
such that $E \subset -s P$ for some $s > 0$,
then
\[
    vol_k(E)
    \leq vol_k \left( \sqrt{\frac{n (s+1)}{2 k}} \, \B^k \right).
\]
Moreover, equality implies that the following conditions are satisfied:
\begin{enumerate}[(i)]
\item $E$ is a Euclidean $k$-ball of radius $\sqrt{\frac{n (s+1)}{2 k}}$,
\item the center $c$ of $E$ satisfies $c^T x = \frac{n (s-1)}{2}$ for all $x \in \aff(E)$ and $c^T u^i \in \{1, \frac{1-s}{2}\}$ for all $i \in [m]$, and
\item $s \notin (1,1+\frac{2}{n})$.
\end{enumerate}
\end{lemma}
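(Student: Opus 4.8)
The plan is to follow the proof of Lemma~\ref{lem:Ball_eq} almost verbatim, changing only the ``decreasing in $\gauge{c}^2$'' estimate \eqref{eq:inner_general_neq_cnorm_ineq}. So I would take the same parametrization $E=\{x\in c+\lin(\{v^1,\dots,v^k\}):\sum_{j\in[k]}((x-c)^Tv^j)^2/\alpha_j^2\le1\}$ with orthonormal $v^j$ and semi-axes $\alpha_j>0$, keep the identity \eqref{eq:aj_square_sum}, the support-function bound $h_E(u^i)=c^Tu^i+\sqrt{\sum_{j\in[k]}\alpha_j^2((u^i)^Tv^j)^2}\le1$ from \eqref{eq:first_QM_ineq} coming from $E\subset P$, and the resulting estimate \eqref{eq:inner_cnorm_ineq}, namely $\sum_{j\in[k]}\alpha_j^2\le n+\gauge{c}^2$. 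The one new input is the hypothesis $E\subset -sP$: since $x\in -sP$ is equivalent to $(u^i)^Tx\ge -s$ for all $i\in[m]$, Proposition~\ref{prop:ellip_lin_opt} applied to $-u^i$ yields $h_E(-u^i)=-c^Tu^i+\sqrt{\sum_{j\in[k]}\alpha_j^2((u^i)^Tv^j)^2}\le s$, hence $\sqrt{\sum_{j\in[k]}\alpha_j^2((u^i)^Tv^j)^2}\le s+c^Tu^i$ for every $i$.

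Abbreviating $q_i\coloneqq\sqrt{\sum_{j\in[k]}\alpha_j^2((u^i)^Tv^j)^2}$, I would then multiply the two bounds $0\le q_i\le 1-c^Tu^i$ and $0\le q_i\le s+c^Tu^i$ to get $q_i^2\le(1-c^Tu^i)(s+c^Tu^i)$. Since \eqref{eq:aj_square_sum} for $\kappa=2$ is exactly $\sum_{j\in[k]}\alpha_j^2=\sum_{i\in[m]}\lambda_iq_i^2$, summing and using $\sum_i\lambda_iu^i=0$, $\sum_i\lambda_iu^i(u^i)^T=I_n$ and $\sum_i\lambda_i=n$ gives the new, $s$-dependent inequality replacing \eqref{eq:inner_general_neq_cnorm_ineq},
\[
    \sum_{j\in[k]}\alpha_j^2\;\le\;\sum_{i\in[m]}\lambda_i(1-c^Tu^i)(s+c^Tu^i)\;=\;ns-\gauge{c}^2,
\]
which is again decreasing in $\gauge{c}^2$. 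Combining it with $\sum_{j\in[k]}\alpha_j^2\le n+\gauge{c}^2$ and $\min\{a,b\}\le\tfrac12(a+b)$ gives $\sum_{j\in[k]}\alpha_j^2\le\tfrac{n(s+1)}{2}$, and the geometric--quadratic mean bound contained in \eqref{eq:GM_AM_QM_ineq} turns this into $\vol_k(E)=(\prod_{j\in[k]}\alpha_j)\vol_k(\B^k)\le\vol_k(\sqrt{\frac{n(s+1)}{2k}}\,\B^k)$. Note that, unlike in Lemma~\ref{lem:Ball_eq}, neither the arithmetic-mean estimate nor Proposition~\ref{prop:ball_lemma} is needed.

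For the equality conditions I would trace this chain backwards. Equality forces equality in the geometric--quadratic mean step, hence $\alpha_1=\dots=\alpha_k$, and since $\sum_{j\in[k]}\alpha_j^2=\tfrac{n(s+1)}{2}$ must then be attained, the bound $\min\{n+\gauge{c}^2,ns-\gauge{c}^2\}\le\tfrac{n(s+1)}{2}$ is tight, which is possible only if $n+\gauge{c}^2=ns-\gauge{c}^2$, i.e.\ $\gauge{c}^2=\tfrac{n(s-1)}{2}$, and then $\alpha_j^2=\tfrac{n(s+1)}{2k}$ — this is (i). Both bounds on $\sum\alpha_j^2$ being tight gives termwise $q_i=1-c^Tu^i$ as well as $q_i^2=(1-c^Tu^i)(s+c^Tu^i)$, so $(1-c^Tu^i)(1-s-2c^Tu^i)=0$ and therefore $c^Tu^i\in\{1,\tfrac{1-s}{2}\}$ for every $i$. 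Exactly as in Lemma~\ref{lem:Ball_eq}, $c^Tu^i=1$ forces $q_i=0$ and hence $(u^i)^Tv^j=0$ for all $j$; substituting $c=\sum_i\lambda_i(c^Tu^i)u^i$ into $c^Tv^j$ and using $\sum_i\lambda_iu^i=0$ then yields $c^Tv^j=0$ for all $j$, so $c^Tx=\gauge{c}^2=\tfrac{n(s-1)}{2}$ for every $x\in\aff(E)=c+\lin(\{v^1,\dots,v^k\})$, which is (ii).

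The step with no counterpart in Lemma~\ref{lem:Ball_eq}, which I expect to be the real point, is (iii). From $\gauge{c}^2=\tfrac{n(s-1)}{2}\ge0$ we get $s\ge1$, and if $s=1$ there is nothing to show. If $s>1$, then not all $c^Tu^i$ can equal $\tfrac{1-s}{2}$, because $\sum_i\lambda_ic^Tu^i=0$ while $\tfrac{1-s}{2}\sum_i\lambda_i=\tfrac{n(1-s)}{2}\ne0$; so $c^Tu^{i_0}=1$ for some $i_0$, and Cauchy--Schwarz together with $\gauge{u^{i_0}}=1$ gives $1=c^Tu^{i_0}\le\gauge{c}$, whence $\tfrac{n(s-1)}{2}=\gauge{c}^2\ge1$ and $s\ge1+\tfrac2n$. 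Thus $s\notin(1,1+\tfrac2n)$. The pitfall to avoid is discarding the $\gauge{c}^2$-dependence too early: replacing $\sqrt{(1-c^Tu^i)(s+c^Tu^i)}$ by the crude bound $\tfrac{s+1}{2}$ (or routing the estimate through Proposition~\ref{prop:ball_lemma} and dropping a squared norm) would only produce a $k$-ball of radius $\tfrac{n(s+1)}{2k}$ instead of $\sqrt{\frac{n(s+1)}{2k}}$, which is too weak; keeping $q_i^2$ under the summation is exactly what makes the constant sharp.
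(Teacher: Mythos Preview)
Your proposal is correct and follows essentially the same route as the paper's proof: both derive the new bound $\sum_j \alpha_j^2 \le \sum_i \lambda_i (1-c^T u^i)(s+c^T u^i) = sn - \gauge{c}^2$ from the pair of support-function inequalities and combine it with \eqref{eq:inner_cnorm_ineq} exactly as you describe. The only cosmetic difference is in (iii), where you argue the contrapositive (if $s>1$ then $\sum_i \lambda_i c^T u^i = 0$ forces some $c^T u^{i_0}=1$, whence $\gauge{c}\ge 1$), while the paper assumes $s<1+\tfrac{2}{n}$ directly and uses that the $u^i$ cannot all lie in a common hyperplane to conclude $c=0$; both arguments are equally short and valid.
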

\begin{proof}
Let $v^1, \ldots, v^k \in \R^n$ be orthonormal vectors, $c \in \R^n$, and $\alpha_1, \ldots, \alpha_k > 0$ such that
\[
    E = \left\{x  \in c + \lin(\{v^1, \ldots, v^k\}) :
        \sum_{j \in [k]} \frac{ ((x-c)^T v^j)^2 }{ \alpha_j^2 } \leq 1 \right\}.
\]
Since $E \subset -s P$,
Proposition~\ref{prop:ellip_lin_opt} implies
\begin{equation} \label{eq:second_QM_ineq}
    - c^T u^i + \sqrt{ \sum_{j \in [k]} \alpha_j^2 ((u^i)^T v^j)^2 }
    = h_E(-u^i)
    \leq s.
\end{equation}
Similar to \eqref{eq:inner_cnorm_ineq}, we now obtain from \eqref{eq:aj_square_sum} with $\kappa = 2$, \eqref{eq:first_QM_ineq}, and \eqref{eq:second_QM_ineq} that
\begin{align}
\begin{split} \label{eq:inner_asym_neg_cnorm_ineq}
    \sum_{j \in [k]} \alpha_j^2
    & = \sum_{i \in [m]} \lambda_i \sum_{j \in [k]} \alpha_j^2 ((u^i)^T v^j)^2
    \leq \sum_{i \in [m]} \lambda_i (1 - c^T u^i) (s + c^T u^i)
    \\
    & = s \sum_{i \in [m]} \lambda_i + (1-s) c^T \sum_{i \in [m]} \lambda_i u^i - c^T \sum_{i \in [m]} \lambda_i u^i (u^i)^T c
    = s n - \gauge{c}^2.
\end{split}
\end{align}
We conclude from \eqref{eq:GM_AM_QM_ineq}, \eqref{eq:first_QM_ineq}, and \eqref{eq:inner_asym_neg_cnorm_ineq} that
\begin{equation} \label{eq:inner_asym_final_step}
    \left( \frac{\vol_k(E)}{\vol_k(\B^k)} \right)^{\nicefrac{1}{k}}
    \leq \sqrt{ \min \left\{ \frac{n + \gauge{c}^2}{k}, \frac{s n - \gauge{c}^2}{k} \right\} }
    \leq \sqrt{ \frac{n (s+1)}{2 k} },
\end{equation}
where the right-hand inequality is obtained by considering which term in the minimum is smaller.
This proves the claimed inequality.

Now, assume that equality is attained from left to right in  \eqref{eq:inner_asym_final_step}.
Then equality must hold in \eqref{eq:GM_AM_QM_ineq} as well,
which implies $\alpha_j = \sqrt{\frac{n (s+1)}{2k}}$ for all $j \in [k]$, i.e., (i).
Additionally, \eqref{eq:inner_cnorm_ineq} and \eqref{eq:inner_asym_neg_cnorm_ineq} must be fulfilled with equality.
As in the proof of Lemma~\ref{lem:Ball_eq},
the former requires
\[ 
    \sqrt{ \sum_{j \in [k]} \alpha_j^2 ((u^i)^T v^j)^2 }
    = 1 - c^T u^i
\]
for all $i \in [m]$.
With this at hand, we see that equality in \eqref{eq:inner_asym_neg_cnorm_ineq}
further implies
\[
    c^T u^i = 1
    \quad \text{or} \quad
    \sqrt{ \sum_{j \in [k]} \alpha_j^2 ((u^i)^T v^j)^2 }
    = s + c^T u^i
\]
for all $i \in [m]$ as well.
It follows that $c^T u^i \in \{1, \frac{1-s}{2}\}$.
If $c^T u^i = 1$, we additionally have
$\sqrt{ \sum_{j \in [k]} \alpha_j^2 ((u^i)^T v^j)^2 } = 1 - c^T u^i = 0$,
which implies $u^i \in \lin(\{v^1, \ldots, v^k\})^\perp$.
Now, we obtain
\[
    c^T v^j
    = \sum_{i \in [m]} \lambda_i (c^T u^i) (u^i)^T v^j
    = \frac{1-s}{2} \sum_{i \in [m]} \lambda_i (u^i)^T v^j
    = 0
\]
for all $j \in [k]$, where we used for the second equality that $c^T u^i = \frac{1-s}{2}$ or $(u^i)^T v^j = 0$ for all $i \in [m]$.
It follows $c^T x = \gauge{c}^2$ for all $x \in \aff(E) = c + \lin(\{v^1, \ldots, v^k\})$.
Since equality in the right-hand inequality in \eqref{eq:inner_asym_final_step} requires $\gauge{c}^2 = \frac{n (s-1)}{2}$, we completed the proof of (ii).

Finally, we turn to (iii),
for which we assume $s < 1+\frac{2}{n}$.
Now, (ii) shows $\gauge{c}^2 = \frac{n (s-1)}{2} < 1$.
Consequently, the Cauchy-Schwarz inequality yields $c^T u^i < 1$,
which implies by (ii) that $c^T u^i = \frac{1-s}{2}$, for all $i \in [m]$.
If $c \neq 0$, this would mean that all $u^i$ lie in a common hyperplane,
contradicting that they belong to a John decomposition.
This implies $c = 0$ and therefore $0 = \gauge{c}^2 = \frac{n (s-1)}{2}$, which means $s = 1$.
\end{proof}

With the above lemma, the inequality and equality conditions in Theorem~\ref{thm:inner} are established.
We turn to its tightness in the following.

We should notice the following concerning the upper bound in Theorem~\ref{thm:inner}:
Since $\frac{s_J(K)+1}{2} = \frac{n+1}{k+1}$ if and only if $s_J(K) = 2 \frac{n+1}{k+1} - 1 =: s_{n,k}$,
it is natural that the examples proving tightness for the theorem require different treatment depending on whether $s_J(K)$ is above or below the threshold $s_{n,k}$.
We begin with the case $s_J(K) \geq s_{n,k}$.

\begin{example} \label{ex:inner_high_asym}
Let $k \in [n-1]$, $T \in \CK^n$ a simplex with $\jel(T) = \B^n$,
and $E \subset T$ the inscribed Euclidean $k$-ball of radius $\rho = \sqrt{ \frac{n (n+1)}{k (k+1)} }$
of a $k$-face of $T$ (which exists by Proposition~\ref{prop:ball_thm}).
Then for any $s \in [s_{n,k},n]$, the body
\[
    K
    \coloneqq T \cap (-s T)
\]
satisfies $\jel(K) = \B^n$, $s_J(K) = s$, and $E \subset K$.
\end{example}
\begin{proof}
We begin by proving $E \subset - s_{n,k} T \subset -s T$, where the latter inclusion is clear from
$0 \in T$ and $s_{n,k} \leq s$.
Once the first inclusion is also established, we obtain $E \subset K$.
Since $T$ is in John position,
its vertices $x^1, \ldots, x^{n+1}$ belong to $\bd(n \B^n)$
and satisfy
\[
    T
    = \bigcap_{i \in [n+1]} H_{(-x^i,n)}^\leq.
\]
We assume they are indexed such that $F \coloneqq \conv(\{x^1, \ldots, x^{k+1}\})$
is the $k$-face of $T$ containing $E$.
Then with $V \coloneqq \aff(F) - x^{k+1}$ the linear $k$-space parallel to $\aff(F)$,
$B \coloneqq \B^n \cap V$,
and $c \coloneqq \sum_{j \in [k+1]} \frac{x^j}{k+1}$,
we have $E = c + \rho B$.
Hence, we may establish $E \subset -s_{n,k} T$ by proving
\[
    c^T x^i + \rho h_B(x^i)
    \leq n s_{n,k}
\]
for all $i \in [n+1]$.
Since $h_B(x^i) = \gauge{\op{x^i}{V}}$,
our next goal is to compute $\op{x^i}{V}$.
The first step to this end is recognizing that $\{x^j - x^{k+1} : j \in [k]\}$ is a basis of $V$.
Since for any $i,\ell \in [n+1]$, $i \neq \ell$, we have $(x^i)^T x^\ell = -n$,
it follows for $i > k+1$ and $j \in [k]$ that $(x^i)^T (x^j - x^{k+1}) = -n - (-n) = 0$.
Thus, $x^i \in V^\perp$ and $\op{x^i}{V} = 0$, so
\[
    c^T x^i + \rho h_B(x^i)
    = \sum_{j \in [k+1]} \frac{(x^j)^T x^i}{k+1} + \rho \gauge{\op{x^i}{V}}
    = \sum_{j \in [k+1]} \frac{-n}{k+1}
    = -n
    \leq n s_{n,k}
\]
for $i > k+1$.
If instead $i \in [k+1]$, we first observe that
$c = \sum_{j \in [k+1]} \frac{x^j}{k+1} = \sum_{j\in[n+1]\setminus[k+1]} \frac{-x^j}{k+1} \in V^\perp$
and $x^i - c \in V$,
so $x^i = c + (x^i - c)$ implies $\op{x^i}{V} = x^i - c$.
Consequently,
\[
    \gauge{\op{x^i}{V}}^2
    = \gauge{x^i}^2 + \gauge{c}^2 - 2 c^T x^i
    = n^2 + \frac{n (n-k)}{k+1} - 2 \frac{n (n-k)}{k+1}
    = \frac{n (n+1) k}{k+1},
\]
where $\gauge{c}^2 = \frac{n (n-k)}{k+1}$ and
$c^T x^i = \frac{n (n-k)}{k+1}$
can be computed directly or taken from Lemma~\ref{lem:Ball_eq}.
Now,
\[
    c^T x^i + \rho h_B(x^i)
    = \frac{n (n-k)}{k+1} + \sqrt{ \frac{n (n+1)}{k (k+1)} } \, \sqrt{ \frac{n (n+1) k}{k+1} }
    = \frac{n (n-k)}{k+1} + \frac{n (n+1)}{k+1}
    = n s_{n,k}
\]
finishes the proof of $E \subset - s_{n,k} T$.

Next, we observe from the definition of $K$ and $s_{n,k} > 1$
that $\B^n \subset K \subset T$,
so $\jel(K) = \B^n$.

Lastly, we have by $s \leq n$, $\jel(T) = \B^n$, and $s_J(T) = n$ that
\[
    \frac{s}{n} T
    \subset T \cap (-s T)
    = K
    \subset (s^2 T) \cap (-s T)
    = -s K
    \subset -s T
\]
and thus $s_J(K) = s$.
\end{proof}

To verify the upcoming Examples~\ref{ex:inner_mid_asym}~and~\ref{ex:inner_small_asym},
as well as Example~\ref{ex:john_rounding} in the appendix,
we prove the following lemma.
It provides a particular family of polytopes in John position.
Recall for arbitrary sets $A,B$ that $B^A$ denotes the set of all functions $f \colon A \to B$.

\begingroup
\newcommand\num{{|J|}}
\begin{lemma} \label{lem:construction}
Let $v^1, \ldots, v^n \in \R^n$ be an orthonormal basis,
$J \subset [n-1]$,
and $\tau \in \left[\sqrt{\frac{n - \num}{(\num+1) n}},1\right]$.
For $\delta \in \{-1, 1\}^J$ and $\sigma \in \{-1, 1\}^{[n-1]}$, define the Euclidean unit vectors
\begin{align*}
    a^{\delta,\tau} & \coloneqq \sum_{j \in J} \delta(j) \sqrt{\frac{1-\tau^2}{\num}} \, v^j +  \tau v^n,
        \\  
    b^{\sigma,\tau} & \coloneqq \sum_{j \in J} \sigma(j) \sqrt{\frac{(\num+1) n \tau^2 +\num -n}{\num n^2 \tau^2}} \, v^j + \sum_{j \in [n-1] \setminus J} \sigma(j) \frac{\sqrt{n \tau^2 + 1}}{n \tau} \, v^j -
    \frac{1}{n \tau} v^n,
\intertext{and define the polyhedron}
    P(J,\tau) & \coloneqq \bigcap_{\delta \in \{-1,1\}^J} H_{(a^{\delta,\tau},1)}^\leq \cap \bigcap_{\sigma \in \{-1,1\}^{[n-1]}} H_{(b^{\sigma,\tau},1)}^\leq.
\end{align*}
Then
any $K \in \CK^n$ with $\B^n \subset K \subset P(J,\tau)$ satisfies $\jel(K) = \B^n$.
\end{lemma}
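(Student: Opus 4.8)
The plan is to apply John's characterization, Proposition~\ref{prop:john}. The inclusion $\B^n \subseteq K$ supplies condition~(i), so it remains to exhibit a John decomposition for $K$. I would take as contact points the unit vectors $a^{\delta,\tau}$ and $b^{\sigma,\tau}$ themselves, equipped with two uniform weights $\mu$ and $\nu$ that are forced by the symmetry of the construction under flipping signs.

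First I would check that the construction is well posed. The radicand $\frac{(|J|+1)n\tau^2 + |J| - n}{|J|\,n^2\tau^2}$ appearing in $b^{\sigma,\tau}$ is non-negative precisely because $\tau \ge \sqrt{\frac{n-|J|}{(|J|+1)n}}$, the radicand $\frac{1-\tau^2}{|J|}$ appearing in $a^{\delta,\tau}$ is non-negative because $\tau \le 1$, and $\tau > 0$ since $|J| \le n-1$; so all vectors are genuinely defined. A short computation using the orthonormality of $v^1,\ldots,v^n$ then shows $\gauge{a^{\delta,\tau}} = \gauge{b^{\sigma,\tau}} = 1$ for every sign pattern (the squared coefficients telescope to $1$). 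Being unit vectors, each $a^{\delta,\tau}$ lies on the bounding hyperplane $\{x : (a^{\delta,\tau})^T x = 1\}$ of the halfspace $H_{(a^{\delta,\tau},1)}^\leq \supseteq P(J,\tau) \supseteq K$, and similarly for $b^{\sigma,\tau}$; since these points also lie in $\B^n \subseteq K$, they lie in $\bd(K) \cap \bd(\B^n)$.

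Next I would verify condition~(ii) with the weights $\mu \coloneqq \frac{n}{2^{|J|}(n\tau^2+1)}$ on each of the $2^{|J|}$ vectors $a^{\delta,\tau}$ and $\nu \coloneqq \frac{n^2\tau^2}{2^{n-1}(n\tau^2+1)}$ on each of the $2^{n-1}$ vectors $b^{\sigma,\tau}$, both positive. For the vanishing of $\sum \lambda_i u^i$ one sums over all sign patterns: for each $j \in [n-1]$ the $v^j$-components cancel in pairs (flip $\delta(j)$, resp.\ $\sigma(j)$), and the surviving $v^n$-components, $2^{|J|}\mu\tau$ from the $a$-family and $-2^{n-1}\nu\frac{1}{n\tau}$ from the $b$-family, cancel by the choice of $\mu,\nu$. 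For $\sum \lambda_i u^i (u^i)^T = I_n$, the same pairwise sign-cancellation annihilates every off-diagonal entry in the basis $v^1,\ldots,v^n$, and the diagonal entries split into three cases — an index in $J$, an index in $[n-1]\setminus J$, and the index $n$ — each of which reduces to an elementary identity in $n$, $|J|$, $\tau$ that one checks by direct substitution; all three evaluate to $1$. Proposition~\ref{prop:john} then yields $\jel(K) = \B^n$.

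The genuinely delicate step is this last verification: one has essentially three diagonal equations but only the two weights $\mu,\nu$ together with the single balancing relation between them, so the system looks over-determined, and the fact that all three diagonal coefficients become $1$ simultaneously is exactly where the specific coefficients defining $a^{\delta,\tau}$ and $b^{\sigma,\tau}$ (and again the lower bound on $\tau$) are used. A minor additional point is the degenerate case $J = [n-1]$, where the family of indices $[n-1]\setminus J$ is empty and that diagonal equation simply drops out; one checks the remaining equations are still consistent with the same weights. Likewise, when $\tau = 1$ or some radicand vanishes several of the prescribed contact points coincide, which is harmless since one may just add up the weights of equal points. Everything else is bookkeeping with sums over sign vectors.
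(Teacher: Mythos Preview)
Your proposal is correct and follows essentially the same route as the paper: you use the same contact vectors $a^{\delta,\tau}$, $b^{\sigma,\tau}$ with the same uniform weights $\mu = \lambda_a = \frac{n}{2^{|J|}(n\tau^2+1)}$ and $\nu = \lambda_b = \frac{n^2\tau^2}{2^{n-1}(n\tau^2+1)}$, verify the two John-decomposition identities via sign-cancellation for the off-diagonal part and a three-case diagonal check, and conclude by Proposition~\ref{prop:john}. The paper's write-up differs only in that it treats the degenerate case $J = \emptyset$ (rather than $J = [n-1]$) explicitly, but both edge cases are handled the same way and your account covers the substance.
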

\begin{proof}
It is straightforward to verify that the $a^{\delta,\tau}$ and $b^{\sigma,\tau}$ are well-defined Euclidean unit vectors.
By Proposition~\ref{prop:john}, it thus suffices to verify that $\{a^{\delta,\tau}, b^{\sigma,\tau} : \delta \in \{-1,1\}^J, \sigma \in \{-1,1\}^{[n-1]} \}$ is the set of vectors of some John decomposition.
We define
\[
    \lambda_a
    \coloneqq \frac{1}{2^\num} \frac{n}{n \tau^2 + 1}
    > 0
        \quad \text{and} \quad
    \lambda_b
    \coloneqq \frac{1}{2^{n-1}} \frac{n^2 \tau^2}{n \tau^2 +1}
    > 0,
\]
which satisfy
\begin{equation}
\label{eq:constr_lem_vector}
    \sum_{\delta \in \{-1,1\}^J} \lambda_a a^{\delta,\tau} + \sum_{\sigma \in \{-1,1\}^{[n-1]}} \lambda_b b^{\sigma,\tau}
    = 2^\num \lambda_a \tau v^n - 2^{n-1} \lambda_b \frac{1}{n \tau} v^n = 0.
\end{equation}
Moreover,
\[
    (v^i)^T \left( \sum_{\delta \in \{-1,1\}^J} \lambda_a a^{\delta,\tau} (a^{\delta,\tau})^T \right) v^j
    = \begin{cases}
        0, & \text{if } i \neq j,
        \\
        0, & \text{if } i = j \notin J \cup \{ n \},
        \\
        2^{l} \lambda_a \frac{1 - \tau^2}{\num}, & \text{if } i = j \in J,
        \\
        2^{l} \lambda_a \tau^2, & \text{if } i = j = n,
    \end{cases}
\]
and
\[
    (v^i)^T \left( \sum_{\sigma \in \{-1,1\}^{[n-1]}} \lambda_b b^{\sigma,\tau} (b^{\sigma,\tau})^{T} \right) v^j
    = \begin{cases}
        0, & \text{if } i \neq j,
        \\
        2^{n-1} \lambda_b \frac{n \tau^2 + 1}{n^2 \tau^2}, & \text{if } i = j \notin J \cup \{ n \},
        \\
        2^{n-1} \lambda_b \frac{(\num+1) n \tau^2 + \num - n}{\num n^2 \tau^2}, & \text{if } i = j \in J,
        \\
        2^{n-1} \lambda_b \frac{1}{n^2 \tau^2}, & \text{if } i = j = n.
    \end{cases}
\]
for all $i, j \in [n]$.
If $J \neq \emptyset$, then
\[
    2^\num \lambda_a \frac{1 - \tau^2}{\num} + 2^{n-1} \lambda_b \frac{(\num+1) n \tau^2 + \num - n}{\num n^2 \tau^2}
    = \frac{n (1-\tau^2) + (\num+1) n \tau^2 + \num - n}{\num (n \tau^2 + 1)}
    = 1,
\]
so we anyway obtain
\[
    (v^i)^T \left( \sum_{\delta \in \{-1,1\}^J} \lambda_a a^{\delta,\tau} (a^{\delta,\tau})^T + \sum_{\sigma \in \{-1,1\}^{[n-1]}} \lambda_b b^{\sigma,\tau} (b^{\sigma,\tau})^T \right) v^j
    = \begin{cases}
        0, & \text{if } i \neq j,
        \\
        1, & \text{if } i = j
    \end{cases}
\]
for all $i, j \in [n]$.
Since $v^1, \ldots, v^n$ is an orthonormal basis of $\R^n$,
we conclude that
\begin{equation}
\label{eq:constr_lem_matrix}
    \sum_{\delta \in \{-1,1\}^J} \lambda_a a^{\delta,\tau} (a^{\delta,\tau})^T + \sum_{\sigma \in \{-1,1\}^{[n-1]}} \lambda_b b^{\sigma,\tau} (b^{\sigma,\tau})^T
    = I_n.
\end{equation}
With (\ref{eq:constr_lem_vector}) and (\ref{eq:constr_lem_matrix}) established,
the proof is complete as discussed above.
\end{proof}
\endgroup

Now, we turn to the tightness in Theorem~\ref{thm:inner}
for $s_J(K) \leq s_{n,k}$.
As already observed in Lemma~\ref{lem:inner_asym},
tightness anyway requires $s_J(K) \geq 1 + \frac{2}{n}$ (besides $s_J(K)=1$, which is already covered by Corollary~\ref{cor:ball_sym_improv}).
Let us point out that $k \leq n-1$ always gives $s_{n,k} \geq 1 + \frac{2}{n}$.

\begin{example} \label{ex:inner_mid_asym}
Let $v^1, \ldots, v^n \in \R^n$ be an orthonormal basis,
$k \in [n-1]$,
$s \in [1+\frac{2}{n}, s_{n,k}]$,
and $\rho = \sqrt{\frac{n (s+1)}{2}}$.
Define $c \coloneqq \sqrt{\frac{n (s-1)}{2}} \, v^n$ and
\[
    K
    \coloneqq \conv \left(
        \B^n \cup
        \left\{
            c \pm \rho v^j, -\frac{1}{s} c \pm \frac{\rho}{s} v^j:
            j \in [k]
        \right\}
    \right).
\]
Then $K$ satisfies $\jel(K) = \B^n$,
$s(K) < s_J(K) = s$,
and $E \coloneqq c + \frac{\rho}{\sqrt{k}} (\B^n \cap \lin(\{ v^1, \ldots, v^k \})) \subset K$.
\end{example}

Let us point out that the strict inequality $s(K) < s_J(K)$
shows that the John asymmetry in Theorem~\ref{thm:inner}
cannot be replaced with the Minkowski asymmetry without changing the formula.

\begin{proof}
To prove $\jel(K) = \B^n$,
we shall use Lemma~\ref{lem:construction}
and show $K \subset P([n-1]\setminus[k],\tau)$
for $\tau \coloneqq \sqrt{\frac{2}{n (s-1)}}$.
We first observe that the choice of $s$ guarantees
\[
    \sqrt{\frac{n - (n - (k+1))}{n((n - (k+1))+1)}}
    = \sqrt{\frac{k+1}{n (n-k)}}
    = \sqrt{\frac{2}{n (s_{n,k} - 1)}}
    \leq \tau
    \leq \sqrt{\frac{2}{n (1+\frac{2}{n} - 1)}}
    = 1,
\]
as required when applying Lemma~\ref{lem:construction} for the index set $J = [n-1] \setminus [k]$.

To verify $K \subset P([n-1] \setminus [k],\tau)$,
we reuse the notation of Lemma~\ref{lem:construction} and let $\delta \in \{-1,1\}^{[n-1] \setminus [k]}$ and $\sigma \in \{-1,1\}^{[n-1]}$.
We need to check for all $x \in K$ that $x^T a^{\delta,\tau}, x^T b^{\sigma,\tau} \leq 1$,
where we may disregard all $x \in \B^n$ by the Cauchy-Schwarz inequality.
For any $j \in [k]$, we have
\[
    (c \pm \rho v^j)^T a^{\delta,\tau}
    = \sqrt{ \frac{n (s-1)}{2} } \, \tau
    = \sqrt{ \frac{n (s-1)}{2} } \, \sqrt{ \frac{2}{n (s-1)} }
    = 1
\]
and
\[
    (c \pm \rho v^j)^T b^{\sigma,\tau}
    = \pm \sigma(j) \rho \frac{\sqrt{n \tau^2 + 1}}{n \tau} - \sqrt{\frac{n (s-1)}{2}} \frac{1}{n \tau}
    = \pm \sigma(j) \frac{s+1}{2} - \frac{s-1}{2}
    \in \{-s,1\}.
\]
This also yields $-\frac{1}{s} (c \pm \rho v^j)^T a^{\delta,\tau},
-\frac{1}{s} (c \pm \rho v^j)^T b^{\sigma,\tau} \leq 1$.
Hence, we indeed have $K \subset P([n-1]\setminus[k],\tau)$.

For $E \subset K$,
we observe that $K$ contains the regular $k$-cross-polytope
$C \coloneqq \conv(\{ c \pm \rho v^j : j \in [k] \})$.
It is straightforward to see that the Euclidean circumball of $C$ within $\aff(C)$ is a ball of radius $\rho$ centered at $c$.
It is further well-known that the Euclidean circum- and inball of $C$ within $\aff(C)$ are concentric and have radius ratio $\sqrt{k}$.
We conclude that $E$ is the Euclidean inball of $C$ within $\aff(C)$ and therefore $E \subset C \subset K$.

Lastly, we turn to the claim $s(K) < s_J(K) = s$.
It is clear that $-K \subset sK$, so $s_J(K) \leq s$ by $\jel(K) = \B^n$.
Moreover, since $K$ contains $E$,
a Euclidean $k$-ball of radius $\sqrt{\frac{n (s+1)}{2k}}$,
Lemma~\ref{lem:inner_asym} implies $s_J(K) \geq s$ and in summary $s_J(K) = s$.

Finally, we show for $\varepsilon > 0$ small enough that
$-\frac{1}{s} K + \varepsilon v^n \subset \inte(K)$, proving $s(K) < s_J(K)$.
If $\varepsilon < 1 - \frac{1}{s}$, then clearly
\[
    - \frac{1}{s} \B^n + \varepsilon v^n
    \subset \left( \frac{1}{s} + \varepsilon \right) \B^n
    \subset \inte(\B^n)
    \subset \inte(K).
\]
If additionally
$\varepsilon < \frac{2}{s} \sqrt{\frac{n (s-1)}{2}}$,
we have by $\frac{1}{s} (c \pm \rho v^j) \in (0,c \pm \rho v^j) \subset \inte(K)$ also
\[
    -\frac{1}{s} (c \pm \rho v^j) + \varepsilon v^n
    \in \left( -\frac{1}{s} (c \pm \rho v^j), \frac{1}{s} (c \mp \rho v^j) \right)
    \subset \inte(K).
\]
If further
$\varepsilon < (1 - \frac{1}{s^2}) \sqrt{ \frac{n (s-1)}{2} }$,
we have by
$\frac{1}{s^2} (c \pm \rho v^j) \in (0,c \pm \rho v^j) \subset \inte(K)$
together with
$c \pm \frac{\rho}{s^2} v^j \in [c-\rho v^j,c+\rho v^j] \subset K$
that
\[
    \frac{1}{s^2} (c \pm \rho v^j) + \varepsilon v^n
    \in \left( \frac{1}{s^2} (c \pm \rho v^j), c \pm \frac{\rho}{s^2} v^j \right)
    \subset \inte(K).
\]
Hence, if all three conditions on $\varepsilon$ are satisfied,
then all points in the generating set for $-\frac{1}{s} K + \varepsilon v^n$ are in the interior of $K$.
It follows $-\frac{1}{s} K + \varepsilon v^n \subset \inte(K)$ as claimed.
\end{proof}

Our final goal for this section is to provide an example
that shows which inner $k$-radii are possible for a convex body $K \in \CK^n$ in John position with $s_J(K) \in (1,1+\frac{2}{n})$.
Unfortunately, we do not know if the example we provide
gives the extremal value for all combinations of
$n$, $k$, and $s_J(K)$.
We can only prove this for the planar case,
which we do in the following section.

We expect an extremal $k$-ellipsoid for the John asymmetry range $s_J(K) \in (1,1+\frac{2}{n})$ to satisfy conditions analogous to those obtained in Lemmas~\ref{lem:Ball_eq}~and~\ref{lem:inner_asym}.
Qualitatively, they should remain the same,
which means that the $k$-ellipsoid should again be a $k$-ball of appropriate radius such that the line connecting the origin and the center $c$ of the $k$-ball is perpendicular to the affine hull of the $k$-ball.
We also expect that $c$ again has to have a specific Euclidean norm and can have only two specific values as inner product with the vectors in the John decomposition.
The proof of Theorem~\ref{thm:planar_diam} below shows that these assumptions are true at least in the planar case
(see Remark~\ref{rem:planar_diam} for details).
The upcoming Example~\ref{ex:inner_small_asym} is constructed with these assumptions on the extremal $k$-ellipsoids in mind.

The statement and proof of the example require
the following formulas and some analytic properties of these terms.
For given $n$ and $k \in [n-1]$,
we define for all $s \in [1,1+\frac{2}{n}]$
\begin{align*}
    \zeta_{n,k}(s)
    & \coloneqq \sqrt{ \left( 2 \left( 1+\frac{2}{n}-s \right) - (s^2-1) \right)^2
    + 8 \left( 1+\frac{2}{n}-s \right) (s^2-1) \frac{n-k}{n} }
\intertext{
and
}
    \mu_{n,k}(s)
    & \coloneqq \frac{n}{8 (k+1)} \left( n (s-1)^2 + 4k(s+1) + 4 + n \zeta_{n,k}(s) \right).
\intertext{
For $\mu \in [n,n+1]$, we further define
}
    \tau_{n,k}(\mu)
    & \coloneqq \frac{k \sqrt{\mu-n} + \sqrt{(k (\mu-n)+\mu-k) \mu}}{k (\mu-n) + \mu}.
\end{align*}
The purpose of $\zeta_{n,k}(s)$ is to simplify the statement of $\mu_{n,k}(s)$.
The latter will be used to define the bodies in Example~\ref{ex:inner_small_asym}.
Lastly, $\tau_{n,k}(\mu_{n,k}(s))$ will take the role of $\tau$ in an application of Lemma~\ref{lem:construction}.
The lemma below collects the required analytical properties of these functions.
Since the proof is purely technical,
we postpone it to Appendix~\ref{app:technical}.

\begin{lemma} \label{lem:mu_tau_prop}
Let $k \in [n-1]$. Then
\begin{enumerate}[(i)]
\item $\mu_{n,k} \colon [1,1+\frac{2}{n}] \to \R$ is strictly increasing with
$\mu_{n,k}(1) = n$ and $\mu_{n,k}(1+\frac{2}{n}) = n+1$,
\item $\tau_{n,k} \colon [n,n+1] \to \R$ is strictly increasing with $\tau_{n,k}(n) = \sqrt{\frac{n-k}{n}}$ and $\tau_{n,k}(n+1) = 1$,
\item for all $\mu \in [n,n+1]$,
the in $t$ quadratic equation
$(t \sqrt{\mu-n} - 1)^2 = \mu \frac{1-t^2}{k}$
has a unique non-negative solution, namely $t = \tau_{n,k}(\mu)$, and
\item for all $s \in [1,1+\frac{2}{n}]$, we have
$(s-1) \tau_{n,k}(\mu_{n,k}(s)) = \frac{2}{n} \sqrt{\mu_{n,k}(s)-n}$.
\end{enumerate}
\end{lemma}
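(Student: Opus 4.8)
The plan is to verify the four claimed properties of $\mu_{n,k}$ and $\tau_{n,k}$ essentially by direct computation, but organized so that the algebra collapses rather than explodes. I would begin with (iii) and (ii), since they are the purely algebraic backbone, and then use them to deduce (i) and (iv).

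For (iii): fix $\mu \in [n,n+1]$ and expand the quadratic equation $(t\sqrt{\mu-n}-1)^2 = \mu\frac{1-t^2}{k}$ into standard form $(k(\mu-n)+\mu)t^2 - 2k\sqrt{\mu-n}\,t + (k-\mu) = 0$. The leading coefficient $k(\mu-n)+\mu$ is strictly positive, and the constant term $k-\mu \le 0$ (with equality only at $\mu = k$, which cannot happen for $k \le n-1 \le \mu$ unless $k=n-1$ and $\mu=n$; I would note that boundary case separately or observe $k < n \le \mu$ so in fact $k-\mu<0$ always here). Hence the product of the two real roots is $\le 0$, so there is exactly one non-negative root, given by the quadratic formula with the $+$ sign; simplifying the discriminant $4k^2(\mu-n) - 4(k(\mu-n)+\mu)(k-\mu) = 4\mu(k(\mu-n)+\mu-k)$ shows this root equals exactly $\tau_{n,k}(\mu)$ as defined. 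That simultaneously proves (iii) and rewrites $\tau_{n,k}(\mu)$ as the relevant root, which is the cleanest handle for monotonicity.

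For (ii): evaluate $\tau_{n,k}$ at the endpoints — at $\mu=n$ the formula gives $\tau_{n,k}(n) = \sqrt{(n-k)n}/n = \sqrt{(n-k)/n}$, and at $\mu = n+1$ one gets numerator $k + \sqrt{(k+n+1-k)(n+1)} = k + (n+1)$ over denominator $k + n+1$, i.e. $1$. For strict monotonicity in between, rather than differentiating the messy explicit formula I would use the characterization from (iii): $\tau = \tau_{n,k}(\mu)$ is the unique non-negative solution of $G(t,\mu) := (k(\mu-n)+\mu)t^2 - 2k\sqrt{\mu-n}\,t + (k-\mu) = 0$, then apply the implicit function theorem, checking $\partial_t G > 0$ at the relevant root (true since it is the larger root of an upward parabola with a nonpositive root as its partner, provided $\tau>0$, which holds on the open interval) and $\partial_\mu G < 0$ there; concluding $d\tau/d\mu > 0$. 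Here $\partial_\mu G = (k+1)t^2 - k(\mu-n)^{-1/2} t - 1$, and I would show this is negative at $t=\tau_{n,k}(\mu)$ — this is the step I expect to be the main obstacle, since it requires using the defining equation $G(\tau,\mu)=0$ to rewrite $\partial_\mu G(\tau,\mu)$ and then argue a sign; I anticipate substituting $\mu\frac{1-\tau^2}{k} = (\tau\sqrt{\mu-n}-1)^2 \ge 0$, hence $\tau \le 1$, together with $\tau\sqrt{\mu-n}\le 1$ at the relevant root, to bound the expression.

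For (i): $\mu_{n,k}$ is an explicit combination of polynomials and the single square root $\zeta_{n,k}(s)$. At $s=1$, $\zeta_{n,k}(1) = \sqrt{(2\cdot\frac2n)^2} = \frac4n$, giving $\mu_{n,k}(1) = \frac{n}{8(k+1)}(0 + 8k + 4 + 4) = \frac{n(8k+8)}{8(k+1)} = n$; at $s = 1+\frac2n$, the first bracket in $\zeta$ vanishes and the second term $8(1+\frac2n-s)(s^2-1)\frac{n-k}{n}$ also vanishes, so $\zeta_{n,k}(1+\frac2n)=0$ — wait, I should recompute: the bracketed term $2(1+\frac2n-s)-(s^2-1)$ at $s=1+\frac2n$ equals $-((1+\frac2n)^2-1)$, not zero; so $\zeta_{n,k}(1+\frac2n) = (1+\frac2n)^2-1 = \frac4n + \frac4{n^2}$, and then $\mu_{n,k}(1+\frac2n) = \frac{n}{8(k+1)}(n\cdot\frac4{n^2} + 4k(2+\frac2n) + 4 + n(\frac4n+\frac4{n^2})) = \frac{n}{8(k+1)}(\frac4n + 8k + \frac{8k}n + 4 + 4 + \frac4n) = \frac{n}{8(k+1)}(\frac8n + 8k + \frac{8k}n + 8) = \frac{8(1 + nk + k + n)}{8(k+1)} = \frac{(n+1)(k+1)}{k+1} = n+1$, as claimed. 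For strict monotonicity on $[1,1+\frac2n]$ I would differentiate $\mu_{n,k}$, but the simplest route is to show the polynomial part $P(s) = n(s-1)^2 + 4k(s+1)+4$ is increasing (obvious for $s\ge 1$) and then handle $n\zeta_{n,k}(s)$: write $\zeta_{n,k}(s)^2 = (2(1+\frac2n-s)-(s^2-1))^2 + 8(1+\frac2n-s)(s^2-1)\frac{n-k}n$ and show this is monotone — alternatively, and more robustly, observe that $\mu_{n,k}$ is continuous on $[1,1+\frac2n]$ with the stated endpoint values, and that by (iii)/(ii) the map $s \mapsto \tau_{n,k}(\mu_{n,k}(s))$ must be well-defined; if monotonicity of $\mu_{n,k}$ proves recalcitrant I would instead verify (iv) directly and deduce that $\mu_{n,k}(s)$ is determined implicitly by $(s-1)\tau_{n,k}(\mu_{n,k}(s)) = \frac2n\sqrt{\mu_{n,k}(s)-n}$, which by (iii) (squaring and using the defining quadratic) is equivalent to a relation whose solution in $\mu$ can be shown unique and increasing in $s$, recovering (i). Finally, for (iv): set $\mu = \mu_{n,k}(s)$, $\tau = \tau_{n,k}(\mu)$, and verify $(s-1)\tau = \frac2n\sqrt{\mu-n}$ by squaring both sides and substituting the definition of $\mu_{n,k}(s)$ — since $\tau$ satisfies $(\tau\sqrt{\mu-n}-1)^2 = \mu\frac{1-\tau^2}{k}$ from (iii), one can eliminate $\tau^2$ and reduce everything to a polynomial identity in $s$ (and the single radical $\zeta_{n,k}(s)$), which I expect to check after clearing denominators; this is tedious but mechanical. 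Overall the main obstacle is the sign verification underlying strict monotonicity in (ii), and a secondary obstacle is keeping the algebra in (iv) under control — I would mitigate the latter by proving (iv) first and using it to streamline (i).
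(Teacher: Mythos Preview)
Your plan for (iii) and (iv) is essentially the paper's: expand the quadratic and apply the quadratic formula for (iii); for (iv) the paper packages your ``square and substitute'' by observing that $\mu_{n,k}(s)$ is a root of the explicit quadratic $q(t) = k(2t-n(s+1))^2 + t(4(t-n)-n^2(s-1)^2)$, and that $q(\mu) = kn^2(s-1)^2\,p_\mu\bigl(\tfrac{2\sqrt{\mu-n}}{n(s-1)}\bigr)$ with $p_\mu$ the quadratic from (iii).

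For (ii) you take a genuinely different route. The paper differentiates the explicit formula for $\tau_{n,k}(\mu)$, clears radicals, writes the numerator as $g^+(\mu)-g^-(\mu)$, and then verifies positivity via the factorization
\[
g^+(\mu)^2 - g^-(\mu)^2 = (n+1-\mu)\bigl(n(n+1)-(k+1)\mu\bigr)\bigl((k+1)\mu-nk\bigr)^2.
\]
Your implicit-function approach is cleaner and does go through: $\tau \le 1$ follows directly from (iii) (the right side $\mu\tfrac{1-\tau^2}{k}$ must be nonnegative), and the positive root of $t \mapsto (k+1)t^2 - \tfrac{k}{\sqrt{\mu-n}}\,t - 1$ is at least $1$ precisely because $\sqrt{\mu-n} \ge \mu - n$ on $(n,n+1)$; hence $\partial_\mu G(\tau,\mu) < 0$ there. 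This is less computational than the paper's factorization, though you should state this last observation rather than leave the sign check as an anticipated obstacle.

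For (i) you compute the endpoints correctly but do not commit to a monotonicity argument. The paper differentiates $\mu_{n,k}$ directly; the decisive step is the crude bound $\zeta_{n,k}(s) \ge 2(1+\tfrac{2}{n}-s)-(s^2-1)$, after which the expression one needs to be positive collapses (after a tedious expansion) to $8(n-k)(1+\tfrac{2}{n}-s)(s-1)$. Your proposed alternative --- prove (iv) first, then recover monotonicity of $\mu_{n,k}$ from the implicit relation --- is plausible, but you have not indicated how you would show the implicit solution in $\mu$ is increasing in $s$, and that step is not obviously lighter than the paper's direct computation.
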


With the above properties, the proof of the following example
is rather straightforward.

\begin{example} \label{ex:inner_small_asym}
Let $v^1, \ldots, v^n \in \R^n$ be an orthonormal basis,
$k \in [n-1]$,
$s \in [1, 1 + \frac{2}{n}]$,
and $\rho = \sqrt{\mu_{n,k}(s)}$.
Define $c \coloneqq \sqrt{\rho^2-n} \, v^n$ and
\[
    K
    \coloneqq \conv \left(
        \B^n \cup
        \left\{
            c \pm \rho v^j, -\frac{1}{s} c \pm \frac{\rho}{s} v^j:
            j \in [k]
        \right\}
    \right).
\]
Then $K$ satisfies $\jel(K) = \B^n$,
$s_J(K) = s$,
and $E \coloneqq c + \frac{\rho}{\sqrt{k}} (\B^n \cap \lin(\{ v^1, \ldots, v^k \})) \subset K$.
\end{example}

Note that Lemma~\ref{lem:mu_tau_prop}~(i) shows that the $k$-ellipsoid $E$ above has radius at least $\sqrt{\frac{n}{k}}$ as mentioned in the introduction.

\begin{proof}
Lemma~\ref{lem:mu_tau_prop}~(i) shows
$\rho \in [\sqrt{n},\sqrt{n+1}]$,
so $c$ is well-defined.
The inclusion $E \subset K$ is shown in the same way as for
Example~\ref{ex:inner_mid_asym}.

Next, we turn to the claim $\jel(K) = \B^n$.
We again want to use Lemma~\ref{lem:construction}
and show for $\tau \coloneqq \tau_{n,k}(\rho^2)$ that
$K \subset P([k], \tau)$.
Lemma~\ref{lem:mu_tau_prop}~(i)~and~(ii) hereby show that $\tau$ is well-defined and satisfies
$\sqrt{\frac{n-k}{(k+1)n}} \leq \tau \leq 1$.

Now, let $\delta \in \{-1, 1\}^{[k]}$ and
$\sigma \in \{-1, 1\}^{[n-1]}$ be chosen arbitrarily.
Using the notation in Lemma~\ref{lem:construction},
we need to prove for all $j \in [k]$ that
$(c \pm \rho v^j)^T a^{\delta,\tau}, (c \pm \rho v^j)^T b^{\sigma,\tau} \in [-s, 1]$.

To prove
$(c \pm \rho v^j)^T a^{\delta,\tau} \in [-s,1]$,
we show
\[
    \rho \sqrt{\frac{1-\tau^2}{k}}
    = 1 - \tau \sqrt{\rho^2-n}
    \leq s + \tau \sqrt{\rho^2-n}.
\]
The right-hand inequality is clear.
Since $\tau \leq 1$ and $\sqrt{\rho^2-n} \leq 1$,
the left-hand equality is equivalent to
\[
    \rho^2 \frac{1-\tau^2}{k}
    = \left( 1 - \tau \sqrt{\rho^2-n} \right)^2.
\]
This is true by Lemma~\ref{lem:mu_tau_prop}~(iii).

Second, to prove
$(c \pm \rho v^j)^T b^{\sigma,\tau} \in [-s,1]$,
we show
\begin{equation} \label{eq:inner_small_asym_second}
    \rho \sqrt{\frac{(k+1) n \tau^2 + k - n}{k n^2 \tau^2}}
    = 1 + \frac{\sqrt{\rho^{2} - n}}{n \tau}
    = s - \frac{\sqrt{\rho^{2} - n}}{n \tau}.
\end{equation}
The left-hand equality is equivalent to
\[
    \rho^2 \frac{(k+1) n \tau^2 + k - n}{k}
    = \left( n \tau + \sqrt{\rho^2-n} \right)^2,
\]
where it is straightforward to verify that
\[
    \left( n \tau + \sqrt{\rho^2-n} \right)^2 - \rho^2 \frac{(k+1) n \tau^2 + k - n}{k}
    = n \left( \rho^2 \frac{1-\tau^2}{k} - \left( \tau \sqrt{\rho^2-n} - 1 \right)^2 \right).
\]
The right-hand term equals $0$ by Lemma~\ref{lem:mu_tau_prop}~(iii).
Lastly, the right-hand equality in \eqref{eq:inner_small_asym_second} is equivalent to
\[
    s-1
    = 2 \frac{\sqrt{\rho^2-n}}{n \tau},
\]
which is true by Lemma~\ref{lem:mu_tau_prop}~(iv).

Finally, for $s_J(K) = s$,
we again observe that clearly $K \subset -sK$,
so $s_J(K) \leq s$.
Moreover, we have seen above
for $b^{\sigma,\tau}$ with $\sigma(1) = -1$ that $h_K(b^{\sigma,\tau}) = 1$ and $(c+\rho v^1)^T b^{\sigma,\tau} = -s$.
Thus, $c+\rho v^1 \in K \subset -s_J(K) K \subset H_{(-b^{\sigma,\tau},s_J(K))}^\leq$
implies $s_J(K) \geq s$ and altogether $s_J(K) = s$.
\end{proof}

\section{The Planar Case}
\label{sec:planar_diam}

This section is devoted to the proof of Theorem~\ref{thm:planar_diam}.
The theorem proves that Example~\ref{ex:inner_small_asym} provides the extreme case at least for $n=2$ and $k=1$.
To execute the proof, we require some auxiliary results.

The first two auxiliary results collect some direct consequences of Proposition~\ref{prop:john}.
The probably most well-known such consequence is John's result on the approximation of convex bodies by $n$-ellipsoids \cite{Jo}.
Here, we require a sharpening of this result from \cite[Theorem~7.1]{BrKoSharp}
(see also \cite[Theorem~9]{BeFr} for the analogous result concerning Loewner ellipsoids).
A detailed discussion about the tightness and the differences in the lower bound compared to \cite{BrKoSharp} is provided in Appendix~\ref{app:sJ}.

\begin{proposition} \label{prop:john_rounding}
Let $K \in \CK^n$ with $\jel(K) = \B^n$. Then the smallest $\rho \geq 0$ with $K \subset \rho \B^n$ satisfies
\[
    s_J(K) \leq \rho \leq \sqrt{n s_J(K)}.
\]
Both bounds are best possible for any prescribed $s_J(K) \in [1,n]$.
\end{proposition}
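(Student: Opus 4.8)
The plan is to prove the two inequalities in Proposition~\ref{prop:john_rounding} separately, using the John decomposition supplied by Proposition~\ref{prop:john}. Throughout, let $u^1,\dots,u^m\in\bd(K)\cap\bd(\B^n)$ and $\lambda_1,\dots,\lambda_m>0$ be a John decomposition for $K$, and let $s\coloneqq s_J(K)$.

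For the \emph{lower bound} $s\le\rho$, the idea is to test $K$ against the supporting halfspaces at the contact points $u^i$. Since $\B^n\subset K$, each $u^i$ lies on $\bd(K)$, and the John conditions force $u^i$ to be a contact point with outer normal $u^i$; that is, $K\subset H^{\le}_{(u^i,1)}$ for every $i$. By definition of the John asymmetry, $K-c_J(K)\subset s\,(c_J(K)-K)$ with $c_J(K)=0$, i.e.\ $-K\subset sK$, which combined with $\B^n\subset K$ gives $-\B^n\subset sK$, hence $-u^i/s\in K$ for each $i$. Applying the halfspace inequality $K\subset H^{\le}_{(u^j,1)}$ to the point $-u^i/s\in K$ yields $-(u^i)^Tu^j\le s$ for all $i,j$. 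Now sum over $i$ with weights $\lambda_i$: $\sum_i\lambda_i\big(-(u^i)^Tu^j\big) = -\big(\sum_i\lambda_iu^i\big)^Tu^j = 0 \le s\cdot n$ is uninformative, so instead I would apply the inequality more cleverly — take the point $-u^i/s$ and expand $\|{-u^i/s}\|^2$ against the quadratic form, or rather: since $-u^i/s\in K\subset H^{\le}_{(u^j,1)}$, summing $\lambda_j$ times $(u^j)^T(-u^i/s)\le 1$ over $j$ gives $-(1/s)(u^i)^T\big(\sum_j\lambda_ju^j\big)=0\le n$. The productive route is to fix $i$, write $v\coloneqq -u^i/s\in K$, and use that for \emph{some} $j$ the inequality $(u^j)^Tv\le 1$ must be reasonably tight: in fact testing against $j=i$ gives $-1/s\le 1$, again weak. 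So the correct argument compares the \emph{outer} radius directly: $K\subset\rho\B^n$ forces $-K\subset\rho\B^n$ as well, but we need $-K\subset sK$; applying the John decomposition, $\rho\ge s$ follows because $-u^i\in -\B^n\subset -K$ must be covered, and one shows $-u^i\notin \sigma\B^n$ for $\sigma<s$ is impossible only via the asymmetry's definition together with $\B^n\subset K\subset\rho\B^n$ giving $s_J(K)\le \rho$ by monotonicity of the covering dilatation — indeed if $K\subset\rho\B^n$ then $-K\subset\rho\B^n=\rho\,(\rho\B^n)/\rho\supset$ ... ; cleanest: $\B^n\subset K$ and $K\subset\rho\B^n$ imply $-K\subset\rho\B^n=\rho\,\B^n\subset\rho\,K$, so $-K\subset\rho K$ and hence $s_J(K)\le\rho$.

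For the \emph{upper bound} $\rho\le\sqrt{ns}$, I would bound $\|x\|^2$ for $x\in K$ using the John conditions. Fix $x\in K$. For each $i$ we have $(u^i)^Tx\le 1$ from $K\subset H^{\le}_{(u^i,1)}$, and from $-K\subset sK\subset s\,H^{\le}_{(u^i,1)}$ (using $K\subset H^\le_{(u^i,1)}$, actually from $-x\in sK$) we get $(u^i)^T(-x)\le s$, i.e.\ $(u^i)^Tx\ge -s$. Thus $-s\le (u^i)^Tx\le 1$ for every $i$. Therefore $\big((u^i)^Tx\big)^2 \le \max\{1,s^2\} $ is too lossy; instead use $\big((u^i)^Tx+ (s-1)/2\big)^2\le \big((s+1)/2\big)^2$, i.e.\ $\big((u^i)^Tx\big)^2 \le (s-1)(u^i)^Tx + s$. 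Now compute
\[
\|x\|^2 = x^T I_n x = \sum_{i\in[m]}\lambda_i\big((u^i)^Tx\big)^2 \le (s-1)\,x^T\!\Big(\sum_i\lambda_i u^i\Big) + s\sum_i\lambda_i = 0 + sn,
\]
using $\sum_i\lambda_iu^i=0$ and $\sum_i\lambda_i=n$. Hence $\|x\|\le\sqrt{ns}$ for all $x\in K$, giving $\rho\le\sqrt{ns}$.

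For \emph{tightness}, given a prescribed $s\in[1,n]$ I would exhibit a single body attaining both $s_J(K)=s$ and $K\subset\sqrt{ns}\,\B^n$ with the bound tight — the natural candidate is a suitable "truncated simplex" $T\cap(-sT)$-type construction, or more directly the explicit family in Appendix~\ref{app:sJ} referenced in the text (Example~\ref{ex:john_rounding}); since the problem says I may assume earlier results, I would simply cite that the body there has $\jel=\B^n$, John asymmetry exactly $s$, and a boundary point at distance exactly $\sqrt{ns}$ from the origin, while the lower bound's tightness at $\rho=s$ is witnessed by, e.g., a body between $\B^n$ and $s\B^n$ arranged so its Loewner-dual John data still forces asymmetry $s$ (again from the appendix). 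The main obstacle is not either inequality — both are short once the key "shifted square" trick $\big((u^i)^Tx\big)^2\le(s-1)(u^i)^Tx+s$ is spotted — but rather the \emph{tightness} claim, which requires a careful construction realizing each value of $s$ simultaneously in both bounds (or two constructions, one per bound); I would defer the details of that construction to the appendix as the paper does, and in the main proof limit myself to noting which earlier example certifies it.
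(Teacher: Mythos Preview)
Your argument is correct. For the lower bound, once the stream-of-consciousness settles on the final line---$\B^n\subset K$ and $K\subset\rho\B^n$ give $-K\subset\rho\B^n\subset\rho K$, hence $s_J(K)\le\rho$---you have exactly the paper's one-line justification (stated in Appendix~\ref{app:sJ} as the chain $K\subset\rho\B^n\subset\rho(-K)$); the preceding failed attempts via contact-point inequalities should simply be deleted. For the upper bound the routes genuinely differ: the paper gives no proof here, citing \cite{BrKoSharp} outright, whereas you supply a self-contained argument via the factorization $\big((u^i)^Tx-1\big)\big((u^i)^Tx+s\big)\le 0$, i.e.\ $\big((u^i)^Tx\big)^2\le(s-1)(u^i)^Tx+s$, summed against the John decomposition identities $\sum_i\lambda_i u^i(u^i)^T=I_n$, $\sum_i\lambda_i u^i=0$, $\sum_i\lambda_i=n$. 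This is short and more informative than a bare citation; it also makes transparent exactly where each of the two bounds $-s\le(u^i)^Tx\le 1$ enters. For tightness both you and the paper invoke Example~\ref{ex:john_rounding} (upper bound) and the bodies $\conv(\B^n\cup\{sv\})$ discussed immediately after it (lower bound), so there is no discrepancy there.
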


The next proposition collects properties that have essentially (up to their equality cases) been obtained in \cite[Remarks]{Ball}.
Tracking the equality cases is needed to obtain the characterization of the equality case in Theorem~\ref{thm:planar_diam}.
The equality case in (iii) is needed for the proof of Theorem~\ref{thm:aff_d_r_ratio} in the next section.
For the convenience of the reader, we provide a short proof of the full lemma.

\begin{proposition} \label{prop:john_vector_prop}
Let $K \in \CK^n$ with $\jel(K) = \B^n$. Then any $x,y \in K$ satisfy
\begin{enumerate}[(i)]
\item $x^T y \geq -n$, with equality if and only if every vector $u \in \bd(\B^n)$ in a John decomposition for $K$ satisfies $x^T u = 1$ or $y^T u = 1$,
\item $\gauge{x-y} \leq \sqrt{2 (\max\{ \gauge{x}^2, \gauge{y}^2 \} + n)}$, with equality if and only if $\gauge{x} = \gauge{y}$ and $x^T y = -n$, and
\item $\gauge{x-y} \leq \sqrt{2 n (n+1)}$, with equality if and only if $\gauge{x} = \gauge{y} = n$ and $x \neq y$.
\end{enumerate}
\end{proposition}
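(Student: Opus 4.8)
The plan is to use the John decomposition from Proposition~\ref{prop:john} as the engine for all three parts. Write $K \in \CK^n$ in John position, so $\B^n \subset K \subset P = \bigcap_{i \in [m]} H_{(u^i,1)}^\leq$ where the $u^i \in \bd(\B^n)$ together with weights $\lambda_i > 0$ form a John decomposition, satisfying $\sum_i \lambda_i u^i = 0$, $\sum_i \lambda_i u^i (u^i)^T = I_n$, and hence $\sum_i \lambda_i = n$.

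For (i): given $x, y \in K \subset P$, we have $x^T u^i \le 1$ and $y^T u^i \le 1$ for all $i$. Then compute
\[
    x^T y = x^T \Bigl( \sum_{i \in [m]} \lambda_i u^i (u^i)^T \Bigr) y = \sum_{i \in [m]} \lambda_i (x^T u^i)(y^T u^i).
\]
Split each factor as $x^T u^i = 1 - (1 - x^T u^i)$ with $1 - x^T u^i \ge 0$, and similarly for $y$; expanding and using $\sum_i \lambda_i u^i = 0$ and $\sum_i \lambda_i = n$ gives
\[
    x^T y = \sum_{i \in [m]} \lambda_i - \sum_{i \in [m]} \lambda_i (1 - x^T u^i) - \sum_{i \in [m]} \lambda_i (1 - y^T u^i) + \sum_{i \in [m]} \lambda_i (1 - x^T u^i)(1 - y^T u^i).
\]
The first three terms collapse (using $x, y \in \R^n$ and $\sum_i \lambda_i u^i = 0$, so $\sum_i \lambda_i x^T u^i = 0$) to leave $x^T y = -n + \sum_i \lambda_i (1 - x^T u^i)(1 - y^T u^i) \ge -n$, since every summand is a product of nonnegatives. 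Equality forces each product $(1 - x^T u^i)(1 - y^T u^i) = 0$, i.e. $x^T u^i = 1$ or $y^T u^i = 1$ for every $i$, which is exactly the stated condition.

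For (ii): expand $\gauge{x-y}^2 = \gauge{x}^2 + \gauge{y}^2 - 2 x^T y \le \gauge{x}^2 + \gauge{y}^2 + 2n$ using (i), and then bound $\gauge{x}^2 + \gauge{y}^2 \le 2 \max\{\gauge{x}^2, \gauge{y}^2\}$. Equality needs both $x^T y = -n$ and $\gauge{x} = \gauge{y}$, giving the stated condition; conversely these two conditions clearly give equality. For (iii): by Proposition~\ref{prop:john_rounding} (or directly: for $x \in K$, $\gauge{x}^2 = x^T(\sum_i \lambda_i u^i (u^i)^T) x = \sum_i \lambda_i (x^T u^i)^2 \le \sum_i \lambda_i = n$, since $x^T u^i \le 1$ and also $x^T u^i \ge -s_J(K) \ge -n$ so $(x^Tu^i)^2 \le 1$ when... ) we get $\gauge{x} \le n$ for all $x \in K$ — actually the clean bound is $\gauge{x}\le n$ since $(x^Tu^i)^2 \le \max\{1, (x^Tu^i)^2\}$; more carefully, from $\B^n \subset K \subset n\B^n$ via Proposition~\ref{prop:john_rounding} with $s_J(K) \le n$. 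Then combine with (ii) to get $\gauge{x-y} \le \sqrt{2(n + n^2)} = \sqrt{2n(n+1)}$, with equality iff $\gauge{x} = \gauge{y} = n$ and $x^T y = -n$; one then notes that $\gauge{x} = \gauge{y} = n$ together with $x \neq y$ already forces the extremal configuration, or that $x^Ty = -n$ with $\gauge{x}=\gauge{y}=n$ implies $x$ and $y$ are "antipodal enough," so the condition simplifies to $\gauge{x} = \gauge{y} = n$ and $x \neq y$ — checking this last simplification is the one slightly delicate point, requiring the argument that when $\gauge x = \gauge y = n$, the vectors $x, -y \in K$ force (via (i) applied to the pair $x, -y$, or a direct John-decomposition argument) that $x^T(-y) \ge -n$ is automatically tight. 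I expect the main obstacle to be nailing down precisely this equivalence of equality conditions in (iii), i.e. showing that $\gauge{x}=\gauge{y}=n$ and $x \ne y$ is not just necessary but sufficient; everything else is a direct expansion using the defining identities of the John decomposition.
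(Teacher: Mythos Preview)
Your arguments for (i) and (ii) are correct and essentially identical to the paper's: the same identity $0 \le \sum_i \lambda_i(1-x^Tu^i)(1-y^Tu^i) = n + x^Ty$, followed by the obvious expansion of $\gauge{x-y}^2$.

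For (iii), you correctly isolate the one nontrivial point --- showing that $\gauge{x}=\gauge{y}=n$ with $x\ne y$ already forces $x^Ty=-n$ --- but your proposed attack does not work. Applying (i) to the pair $(x,-y)$ is illegal, since $-y$ need not lie in $K$ (nothing makes $K$ symmetric). The paper instead applies (i) to $x$ and $-x/n$: since $\gauge{x}=n$ we have $-x/n\in\bd(\B^n)\subset K$, and $x^T(-x/n)=-n$, so equality holds in (i) and every $u^i$ satisfies $x^Tu^i=1$ or $(-x/n)^Tu^i=1$. By the equality case of Cauchy--Schwarz the second alternative forces $u^i=-x/n$. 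Not all $u^i$ can satisfy $x^Tu^i=1$ (they would lie in a common hyperplane, impossible for a John decomposition), so some $u^i$, say $u^1$, equals $-x/n$. Running the same argument with $y$ in place of $x$, each $u^i$ either equals $-y/n$ or has $y^Tu^i=1$; since $x\ne y$ we have $u^1=-x/n\ne-y/n$, hence $y^Tu^1=1$, i.e., $x^Ty=-n$. That is the missing step.

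(Your aborted direct proof of $\gauge{x}\le n$ via $\gauge{x}^2=\sum_i\lambda_i(x^Tu^i)^2$ indeed breaks down, since $x^Tu^i$ can be as negative as $-n$ and so $(x^Tu^i)^2$ is not bounded by $1$; falling back on Proposition~\ref{prop:john_rounding} is the right call.)
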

\begin{proof}
Let $u^1, \ldots, u^m \in \bd(K) \cap \bd(\B^n)$ and weights $\lambda_1, \ldots, \lambda_m > 0$ form a John decomposition. Then
\[
    0
    \leq \sum_{i \in [m]} \lambda_i (1 - x^T u^i) (1 - y^T u^i)
    = \sum_{i \in [m]} \lambda_i - (x + y)^T \sum_{i \in [m]} \lambda_i u^i + x^T \sum_{i \in [m]} \lambda_i u^i (u^i)^T y
    = n + x^T y.
\]
This already yields (i).
For (ii), use (i) to obtain
\[
    \gauge{x-y}^2
    = \gauge{x}^2 + \gauge{y}^2 - 2 x^T y
    \leq 2 \max \{ \gauge{x}^2, \gauge{y}^2 \} + 2 n
\]
and obviously also the claimed equality case.

Lastly, for the inequality in (iii) we use (ii) and that $K \subset n \B^n$
by Proposition~\ref{prop:john_rounding}.
In the equality case, (ii) already yields
$\gauge{x} = \gauge{y} = n$ and $x \neq y$.
Finally, assume $\gauge{x} = \gauge{y} = n$ and $x \neq y$.
Since $- \frac{x}{n} \in \bd(\B^n) \subset K$,
(i) shows that any $u^i$ satisfies
$x^T u^i = 1$ or $(- \frac{x}{n})^T u^i = 1$.
In the second case, the Cauchy-Schwarz inequality and its equality case show that $u^i = - \frac{x}{n}$.
This must apply for at least one $u^i$, say $u^1$,
as otherwise all $u^i$ would lie in a common hyperplane,
which is impossible for the set of vectors from a John decomposition.
The same arguments apply for $y$ instead of $x$.
Since $x \neq y$ gives
$u^1 = -\frac{x}{n} \neq -\frac{y}{n}$,
it thus follows $y^T u^1 = 1$,
i.e., $x^T y = -n$.
Together with $\gauge{x} = \gauge{y} = n$,
(ii) now shows that equality holds in (iii).
\end{proof}

\begin{remark}
Similar to the observation made in \cite[Remarks]{Ball},
let us point out that Proposition~\ref{prop:john_vector_prop}~(ii) together with Proposition~\ref{prop:john_rounding} provides an easy alternative proof of Theorem~\ref{thm:inner} for $k=1$.
The equality conditions obtained from Proposition~\ref{prop:john_vector_prop}~(ii) for the resulting inequality $\gauge{x-y} \leq \sqrt{2 n (s_J(K)+1)}$ are
\[
    \gauge{x} = \gauge{y} = \sqrt{n s_J(K)}
        \quad \text{and} \quad
    x^T y = -n.
\]
Using $\gauge{x+y}^2 = \gauge{x}^2 + \gauge{y}^2 + 2 x^T y$,
we may equivalently replace these conditions by
\[
    \gauge{x} = \gauge{y} = \sqrt{n s_J(K)}
        \quad \text{and} \quad
    \gauge{\frac{x+y}{2}} = \sqrt{ \frac{n (s_J(K) - 1)}{2}}.
\]
Written in this way, the equality conditions are analogous to the conditions in Theorem~\ref{thm:planar_diam} and can easily be seen to really be equivalent to the conditions in Theorem~\ref{thm:inner}.

Also note that the equality conditions cannot be reduced to $\gauge{x} = \gauge{y} = \sqrt{n s_J(K)}$
and $x \neq y$ for general values of $s_J(K) \in [1,n]$ as Proposition~\ref{prop:john_vector_prop}~(iii) might suggest.
For example, any vertex of the cube $[-1,1]^n$ has Euclidean norm $\sqrt{n}$,
but only antipodal pairs of its vertices are at distance $2 \sqrt{n}$ from each other.
\end{remark}

The last auxiliary result needed takes care of some technical computations.
Recall the definition of $D_s$ in \eqref{eq:Ds}.

\begin{lemma} \label{lem:technical}
Let $s \in (1,2)$ and define
\[
    \xi_s^*
    \coloneqq \sqrt{\frac{D_s^2}{2} - 2}
    = \sqrt{ \frac{s^2+1 + \sqrt{4 (2-s)^2 + (s^2-1)^2}}{2} }
    \in (s,\sqrt{2s}).
\]
Then the function
\[
    f_s \colon (s,\infty) \to \R,
    f_s(\xi) = \xi^2 + \frac{(2-s)^2}{\xi^2-s^2}
\]
attains its unique maximum on $[\xi_s^*,\sqrt{2 s}]$
at $\xi_s^*$ with $f_s(\xi_s^*) = D_s^2 - 5 = s^2 + \sqrt{4 (2-s)^2 + (s-1)^2}$.
\end{lemma}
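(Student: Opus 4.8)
The claim is about the single-variable function $f_s(\xi) = \xi^2 + \frac{(2-s)^2}{\xi^2-s^2}$ on the interval $[\xi_s^*, \sqrt{2s}]$. The natural approach is to substitute $t = \xi^2$, which converts $f_s$ into $g_s(t) = t + \frac{(2-s)^2}{t-s^2}$ on $[(\xi_s^*)^2, 2s] = [\frac{D_s^2}{2}-2, 2s]$. A function of the form $t + \frac{C}{t-a}$ with $C = (2-s)^2 > 0$ (since $s<2$) has derivative $1 - \frac{C}{(t-a)^2}$, so it is strictly decreasing for $t - a \in (0,\sqrt{C})$ and strictly increasing for $t-a > \sqrt{C}$. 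Thus $g_s$ is strictly convex-unimodal on $(s^2,\infty)$ with a minimum at $t = s^2 + (2-s) = s^2-s+2$, hence on any interval to the \emph{left} of this minimizer it is strictly decreasing, and its maximum over such an interval is attained at the left endpoint. So the key arithmetic step is to verify the two inequalities $s^2 < \frac{D_s^2}{2}-2$ (so that $f_s$ is actually defined at $\xi_s^*$, i.e.\ $\xi_s^* > s$) and $2s \le s^2 - s + 2$ (so that the whole interval $[(\xi_s^*)^2, 2s]$ lies at or left of the minimizer of $g_s$); the latter rearranges to $s^2 - 3s + 2 = (s-1)(s-2) \ge 0$, which holds for $s \in (1,2)$. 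I also need $(\xi_s^*)^2 < 2s$, i.e.\ $\xi_s^* < \sqrt{2s}$, for the interval to be nondegenerate — this is part of the asserted membership $\xi_s^* \in (s,\sqrt{2s})$.

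The membership claim $\xi_s^* \in (s,\sqrt{2s})$ itself should be dispatched first, since the rest leans on it. Writing $w \coloneqq \sqrt{4(2-s)^2 + (s^2-1)^2} > 0$, we have $(\xi_s^*)^2 = \frac{s^2+1+w}{2}$. The inequality $(\xi_s^*)^2 > s^2$ is equivalent to $1 + w > s^2$, i.e.\ $w > s^2 - 1$, which is immediate since $w = \sqrt{(s^2-1)^2 + 4(2-s)^2} > \sqrt{(s^2-1)^2} = |s^2-1| = s^2-1$ for $s>1$ (and the $4(2-s)^2$ term is strictly positive for $s<2$). The inequality $(\xi_s^*)^2 < 2s$ is equivalent to $s^2 + 1 + w < 4s$, i.e.\ $w < 4s - s^2 - 1 = -(s-1)(s-3) + \text{(check)}$; more carefully, $w < 3 - (s-1)^2 \cdot(\ldots)$ — I would just square: the right side $4s-s^2-1$ is positive on $(1,2)$ (it equals $-(s^2-4s+1)$ with roots $2\pm\sqrt3$, and $2-\sqrt3 < 1 < 2 < 2+\sqrt3$), so squaring gives the equivalent polynomial inequality $4(2-s)^2 + (s^2-1)^2 < (4s-s^2-1)^2$, which I expect to simplify to something manifestly true like $16(s-1)(s-2)\cdot(\text{negative factor}) $ or a clean product of the linear factors $(s-1)$, $(s-2)$; this is the routine-but-slightly-tedious computation.

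Finally, the value $f_s(\xi_s^*)$: by definition of $D_s$ in \eqref{eq:Ds}, $D_s^2 = s^2 + 5 + w$, so $(\xi_s^*)^2 = \frac{D_s^2}{2} - 2 = \frac{s^2+1+w}{2}$ as used above, and I must show $f_s(\xi_s^*) = (\xi_s^*)^2 + \frac{(2-s)^2}{(\xi_s^*)^2 - s^2} = D_s^2 - 5 = s^2 + w$. Substituting, $(\xi_s^*)^2 - s^2 = \frac{1+w-s^2}{2} = \frac{w-(s^2-1)}{2}$, so $\frac{(2-s)^2}{(\xi_s^*)^2-s^2} = \frac{2(2-s)^2}{w-(s^2-1)}$. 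Rationalizing by $w+(s^2-1)$ and using $w^2 - (s^2-1)^2 = 4(2-s)^2$, this collapses to $\frac{2(2-s)^2\,(w+s^2-1)}{4(2-s)^2} = \frac{w+s^2-1}{2}$. Then $f_s(\xi_s^*) = \frac{s^2+1+w}{2} + \frac{w+s^2-1}{2} = s^2 + w$, as desired. The equivalence $s^2 + w = D_s^2 - 5 = s^2 + \sqrt{4(2-s)^2+(s-1)^2}$ forces me to note the typo/identity $w = \sqrt{4(2-s)^2 + (s^2-1)^2}$ versus $\sqrt{4(2-s)^2 + (s-1)^2}$ — these differ, so I'd present the value in the $w$-form consistent with \eqref{eq:Ds} and flag that the stated closed form uses $(s-1)^2$; assuming the intended quantity is the one in \eqref{eq:Ds}, the proof goes through with $w$ as defined there. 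The main obstacle is purely bookkeeping: keeping the two candidate radicands straight and grinding the one squaring step for $(\xi_s^*)^2 < 2s$; there is no conceptual difficulty, since the monotonicity of $t + C/(t-a)$ does all the real work.
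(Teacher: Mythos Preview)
Your core monotonicity step is wrong. You claim $2s \le s^2 - s + 2$, i.e.\ $(s-1)(s-2) \ge 0$, ``which holds for $s \in (1,2)$''. It does not: for $s \in (1,2)$ we have $(s-1)(s-2) < 0$, so in fact $s^2 - s + 2 < 2s$. In your substituted variable this says the minimizer $t_{\min} = s^2 + (2-s)$ of $g_s(t) = t + \frac{(2-s)^2}{t-s^2}$ lies strictly \emph{inside} the interval $[(\xi_s^*)^2, 2s]$, not to its right. (A quick check at $s = 3/2$: the minimizer is at $t = 2.75$, while $2s = 3$ and $(\xi_s^*)^2 \approx 2.43$.) Hence $g_s$ is not monotone on the interval, and your argument that the maximum must sit at the left endpoint collapses. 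This is exactly why the paper, after locating the unique critical point of $f_s$ at $\xi = \sqrt{s^2 + 2 - s}$, does not try to place the whole interval on one side of it; instead it observes that the maximum on $[\xi_s^*,\sqrt{2s}]$ must be at one of the two endpoints and then verifies the strict inequality $f_s(\sqrt{2s}) < f_s(\xi_s^*)$ by a direct computation (reducing, after squaring, to $4(1-s)(s^2-1)^2/s^2 < 0$). The remark following the lemma even emphasizes that $\xi_s^*$ is \emph{not} generally the maximizer on $[\xi_s^*, 2]$, so the restriction to $\sqrt{2s}$ (via Proposition~\ref{prop:john_rounding}) is genuinely needed.

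The other pieces of your plan are fine and match the paper: the proof that $\xi_s^* \in (s,\sqrt{2s})$ goes exactly as you outline (the squaring step for the upper bound simplifies, in the paper, to $8(2-s)(s-1)(s+1) > 0$), and your rationalization argument for $f_s(\xi_s^*) = s^2 + w$ with $w = \sqrt{4(2-s)^2 + (s^2-1)^2}$ is correct and slightly slicker than the paper's quadratic-root verification. Your flag on the $(s-1)^2$ versus $(s^2-1)^2$ discrepancy is also correct; the paper's own proof uses $(s^2-1)^2$ consistently with the definition of $D_s$, so the displayed $(s-1)^2$ in the lemma statement is a typo.
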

\begin{proof}
We start with $s < \xi_s^* < \sqrt{2s}$.
For the left-hand inequality,
we just observe that $s \in (1,2)$ yields $\sqrt{4 (2 - s)^2 + (s^2-1)^2} > s^2-1$.
Next, $\xi_s^* < \sqrt{2s}$ is equivalent to
\[
    \sqrt{4 (2 - s)^2 + (s^2-1)^2}
    < 2s - (s-1)^2.
\]
Since the right-hand side is positive,
we may square both sides and simplify to
\[
    0 < 8 (2 - s) (s - 1) (s + 1),
\]
which is true by $s \in (1,2)$.

Now, $f_s$ is clearly differentiable in $\xi$ with derivative
\[
    f_s'(\xi)
    = 2\xi - \frac{2\xi (2-s)^2}{(\xi^2-s^2)^2}
    = \frac{ 2\xi \left( (\xi^2-s^2)^2 - (2-s)^2 \right)}{(\xi^2-s^2)^2},
\]
which becomes zero precisely when $\xi=0$ or $\xi^2 = s^2 \pm (2 - s)$.
The only such $\xi$ also satisfying $\xi > s$ is $\xi = \sqrt{s^2+2-s}$.
Is it easy to see that
$f_s'(\xi) < 0$ for $s < \xi < \sqrt{s^2+2-s}$
and $f_s'(\xi) > 0$ for $\xi > \sqrt{s^2+2-s}$.
Thus, any $\xi \in (\xi_s^*,\sqrt{2s})$ satisfies
$f_s(\xi) < \max \{ f_s(\xi_s^*),f_s(\sqrt{2s}) \}$.
To prove the claim,
only $f_s(\sqrt{2s}) < f_s(\xi_s^*)$ is left to show.
We directly compute
\[
    f_s(\sqrt{2 s})
    = 2s + \frac{(2-s)^2}{2s-s^2}
    = 2s + \frac{2-s}{s}
\]
and observe that the claim
$f_s(\xi_s^*) = s^2 + \sqrt{4 (2 - s)^2 + (s^2 - 1)^2}$
is equivalent to
\begin{equation} \label{eq:xs*_def_prop}
    (\xi_s^*)^2 + \frac{(2-s)^2}{(\xi_s^*)^2-s^2}
    = f_{s}(\xi_s^*)
    = 2 (\xi_s^*)^2 - 1.
\end{equation}
The equality from left to right can be rearranged to
\[
    (\xi_s^*)^4 - (s^2 + 1) (\xi_s^*)^2 + 4 s - 4 = 0.
\]
The zeros of the in $t \in \R$ second degree polynomial
$t^2 - (s^2+1) t + 4 s - 4$
are easy to determine,
and it is straightforward to verify that $t = (\xi_s^*)^2$
is indeed one of them.

With both function values computed,
the claimed inequality $f_{s}(\sqrt{2 s}) < f_{s}(\xi_s^*)$
is equivalent to
\[
    \frac{(2 - s) (s^2 + 1)}{s}
    < \sqrt{4 (2 - s)^2 + (s^2 - 1)^2},
\]
which by squaring both sides and rearranging becomes
\[
    0
    > \frac{(2 - s)^2 (s^2 + 1)^2}{s^2} - 4 (2-s)^2 - (s^2-1)^2
    = \frac{(2-s)^2 (s^2-1)^2}{s^2} - (s^2-1)^2
    = \frac{4 (1-s) (s^2-1)^2}{s^2}.
\]
This is again true by $s \in (1,2)$.
\end{proof}

Let us point out that $\xi_s^*$ in Lemma~\ref{lem:technical}
is not in general a global maximum of $f_s$ on the interval
$[\xi_s^*,2]$.
It is therefore crucial for the proof of Theorem~\ref{thm:planar_diam} below
to use Proposition~\ref{prop:john_rounding} in its form given here with the John asymmetry.
Simply using the fact that $K \subset 2 \B^2$ would not be enough to execute the proof in its presented form.

The proof of Theorem~\ref{thm:planar_diam} is split into two parts based on the Euclidean norms of the points in a diametrical pair.
If both points have small enough norm, then Proposition~\ref{prop:john_vector_prop}~(ii) sufficiently upper bounds their distance.
If instead one point has too large norm, then we want to restrict where the other point in the diametrical pair can be.
This is done by using Proposition~\ref{prop:john_vector_prop}~(i),
based on which we can show that there must be a point from a John decomposition in a specific part of the unit circle.
A similar technique to the last part has been used for \cite[Lemma~4.1]{GrKo}.

\begin{proof}[Proof of Theorem~\ref{thm:planar_diam}]
For $s_J(K) \in \{1,2\}$, all claims follow from Theorem~\ref{thm:inner}.
(It can be easily verified that the upper bounds and
equality conditions are the same in Theorems~\ref{thm:inner}~and~\ref{thm:planar_diam} in these two cases.)
Thus, we may assume $s_J(K) \in (1,2)$.
Let $x,y \in K$ with $\gauge{x-y} = D(K)$ such that $\xi \coloneqq \gauge{x} \geq \gauge{y}$.
By Proposition~\ref{prop:john_vector_prop}~(ii), we have
\[
    D(K)
    = \gauge{x-y}
    \leq \sqrt{2 \xi^2 + 4},
\]
so the claimed inequality is true if
\[
    \xi
    \leq \xi^*
    \coloneqq \sqrt{\frac{D_{s_J(K)}^2}{2} - 2}
    = \sqrt{ \frac{s_J(K)^2 + 1 + \sqrt{4 (2 - s_J(K))^2 + (s_J(K)^2 - 1)^2}}{2} }.
\]
Proposition~\ref{prop:john_vector_prop}~(ii) further shows for this case that equality holds in $\gauge{x-y} \leq D_{s_J(K)}$ if and only if
\begin{equation} \label{eq:planar_diam_eq_cond}
    \gauge{x} = \gauge{y} = \xi^*
    \quad \text{and} \quad
    x^T y = - 2.
\end{equation}
Using $\gauge{x+y}^2 = \gauge{x}^2 + \gauge{y}^2 + 2 x^T y$,
we see that this is equivalent to
\[
    \gauge{x} = \gauge{y} = \xi^*
    \quad \text{and} \quad
    \gauge{\frac{x+y}{2}} = \sqrt{ \frac{(\xi^*)^2 - 2}{2} }.
\]
Thus, the equality conditions on $x$ and $y$ follow in this case.
Lastly, an easy computation verifies that Example~\ref{ex:inner_small_asym} provides some
$K \in \CK^2$ in John position with $D(K) = D_{s_J(K)}$.

From now on, we assume $\xi > \xi^*$.
Our goal is to show that
$D(K) = \gauge{x-y} < D_{s_J(K)}$ in this case,
which then completes the proof.

We note that Lemma~\ref{lem:technical} shows
$\xi^* > s_J(K)$,
so in particular $\xi > s_J(K)$.
Moreover, $x \in K \subset \sqrt{2 s_J(K)} \, \B^2$
by Proposition~\ref{prop:john_rounding}
yields $\xi = \gauge{x} \leq \sqrt{2 s_J(K)} < 2$.
Applying an appropriate rotation and then
reflecting everything at the $x$-axis if necessary,
we may assume $x = (\xi,0)$ and that $y$ does not lie below the $x$-axis.
Further, let $u^1, \ldots, u^m \in \bd(K) \cap \bd(\B^2)$ and
$\lambda_1, \ldots, \lambda_m > 0$ form a John decomposition.
With $P \coloneqq \{ v \in \R^2 : v^T u^i \leq 1 \text{ for all } i \in [m] \}$,
we have $K \subset P$ and $\jel(P) = \B^2$.

Next, we define
\[
    \alpha = - \frac{2}{\xi} < -1,
    \quad
    \beta = \frac{2 s_J(K) - \xi^2}{\xi \sqrt{\xi^2 - s_J(K)^2}} \geq 0,
    \quad \text{and} \quad
    p = (\alpha,\beta) \in \R^2 \setminus \B^2.
\]
For the Euclidean unit vectors
\[
    q^1
    \coloneqq \left(
        - \frac{s_J(K)}{\xi},
        - \frac{\sqrt{\xi^2 - s_J(K)^2}}{\xi}
    \right)
    \quad \text{and} \quad
    \tilde{q}^1
    \coloneqq \left(
        - \frac{s_J(K)}{\xi},
        \frac{\sqrt{\xi^2 - s_J(K)^2}}{\xi}
    \right),
\]
we observe that
\[
    1
    = p^T q^1
    = p^T \tilde{q}^1 - 2 \frac{2 s_J(K) - \xi^2}{\xi^2}
    \leq p^T \tilde{q}^1.
\]
Thus, if we define $q^2$ as the other point in
$\bd(\B^2) \setminus \{q^1\}$ satisfying $p^T q^2 = 1$,
then $\tilde{q}^1$ lies on the minor circular arc
connecting $q^1$ and $q^2$.

\begin{figure}[ht]
\def\scale{2}
\def\ds{1.5/\scale pt}
\def\s{1.6}
\def\d{1.7}
\def\a{(-2/\d)}
\def\b{((2*\s-\d^2)/(\d*sqrt(\d^2-\s^2)))}
\def\zx{((sqrt((\d^2-1)*(\d^2-\s^2))-\s)/\d)}
\def\zy{((sqrt(\d^2-\s^2)+\s*sqrt(\d^2-1))/\d)}
\def\psipx{(-\s/\d + sqrt(\d^2-\s^2)/\d * sqrt(\a^2+\b^2-1))}
\def\psipy{(sqrt(\d^2-\s^2)/\d + \s/\d * sqrt(\a^2+\b^2-1))}
\begin{tikzpicture}[scale=\scale]
\draw (0,0) circle(1);
\draw[dashed] circle(\d);

\draw[red] ({(\a+\b*sqrt(\a^2 + \b^2 - 1))/(\a^2 + \b^2)},{(\b-\a*sqrt(\a^2 + \b^2 - 1))/(\a^2 + \b^2)})
		arc ({90-atan2({(\a+\b*sqrt(\a^2 + \b^2 - 1))/(\a^2 + \b^2)},{(\b-\a*sqrt(\a^2 + \b^2 - 1))/(\a^2 + \b^2)})}
		:{90-atan2({-\s/\d},{sqrt(\d^2-\s^2)/\d})}
		:1);

\draw[dotted] ({-2/\d},-\d) -- ({-2/\d},\d);

\draw[dashdotted] ({\a},{\b}) -- ({-\s/\d},{-sqrt(\d^2-\s^2)/\d});
\draw[dashdotted] ({\a},{\b}) -- ({(\a+\b*sqrt(\a^2 + \b^2 - 1))/(\a^2 + \b^2)},{(\b-\a*sqrt(\a^2 + \b^2 - 1))/(\a^2 + \b^2)});
\draw[dashdotted] ({-\d/\s},0) -- ({(sqrt((\d^2-1)*(\d^2-\s^2))-\s)/\d},{(sqrt(\d^2-\s^2)+\s*sqrt(\d^2-1))/\d});
\draw[dashdotted] ({\zx},{\zy}) -- ({(\zx+\zy*sqrt(\d^2 - 1))/\d^2},{(\zy-\zx*sqrt(\d^2 - 1))/\d^2});
\draw[dashdotted] ({\psipx},{\psipy}) -- ({(\psipx+\psipy*sqrt(\a^2+\b^2-1))/(\a^2+\b^2)},{(\psipy-\psipx*sqrt(\a^2+\b^2-1))/(\a^2+\b^2)});

\fill (\d,0) circle(\ds) node[anchor=west] {$x$};
\fill ({-\d/\s},0) circle(\ds) node[anchor=west] {$\frac{- x}{s_J(K)}$};

\fill ({\a},{\b}) circle(\ds) node[anchor=east] {$p$};
\fill ({\psipx},{\psipy}) circle(\ds) node[anchor=east] {$\psi(p)$};

\fill ({\zx},{\zy}) circle(\ds) node[anchor=south] {$z$};

\fill ({(\a+\b*sqrt(\a^2 + \b^2 - 1))/(\a^2 + \b^2)},{(\b-\a*sqrt(\a^2 + \b^2 - 1))/(\a^2 + \b^2)}) circle(\ds)
	node[anchor=north west] {$q^2$};
\fill ({(\psipx+\psipy*sqrt(\a^2+\b^2-1))/(\a^2+\b^2)},{(\psipy-\psipx*sqrt(\a^2+\b^2-1))/(\a^2+\b^2)}) circle(\ds)
	node[anchor=north] {$\psi(q^2)$};
\fill ({-\s/\d},{sqrt(\d^2-\s^2)/\d}) circle(\ds) node[anchor=west] {$\tilde{q}^1$};
\fill ({-\s/\d},{-sqrt(\d^2-\s^2)/\d}) circle(\ds) node[anchor=west] {$q^1$};
\fill ({(\zx+\zy*sqrt(\d^2 - 1))/\d^2},{(\zy-\zx*sqrt(\d^2 - 1))/\d^2}) circle(\ds) node[anchor=south] {$q^3$};

\fill (0,0) circle(\ds) node[anchor=west] {$0$};
\end{tikzpicture}
\caption{
The points defined in the proof of Theorem~\ref{thm:planar_diam} for $s_J(K) = \s$ and $\xi = \d$.
The solid (resp.~dashed) circle has radius $1$ (resp.~$\xi$). The dotted line consist of all points $v \in \R^2$ with $v_1 = - \frac{2}{\xi}$.
One of the $u^i$ must lie on the minor circular arc connecting $\tilde{q}^1$ and $q^2$ (red).
}
\label{fig:planar_diam}
\end{figure}
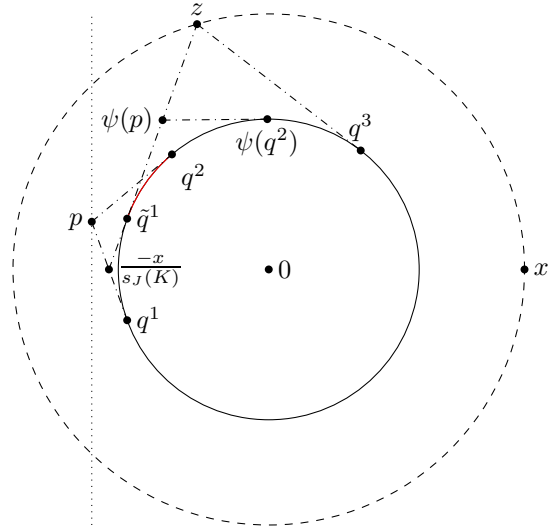

Now, we claim that one of the $u^i$ must lie on
the minor circular arc connecting $\tilde{q}^1$ and $q^2$.
We assume for a proof by contradiction the contrary.
Since $x \in K$,
we have by the definition of $s_J(K)$ that
$\frac{-x}{s_J(K)} = (-\frac{\xi}{s_J(K)},0) \in K$.
The points $q^1$ and $\tilde{q}^1$ are the intersection points of the tangents from $\frac{-x}{s_J(K)}$ to $\B^2$ with $\B^2$.
Thus, all points on the minor circular arc connecting $q^1$ and $\tilde{q}^1$,
except $q^1$ and $\tilde{q}^1$ themselves, lie in $\inte( \{ \frac{-x}{s_J(K)} \} \cup \B^2) \subset \inte(K)$.
Together with the contradiction assumption,
we see that the only point on the minor circular arc connecting $q^1$ and $q^2$ that could be one of the $u^i$ is $q^1$.
Since any $u \in \bd(\B^2)$ outside this circular arc satisfies $u^T p < 1$,
we obtain that any $u^i \neq q^1$ satisfies $(u^i)^Tp < 1$.
Now, let $\varepsilon > 0$ and define $p^\varepsilon \coloneqq p + \varepsilon (p-q^1)$.
There exist only finitely many $u^i$, so $(u^i)^Tp^\varepsilon < 1$ for every $u^i \neq q^1$ if $\varepsilon$ is sufficiently small.
Additionally, $p$ being on the tangent to $\B^2$ at $q^1$ yields $(q^1)^Tp^\varepsilon = 1$.
In summary, $(u^i)^Tp^\varepsilon \leq 1$ for all $u^i$ and thus $p^\varepsilon \in P$.
However, we now have a contradiction to Proposition~\ref{prop:john_vector_prop}~(i)
by $x,p^\varepsilon \in P$ and $\jel(P) = \B^2$ yet $x^T p^\varepsilon < x^T p = -2$.
Altogether, there must exist some $u^i$ on the minor circular arc connecting $\tilde{q}^1$ and $q^2$, say $u^1$.

Next, we observe that
\[
    \gauge{p}^2
    = \alpha^2 + \beta^2
    = \frac{4}{\xi^2} + \frac{4 s_J(K)^2 + \xi^4 - 4 s_J(K) \xi^2}
                                {\xi^2 (\xi^2 - s_J(K)^2)}
    = \frac{4 + \xi^2 - 4 s_J(K)}{\xi^2 - s_J(K)^2}
    = 1 + \frac{(2-s_J(K))^2}{\xi^2 - s_J(K)^2}.
\]
By $\xi > \xi^*$ and \eqref{eq:xs*_def_prop},
we have
\[
    1 + \frac{(2-s_J(K))^2}{\xi^2-s_J(K)^2}
    < 1 + \frac{(2-s_J(K))^2}{(\xi^*)^2-s_J(K)^2}
    = (\xi^*)^2
    < \xi^2,
\]
so $\gauge{p} < \xi$.
Thus, for $\psi \colon \R^2 \to \R^2$ the rotation
that maps $q^1$ to $\tilde{q}^1$,
we obtain that $\psi(p)$ lies on the segment connecting
$\psi(q^1) = \tilde{q}^1$
and the point
\[
    z
    = \left(
        \frac{\sqrt{(\xi^2-1) (\xi^2-s_J(K)^2)} - s_J(K)}{\xi},
        \frac{\sqrt{\xi^2-s_J(K)^2} + s_J(K) \sqrt{\xi^2-1}}{\xi}
        \right)
    \in \bd(\xi \B^2),
\]
where the latter is the intersection point of the line
$\{ v \in \R^2 : v^T \tilde{q}^1 = 1 \}$ and $\bd(\xi \B^2)$
with second coordinate larger than that of $\tilde{q}^1$.
Since $\psi(q^2)$ is a point at which
a tangent from $\psi(p)$ to $\B^2$ meets $\B^2$,
we obtain that $\psi(q^2)$ lies on the minor circular arc
connecting the intersection points of tangents from $z$ to $\B^2$,
which are $\tilde{q}^1$ and some $q^3 \in \bd(\B^2)$.
Furthermore, $q^2$ clearly lies on the minor circular arc
connecting $\tilde{q}^1$ and $\psi(q^2)$.
Altogether, $q^2$ lies on the minor circular arc connecting $\tilde{q}^1$ and $q^3$.
Therefore, independent of the precise position of $u^1$
on the minor circular arc between $\tilde{q}^1$ and $q^2$,
we have $(u^1)^Tp \geq 1$ and $(u^1)^T z \geq 1$.
Consequently, no point in $\{ \lambda z + \mu p \in K : \lambda, \mu \geq 0 \}$
can lie outside $\conv(\{0,p,z\})$,
as it would otherwise have inner product larger than $1$ with $u^1$.

In summary, we have by $\gauge{y} \leq \gauge{x}$
and Proposition~\ref{prop:john_vector_prop}~(i)
that $y$ lies somewhere in
\[
    \left\{ v \in \xi \B^2 : v_1 \geq z_1 \right\}
    \cup \conv(\{p, (p_1,0), z, (z_1,0) \}).
\]
Any point $v$ in the first set satisfies
\[
    \gauge{x-v}^2
    = \gauge{x}^2 + \gauge{v}^2 - 2 x^T v
    \leq \gauge{x}^2 + \xi^2 - 2 x_1 z_1
    = \gauge{x}^2 + \gauge{z}^2 -2 x^T z
    = \gauge{x-z}^2,
\]
so $z$ maximizes the Euclidean distance to $x$
among all points in the first set.
Clearly, one of $z$ and $p$ maximizes the distance to $x$
for the second set.
Finally, we compute
\begin{align*}
    \gauge{x-z}^2
    & = \gauge{x}^2 + \gauge{z}^2 - 2 z^T x
    = 2 (\xi^2 - \sqrt{(\xi^2-1) (\xi^2-s_J(K)^2)} + s_J(K))
    \\
    & = 2 \left( s_J(K)^2 + \sqrt{\xi^2 - s_J(K)^2} \left( \sqrt{\xi^2 - s_J(K)^2} - \sqrt{\xi^2 - 1} \right) + s_J(K) \right),
\intertext{
which, by $1 < s_J(K) < \xi \leq 2$, can be further upper bounded 
}
    & < 2 ( s_J(K)^2 + s_J(K) )
    < 2 s_J(K)^2 + 4
    = s_J(K)^2 + 5 + \sqrt{(s_J(K)^2 - 1)^2}
    < D_{s_J(K)}^2.
\end{align*}
Moreover,
\[
    \gauge{x-p}^2
    = \gauge{x}^2 + \gauge{p}^2 - 2 p^T x
    = \xi^2 + \frac{(2-s_J(K))^2}{\xi^2-s_J(K)^2} + 5,
\]
so Lemma~\ref{lem:technical} (with the notation there) shows by $\xi^* < \xi \leq \sqrt{2 s_J(K)}$ that
\[
    \gauge{x - p}
    = \sqrt{f_{s_J(K)}(\xi) + 5}
    < \sqrt{f_{s_J(K)}(\xi^*) + 5}
    = D_{s_J(K)}.
\]
Altogether, we obtain
\[
    D(K)
    = \gauge{x-y}
    \leq \max \{ \gauge{x-z}, \gauge{x-p} \}
    < D_{s_J(K)},
\]
which is what we needed to show to complete the proof.
\end{proof}

\begin{remark} \label{rem:planar_diam}
The equality case in Theorem~\ref{thm:planar_diam} shows for
$K \in \CK^2$ in John position and $x,y \in K$ that
$\gauge{x-y} = D_{s_J(K)}$
requires $x^T y = -2$
(cf.~\eqref{eq:planar_diam_eq_cond}).
By Proposition~\ref{prop:john_vector_prop}~(i),
this also means that the vectors $u^i$ in a John decomposition for $K$
must all have inner product $1$ with at least one of $x$ and $y$.
In other words, only the points that are obtained as
intersection points of $\B^2$ with
the tangents from $x$ and $y$ to $\B^2$
can occur in a John decomposition for $K$.
This shows that the inner product of $c = \frac{x+y}{2}$
with the $u^i$ can only take the values
\[
\tau_{2,1}(\mu_{2,1}(s_J(K))) \sqrt{\mu_{2,1}(s_J(K))-2} = \frac{\gauge{c}^2}{s_J(K)-1}
\quad \text{and} \quad
-\frac{\sqrt{\mu_{2,1}(s_J(K))-2}}{2 \tau_{2,1}(\mu_{2,1}(s_J(K)))} = \frac{1-s_J(K)}{2}
\]
as mentioned in the discussion above Lemma~\ref{lem:mu_tau_prop}.
One way to see this without further computation is to first observe that this claim is true for the body provided in Example~\ref{ex:inner_small_asym},
where we use Lemma~\ref{lem:mu_tau_prop}~(iv) for the identity for the left-hand value and the computations in the proof of Example~\ref{ex:inner_small_asym} for the right-hand one.
Second, we note that the equality conditions in
Theorem~\ref{thm:planar_diam} show for any $s \in [1,2]$ that
the pair of vectors $x,y$ with $\gauge{x-y} = D_s$
that can be contained in a convex body $K \in \CK^2$
with $\jel(K) = \B^2$ and $s_J(K) = s$
is unique up to rotation around the origin.
Thus, the computation of $c^T u^i$ in the general case
can be reduced to Example~\ref{ex:inner_small_asym}.
\end{remark}

\section{Affine Width-Circumradius- and Diameter-Inradius-Ratios}
\label{sec:aff_ratios}

In this last section, we focus on bounding the volume of $1$-dimensional ellipsoids, i.e., the length of segments, that are contained in convex bodies.
Applying the results from the previous sections, we obtain inequalities for the width-circumradius-ratio $w/R$ and the diameter-inradius-ratio $D/r$ of convex bodies.
An upper bound to the first ratio and a lower bound to the second one,
both based on the Minkowski asymmetry,
have been obtained in \cite[Corollary~6.4]{BrKoSharp} (see also Corollaries~\ref{cor:aff_w_R_ratio}~and~\ref{cor:aff_d_r_ratio} below).
It is in general not possible to give non-trival bounds in the other direction for either ratio by involving only affinely independent parameters.
This is easy to see from affinely transforming any convex body such that its width (and hence inradius) remains bounded but its diameter (and hence circumradius) becomes arbitrarily large..

There are many other examples of ratios between
geometric functionals that can in general be bounded
only trivially due to similar problems with affine transformations.
These issues can often be circumvented by instead asking if
at least some affine transformation exists for every convex body
such that we can obtain meaningful bounds.
Examples of results in this direction include
Ball's reverse isoperimetric inequality \cite{BallRev}
and works on the isodiametric and isominwidth problem
\cite{Be,GoSh}.
A marked difference to the problems considered
here is that the volume of the convex body itself is not involved in our case.
Instead, we work along the lines of Jung, Steinhagen, Bohnenblust, and Leichtweiss by comparing different radial functionals.

Since measuring the length of segments does not inherently require the use of Euclidean distances, it is natural to consider the problems of bounding the $w/R$- and $D/r$-ratios in more general settings.
We therefore first consider the case where $\B^n$ is replaced by an arbitrary gauge body $C \in \CK^n$,
for which we require some heavier notation.
For $X,Y \subset \R^n$, we write $X \subset_t Y$ to mean that some translate of $X$ is a subset of $Y$.
Let the four radial functionals be defined in the standard way by
\begin{alignat*}{2}
    R(K,C) & \coloneqq \inf \{ \rho \geq 0 : K \subset_t \rho C \},
    &
    r(K,C) & \coloneqq \sup \{ \rho \geq 0 : \rho C \subset_t K \},
    \\
    w(K,C) & \coloneqq \inf_{a \in \R^n \setminus \{0\}} 2 \frac{h_K(a) + h_K(-a)}{h_C(a) + h_C(-a)},~
    &
    D(K,C) & \coloneqq \sup_{a \in \R^n \setminus \{0\}} 2 \frac{h_K(a) + h_K(-a)}{h_C(a) + h_C(-a)}.
\end{alignat*}
See, e.g., \cite{BrGorRD,BrKoSharp} for some basic properties of these functionals.

We shall further use the two following notions.
For $K,C \in \CK^n$ we denote their \cemph{Banach-Mazur distance} by
\begin{align*}
    d_{BM}(K,C)
    \coloneqq & \inf \{ \rho \geq 0 : C \subset_t AK \subset_t \rho C, A \in GL_n(\R) \}
\intertext{
and their \cemph{Gr\"unbaum distance} by
}
    d_{GR}(K,C)
    \coloneqq & \inf \{ \vert \rho \vert : C \subset_t AK \subset_t \rho C, A \in GL_n(\R), \rho \in \R \}.
\end{align*}
It is shown in \cite[Theorem~5.1]{GLMP} that
$d_{GR}(K,C) \leq n$.
Obviously, we always have $d_{GR}(K,C) \leq d_{BM}(K,C)$ and equality if one of $K$ and $C$ is symmetric.
For the sake of familiarity, we shall use the more well-known Banach-Mazur distance whenever possible.

\begin{theorem} \label{thm:general_aff_ratios}
For $K,C \in \CK^n$, we have
\[
    \frac{s(K)+1}{s(C)+1} \max \left\{ \frac{s(C)}{s(K)}, 1 \right\}
    \leq \left( \max_{A \in GL_n(\R)} \frac{w(A C,K)}{2 R(A C,K)} \right)^{-1}
    = \min_{A \in GL_n(\R)} \frac{D(A K,C)}{2 r(A K,C)}
    \leq d_{GR}(K,C).
\]
Equality in the lower bound is attained if there exists $\lambda \in [0,1]$ such that $K = C - \lambda C$ or $C = K - \lambda K$.
Equality in the upper bound is attained if both $K$ and $C$ are symmetric or if $C$ is a parallelotope.
In particular, the lower and upper bound are best possible for any prescribed $s(K), s(C) \in [1,n]$ and $d_{GR}(K,C) \in [1,n]$, respectively.
\end{theorem}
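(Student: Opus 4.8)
The plan is to prove the four pieces of the statement — the central equality, the upper bound, the lower bound, and the equality/sharpness claims — in that order, exploiting two elementary polarities valid for all $L,M\in\CK^n$: $w(L,M)\,D(M,L)=4$ and $R(L,M)\,r(M,L)=1$ (both immediate from the support-function/containment definitions), together with the fact that $g(L,M):=D(L,M)/(2r(L,M))$ is invariant under simultaneously applying an element of $GL_n(\R)$ to $L$ and $M$. For the central equality, combining the polarities gives $\frac{w(AC,K)}{2R(AC,K)}=g(K,AC)^{-1}$, and the $GL_n$-invariance gives $g(K,AC)=g(A^{-1}K,C)$; taking the supremum over $A$ and re-indexing $A\mapsto A^{-1}$ yields $\bigl(\max_A\frac{w(AC,K)}{2R(AC,K)}\bigr)^{-1}=\min_A\frac{D(AK,C)}{2r(AK,C)}$. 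A short compactness argument (after normalizing $|\det A|=1$, both ratios degenerate to $0$, resp.\ $\infty$, as $A$ becomes singular) shows the extrema are attained.

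The upper bound is then almost immediate: $D(\cdot,C)$ is translation invariant and monotone under inclusion with $D(\rho C,C)=2|\rho|$ for every $\rho\in\R$ (using $D(-C,C)=2$), so any admissible sandwich $C\subset_t AK\subset_t\rho C$ gives $r(AK,C)\ge 1$ and $D(AK,C)\le 2|\rho|$, hence $\frac{D(AK,C)}{2r(AK,C)}\le|\rho|$; infimizing over admissible pairs gives $\min_A\frac{D(AK,C)}{2r(AK,C)}\le d_{GR}(K,C)$. For equality when $K$ and $C$ are both symmetric, one uses $D(L,C)=2R(L,C)$ for symmetric $L$, so the quantity is $\min_A\frac{R(AK,C)}{r(AK,C)}$, which equals $d_{BM}(K,C)=d_{GR}(K,C)$ by definition of the Banach--Mazur distance. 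When $C$ is a parallelotope, testing $D(\cdot,C)$ against the edge directions of $C$ gives $D(L,C)\ge 2R(L,C)$ for every $L$, so $\frac{D(AK,C)}{2r(AK,C)}\ge\frac{R(AK,C)}{r(AK,C)}\ge d_{BM}(K,C)=d_{GR}(K,C)$, matching the upper bound. The best-possibility for prescribed $d_{GR}(K,C)\in[1,n]$ follows by fixing $C$ to be a parallelotope and letting $K$ vary continuously between $C$ (distance $1$) and a suitable extremal body, using continuity of $d_{BM}(\cdot,C)$ on $\CK^n$.

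For the lower bound I would quote \cite[Corollary~6.4]{BrKoSharp}, which (after dualizing through the central equality) gives $\min_A\frac{D(AK,C)}{2r(AK,C)}\ge\frac{s(K)+1}{s(C)+1}\max\{\tfrac{s(C)}{s(K)},1\}$. To see that $K=C-\lambda C$ (with $C$ placed so its Minkowski centre is the origin, $\lambda\in[0,1]$) attains equality, I would evaluate at $A=I_n$: the identity $D(L,C)=2R(L-L,C-C)$ and $(C-\lambda C)-(C-\lambda C)=(1+\lambda)(C-C)$ give $D(C-\lambda C,C)=2(1+\lambda)$; the containment $\tfrac1{s(C)}(-C)\subset C$ gives $r(C-\lambda C,C)\ge 1+\tfrac{\lambda}{s(C)}$; and, with $\sigma:=\frac{s(C)+\lambda}{1+\lambda s(C)}$, the inequality $h_{C-\lambda C}(a)\le\sigma\,h_{-(C-\lambda C)}(a)$ — which after simplification is just $C\subset s(C)(-C)$ — gives $s(C-\lambda C)\le\sigma\le s(C)$. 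Feeding these three facts (and the decreasing monotonicity of the lower bound in $s(K)$) into the chain
\[ \tfrac{s(K)+1}{s(C)+1}\cdot\tfrac{s(C)}{s(K)}\le\min_{A}\tfrac{D(AK,C)}{2r(AK,C)}\le\tfrac{D(C-\lambda C,C)}{2r(C-\lambda C,C)}\le\tfrac{(1+\lambda)\,s(C)}{s(C)+\lambda}\le\tfrac{s(K)+1}{s(C)+1}\cdot\tfrac{s(C)}{s(K)} \]
forces all estimates to be equalities; the case $C=K-\lambda K$ is symmetric, replacing the inradius estimate by $K+\lambda(-K)\subset(1+\lambda s(K))K$. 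Finally, letting $C$ run over centred bodies with $s(C)=s_2$ and $\lambda$ over $[0,1]$ makes $s(C-\lambda C)$ sweep $[1,s_2]$ continuously, and the mirror construction sweeps $s(K)\ge s(C)$, so every pair $(s(K),s(C))\in[1,n]^2$ is realized with equality.

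I expect the main obstacle to be the equality/sharpness bookkeeping rather than any single inequality. In particular one must handle the Grünbaum (possibly negatively-scaled) sandwich correctly in the non-symmetric case, and verify that the Minkowski-asymmetry bound $s(C-\lambda C)\le\frac{s(C)+\lambda}{1+\lambda s(C)}$ is in fact tight on the families actually used (e.g.\ simplices), so that the realized set of asymmetry pairs genuinely exhausts $[1,n]^2$. Likewise, exhibiting pairs with prescribed $d_{GR}(K,C)$ throughout $[1,n]$, and checking that the extremal diametrical and inradial configurations produced are consistent with the equality characterizations obtained in Sections~\ref{sec:inner}--\ref{sec:planar_diam}, is the part that requires the most care.
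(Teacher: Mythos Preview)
Your approach matches the paper's closely: the central equality via the $w$--$D$ and $R$--$r$ polarities, the upper bound via an optimal Gr\"unbaum sandwich together with $D(\cdot,\sigma C)=D(\cdot,C)$, the equality cases via $D=2R$ for symmetric pairs or parallelotopic $C$ (the paper cites \cite[Theorem~1.5]{BrGorRD} here and uses the simplex--parallelotope pair from \cite[Corollary~5.8]{GLMP} to realise all $d_{GR}$-values), and the lower-bound equality via explicit evaluation at $A=I_n$ for $K=C-\lambda C$ and $C=K-\lambda K$ are all as in the paper.

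Two small points. First, the paper does \emph{not} obtain the lower bound for general $C$ by quoting \cite[Corollary~6.4]{BrKoSharp}; as it is used in Corollaries~\ref{cor:aff_w_R_ratio} and~\ref{cor:aff_d_r_ratio} that result is Euclidean-specific. For arbitrary $C$ the paper instead combines \cite[Theorem~1.1]{BrGo}, which gives $(s(K)+1)\,r(K,C)\le\frac{s(C)+1}{2}\,D(K,C)$, with \cite[Theorem~6.1]{BrKoSharp}, which gives $R(K,C)/r(K,C)\ge s(C)/s(K)$, to obtain both branches of the $\max$; you should replace your citation accordingly. Second, your expressed worry about separately verifying tightness of $s(C-\lambda C)\le\frac{s(C)+\lambda}{1+\lambda s(C)}$ is unnecessary: your own squeezing chain, via strict monotonicity of the lower bound in $s(K)$, already forces $s(K)=\sigma$ exactly (just as the paper concludes), so $\sigma$ automatically sweeps $[1,s(C)]$ as $\lambda$ ranges over $[0,1]$.
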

\begin{proof}
We first observe for $A \in GL_n(\R)$ that the results in \cite{BrKoSharp} show
\[
    \frac{D(A K,C)}{2 r(A K,C)}
    = \frac{2 R(C,A K)}{w(C,A K)}
    = \frac{2 R(A^{-1} C,K)}{w(A^{-1} C,K)}.
\]
This already suffices to prove
\[
    \left( \max_{A \in GL_n(\R)} \frac{w(A C,K)}{2 R(A C,K)} \right)^{-1}
    = \min_{A \in GL_n(\R)} \frac{D(A K,C)}{2 r(A K,C)}.
\]
For the rest of the proof, we shall focus on the right-hand term.

There exist $A \in GL_n(\R)$ and $\sigma \in \{1,-1\}$ such that
\[
    C
    \subset_t A K
    \subset_t d_{GR}(K,C) (\sigma C).
\]
In this case, we have $r(A K,C) = 1$ and
$R(A K,\sigma C) = d_{GR}(K,C)$.
It follows
\[
    \frac{D(A K,C)}{2 r(A K,C)}
    = \frac{D(A K,\sigma C)}{2 r(A K,C)}
    \leq \frac{R(A K,\sigma C)}{r(A K,C)}
    = d_{GR}(K,C),
\]
which proves the upper bound in the theorem.
To prove its claimed equality cases,
we observe that if both $K$ and $C$ are symmetric or if $C$ is a parallelotope,
then we have for any $A \in GL_n(\R)$ that
$D(A K,C) = 2 R(A K,C)$
(cf.~\cite[Theorem~1.5]{BrGorRD} and the discussion preceding it).
Since at least one body is symmetric in these cases,
\[
    \min_{A \in GL_n(\R)} \frac{D(A K,C)}{2 r(A K,C)}
    = \min_{A \in GL_n(\R)} \frac{R(A K,C)}{r(A K,C)}
    = d_{BM}(K,C)
    = d_{GR}(K,C)
\]
is true.
Since any simplex $T \in \CK^n$ and parallelotope $C \in \CK^n$ satisfy $d_{GR}(T,C) = n$ \cite[Corollary~5.8]{GLMP},
it is easy to see that continuously deformining $T$ into $C$
gives for any $d \in [1,n]$ a convex body $K \in \CK^n$ with
$d_{GR}(K,C) = d$.

Toward the lower bound, we note that the inequality
\[
    (s(K)+1) r(K,C)
    \leq s(C) r(K,-C) + R(K,-C)
    \leq \frac{s(C)+1}{2} D(K,-C)
    = \frac{s(C)+1}{2} D(K,C)
\]
is proved in \cite[Theorem~1.1]{BrGo}.
Using \cite[Theorem~6.1]{BrKoSharp},
which in particular states $\frac{R(K,C)}{r(K,C)} \geq \frac{s(C)}{s(K)}$,
we additionally obtain
\[
    \frac{s(C)}{s(K)} (s(K)+1) r(K,C)
    \leq s(C) r(K,C) + R(K,C)
    \leq \frac{s(C)+1}{2} D(K,C).
\]
Rearranging the two above inequalities shows the lower bound in the theorem.

For the claimed equality case of the lower bound,
we first assume that
there exists $\lambda \in [0,1]$ with $C= K - \lambda K$.
Then
\[
    \frac{C-C}{2}
    = \frac{(K - \lambda K) - (K - \lambda K)}{2}
    = \frac{(K-K) + \lambda (K-K)}{2}
    = (1 + \lambda) \frac{K-K}{2}
\]
and consequently
\[
    D(K,C)
    = D \left( K, (1+\lambda) \frac{K-K}{2} \right)
    = \frac{2}{1 + \lambda}.
\]
Moreover, $-K \subset_t s(K) K$ shows
\[
    C
    = K - \lambda K
    \subset_t K + \lambda s(K) K
    = (1 + \lambda s(K)) K
\]
and thus
\[
    r(K,C)
    \geq \frac{1}{1 + \lambda s(K)}.
\]
Lastly, we estimate $s(C)$, for which we claim that
\begin{equation} \label{eq:s(C)_est}
    K - \lambda K
    \subset_t \frac{s(K) + \lambda}{1 + \lambda s(K)} (\lambda K - K).
\end{equation}
This containment is equivalent to
\[
    (1 + \lambda s(K)) (K - \lambda K)
    \subset_t (s(K) + \lambda) (\lambda K - K),
\]
for which it suffices to show
\[
    K - \lambda^2 s(K) K
    \subset_t \lambda^2 K - s(K) K.
\]
Now,
\[
    K - \lambda^2 s(K) K
    = \lambda^2 K + (1-\lambda^2) K - \lambda^2 s(K) K
    \subset_t \lambda^2 K - (1-\lambda^2) s(K) K - \lambda^2 s(K) K
\]
verifies \eqref{eq:s(C)_est}.
This yields
\[
    s(C)
    \leq \frac{s(K) + \lambda}{1 + \lambda s(K)}.
\]
Altogether, we obtain
\[
    \frac{D(K,C)}{2 r(K,C)}
    \leq \frac{\frac{2}{1+\lambda}}{\frac{2}{1+\lambda s(K)}}
    = \frac{s(K)+1}{(s(K)+1) \frac{1+\lambda}{1+\lambda s(K)}}
    = \frac{s(K)+1}{\frac{s(K)+\lambda}{1+\lambda s(K)} + 1}
    \leq \frac{s(K)+1}{s(C)+1}.
\]
By the lower bound in the theorem,
we must have equality from left to right
and thus equality in the lower bound.
Additionally, we must have
\[
    s(C)
    = \frac{s(K) + \lambda}{1 + \lambda s(K)},
\]
where it is clear that the right-hand side takes any value between $1$ and $s(K)$ for appropriate $\lambda \in [0,1]$.

Now, assume that $K \coloneqq C - \lambda C$ for some $\lambda \in [0,1]$.
Then $-C \supset_t \frac{1}{s(C)} C$ implies
\[
    K
    = C - \lambda C
    \supset_t C + \frac{\lambda}{s(C)} C
    = \left( 1 + \frac{\lambda}{s(C)} \right) C,
\]
so
\[
    r(K,C) \geq 1 + \frac{\lambda}{s(C)}.
\]
Moreover, the above computations yield
\[
    D(K,C)
    = 2 (1 + \lambda)
\]
and
\[
    s(K)
    = \frac{s(C) + \lambda}{1 + \lambda s(C)} \in [1,s(C)].
\]
Put together, we obtain
\[
    \frac{D(K,C)}{2 r(K,C)}
    \leq \frac{2 (1+\lambda)}{2 (1+\frac{\lambda}{s(C)})}
    = s(C) \frac{1+\lambda}{s(C) + \lambda}
    = \frac{s(C)}{s(K) (s(C)+1)} \frac{(s(C)+1) (1+\lambda)}{1 + \lambda s(C)}
    = \frac{s(C) (s(K)+1)}{s(K) (s(C)+1)}.
\]
This concludes the proof.
\end{proof}

\begin{remark}
Let us briefly consider the absolute bounds obtained from Theorem~\ref{thm:general_aff_ratios}.
The lower bound in the theorem can be written as
\[
    \frac{(s(K)+1) s(C)}{(s(C)+1) \min\{ s(K),s(C) \} }
    = \frac{s(C) s(K) + s(C)}{s(C) \min\{ s(K),s(C) \} + \min\{ s(K),s(C) \}}.
\]
Therefore, the theorem yields in particular the absolute bounds
\begin{equation} \label{eq:general_aff_ratios_absolute bounds}
    1
    \leq \left( \max_{A \in GL_n(\R)} \frac{w(A C,K)}{2 R(A C,K)} \right)^{-1}
    = \min_{A \in GL_n(\R)} \frac{D(A K,C)}{2 r(A K,C)}
    \leq n,
\end{equation}
both of which are best possible.

The proof of Theorem~\ref{thm:general_aff_ratios} shows that $\frac{s(C) + d_{BM}(K,C)}{s(C)+1}$ is also a valid lower bound, which tightens the lower bound in the theorem in case $s(C) \geq s(K)$.
Consequently, equality is attained in the lower bound in \eqref{eq:general_aff_ratios_absolute bounds} if and only if $K$ and $C$ are affinely equivalent.

Equality in the upper bound in \eqref{eq:general_aff_ratios_absolute bounds} requires $d_{GR}(K,C) = n$.
It is conjectured in \cite{HuNa} that the latter is true only if one of $K$ and $C$ is a simplex.
The conjecture has been partially verified:
in \cite{HuNa} under the additional assumption that one of $K$ and $C$ is smooth or strictly convex,
and in \cite{Ko2d} for the planar case when one of $K$ and $C$ is symmetric.
We therefore conjecture that equality holds in the upper bound in \eqref{eq:general_aff_ratios_absolute bounds}
only if one of $K$ and $C$ is a simplex.
Lastly, note that the Euclidean case (Proposition~\ref{prop:ball_thm}) already shows that one of $K$ and $C$ being a simplex is not always sufficient for equality in the upper bound in
\eqref{eq:general_aff_ratios_absolute bounds}.
\end{remark}

We return to the $w/R$- and $D/r$-ratios in the Euclidean case.
It turns out that the John position already allows us to establish tight bounds,
partially better than those in Theorem~\ref{thm:general_aff_ratios}.
The first corollary is obtained from combining Theorem~\ref{thm:outer} with \cite[Corollary~6.4]{BrKoSharp}.

\begin{corollary} \label{cor:aff_w_R_ratio}
Let $K \in \CK^n$.
Then
\[
    \frac{1}{\sqrt{n}}
    \leq \max_{A \in GL_n(\R)} \frac{w(AK)}{2 R(AK)}
    \leq \begin{cases}
        \min \{ \frac{\sqrt{n}}{s(K)}, \frac{s(K)+1}{2 s(K)} \}, & \text{if $n$ odd},
        \\
        \min \{ \frac{n+1}{s(K) \sqrt{n+2}}, \frac{s(K)+1}{2 s(K)} \}, & \text{if $n$ even}.
    \end{cases}
\]
The upper bound is tight for any prescribed $s(K) \in [1,n]$.
The lower bound is attained for cross-polytopes and parallelotopes in all dimensions,
as well as for simplices in odd dimensions.
\end{corollary}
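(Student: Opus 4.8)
The plan is to establish the lower bound, the upper bound, and the two attainment claims in turn, using Theorem~\ref{thm:outer} together with Proposition~\ref{prop:k-rad} for everything concerning the lower bound and the affine radii estimates of \cite{BrKoSharp} for the upper bound. Note first that $\frac{w(AK)}{2R(AK)} = \frac{R_1(AK)}{R(AK)}$ is unaffected by post-composing $A$ with a dilation, an orthogonal map, or a translation, so for the lower bound it suffices to produce one good $A$: taking $A$ with $\lel(AK) = \B^n$ gives $R(AK) = 1$, and applying Theorem~\ref{thm:outer} with $k=1$, $F$ the line through the origin in a minimal-width direction of $AK$, and $E = \op{AK}{F}$ (a segment of length $w(AK)$) yields $w(AK) = \vol_1(E) \ge \vol_1(\sqrt{1/n}\,\B^1) = 2/\sqrt n$. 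Hence $\max_A \frac{w(AK)}{2R(AK)} \ge 1/\sqrt n$.

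For the upper bound I would, for an arbitrary $A \in GL_n(\R)$, split
\[
    \frac{w(AK)}{2R(AK)} = \frac{w(AK)}{2r(AK)}\cdot\frac{r(AK)}{R(AK)}.
\]
By \cite[Theorem~6.1]{BrKoSharp} applied with gauge body $AK$ and $C = \B^n$ (using $r(\B^n,AK) = 1/R(AK)$, $R(\B^n,AK) = 1/r(AK)$, and affine invariance of the Minkowski asymmetry), $\frac{r(AK)}{R(AK)} \le \frac{1}{s(K)}$. The first factor is bounded by \cite[Corollary~6.4]{BrKoSharp}, which gives $\frac{w(AK)}{2r(AK)} \le \min\{\frac{s(K)+1}{2},\,\theta_n\}$, where $\theta_n$ is the value of $w/(2r)$ of a regular $n$-simplex in John position; by Proposition~\ref{prop:k-rad} and the fact that the regular simplex has inradius $1/n$ times its circumradius, $\theta_n = \sqrt n$ for $n$ odd and $\theta_n = \frac{n+1}{\sqrt{n+2}}$ for $n$ even. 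Multiplying by $1/s(K)$ and taking the infimum over $A$ gives exactly the stated upper bound.

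For tightness of the upper bound at a prescribed $s(K) \in [1,n]$, I would take $K = \B^n$ when $s(K) = 1$ (then $\max_A \frac{w(AK)}{2R(AK)} = 1 = \frac{s(K)+1}{2s(K)}$, the active term), a regular simplex when $s(K) = n$ (then Proposition~\ref{prop:k-rad} shows that in Loewner position $\frac{w(K)}{2R(K)}$ already equals $\min\{\frac{n+1}{2n},\,\theta_n/n\}$), and for intermediate asymmetries a Hausdorff-continuous family of bodies interpolating between these — obtained, e.g., by an affine/polar adaptation of the bodies constructed in Section~\ref{sec:inner} — along which both inequalities used above become equalities; one also keeps track of which term of the minimum is active (the crossover being $s(K) = 2\sqrt n - 1$ for $n$ odd).

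For attainment of the lower bound, regular simplices in odd dimensions are already covered, since $\min\{\sqrt n/n,(n+1)/(2n)\} = 1/\sqrt n$ makes the lower and upper bounds coincide there (in even dimensions the regular simplex does not attain it). Cross-polytopes and parallelotopes, however, lie outside the reach of the corollary's upper bound ($s(K) = 1$ makes it $1$), so I would argue directly, using translation invariance of $w$ and $R$ to make the affine image origin-symmetric. An origin-symmetric parallelotope has the form $Q = \{x : |a_i^T x| \le h_i,\ \|a_i\| = 1\}$, for which a short computation gives $r(Q) = \min_i h_i = w(Q)/2$, while the vertices of $Q$ are $A^{-1}(\varepsilon_1 h_1,\dots,\varepsilon_n h_n)^{T}$ ($\varepsilon \in \{-1,1\}^n$, $A$ the matrix with rows $a_i$); averaging the squared vertex norms over $\varepsilon$ gives
\[
    R(Q)^2 \ge \sum_i h_i^2\,\|A^{-1}e_i\|^2 \ge (\min_i h_i)^2\operatorname{tr}((A^TA)^{-1}) \ge n\,(\min_i h_i)^2 = n\,r(Q)^2,
\]
using $\operatorname{tr}(M)\operatorname{tr}(M^{-1}) \ge n^2$ and $\operatorname{tr}(A^TA) = n$. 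Thus $\frac{w(Q)}{2R(Q)} = \frac{r(Q)}{R(Q)} \le 1/\sqrt n$, which together with the lower bound forces equality for parallelotopes; the cross-polytope case follows by polarity, since an origin-symmetric cross-polytope equals $Q^\circ$ for an origin-symmetric parallelotope $Q$ and $\frac{w(Q^\circ)}{2R(Q^\circ)} = \frac{2r(Q)}{D(Q)} = \frac{r(Q)}{r_1(Q)}$, which the same averaging ($D(Q)/2$ equals the largest vertex norm, which is $\ge \sqrt n\,r(Q)$) bounds by $1/\sqrt n$. I expect this last point to be the main obstacle: the attainment of the lower bound by cross-polytopes and parallelotopes does not follow from the displayed inequalities but requires the separate matrix-averaging estimate together with the polar correspondence between these two polytope classes.
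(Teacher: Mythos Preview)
Your argument for the lower bound (Loewner position plus Theorem~\ref{thm:outer} with $k=1$) and for simplices in odd dimensions (the two bounds coincide at $s(K)=n$) is exactly what the paper does.

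For the upper bound and its tightness the paper simply invokes \cite[Corollary~6.4]{BrKoSharp}; that reference already contains the full inequality together with the tightness for every prescribed $s(K)\in[1,n]$. Your factorisation $\frac{w}{2R}=\frac{w}{2r}\cdot\frac{r}{R}$ and the proposed interpolating families are therefore unnecessary, and the latter is in any case only sketched.

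The substantive difference is the attainment of the lower bound by cross-polytopes and parallelotopes. Your matrix-averaging argument (average the squared vertex norms of a parallelotope $Q$ over the $2^n$ sign choices, then use $\operatorname{tr}(A^TA)\,\operatorname{tr}((A^TA)^{-1})\ge n^2$) is correct and yields $R(Q)\ge\sqrt{n}\,r(Q)=\sqrt{n}\,w(Q)/2$ for every parallelotope, and the passage to cross-polytopes via polarity is fine since for origin-symmetric $Q$ one has $w(Q^\circ)/(2R(Q^\circ))=r(Q)/R(Q)$. The paper instead applies Theorem~\ref{thm:general_aff_ratios}: taking the polytope in the role of the body $C$ there and $\B^n$ as the gauge, the equality case for two symmetric bodies gives $\bigl(\max_A \frac{w(AC)}{2R(AC)}\bigr)^{-1}=d_{GR}(\B^n,C)=d_{BM}(\B^n,C)=\sqrt{n}$. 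So what you flagged as the main obstacle is handled in one line once Theorem~\ref{thm:general_aff_ratios} is available; your computation is an independent, more elementary verification that avoids that machinery.
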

\begin{proof}
The upper bound and its tightness are obtained in
\cite[Corollary~6.4]{BrKoSharp}.
The lower bound follows from Theorem~\ref{thm:outer}.
Since upper and lower bound coincide if $n$ is odd and $s(K)=n$,
we have equality in the lower bound for simplices
in odd dimensions.
Lastly, Theorem~\ref{thm:general_aff_ratios} yields equality in the lower bound for cross-polytopes and parallelotopes
in any dimension $n$ by the well-known fact that their Banach-Mazur distance to $\B^n$ is equal to $\sqrt{n}$.
\end{proof}

Let us point out for the family of bodies from Theorem~\ref{thm:outer} that equality in the inequality in that theorem is not necessarily enough to obtain equality in the lower bound in the corollary above.
In general, other affine transformations unrelated to the John position may achieve larger $w/R$-ratios.

Let us further mention that
the lower bound in the corollary is not sharp for simplices in even dimensions.
It is shown in \cite{Al} that regular simplices $T$ uniquely maximize the $w/R$-ratio among all simplices with
\[
    \max_{A \in GL_n(\R)} \frac{w(AT)}{2 R(AT)}
    = \frac{n+1}{n \sqrt{n+2}}
    > \frac{1}{\sqrt{n}}.
\]
This suggests that one may sharpen the lower bound in this case by involving an asymmetry measure.

Now, we turn to the $D/r$-ratio.
First, we again collect known results from
Theorems~\ref{thm:inner}~and~\ref{thm:planar_diam}
as well as \cite[Corollary~6.4]{BrKoSharp}.

\begin{corollary} \label{cor:aff_d_r_ratio}
Let $K \in \CK^n$.
Then
\[
    \max \left\{ s(K) \sqrt{\frac{n+1}{2n}}, \frac{s(K)+1}{2} \right\}
    \leq \min_{A \in GL_n(\R)} \frac{D(AK)}{2 r(AK)}
    \leq \begin{cases}
        \frac{D_{s_J(K)}}{2}, & \text{if $n=2$},
        \\
        \sqrt{\frac{n (s_J(K)+1)}{2}}, & \text{else}.
    \end{cases}
\]
The lower bound is tight for any prescribed $s(K) \in [1,n]$.
The upper bound is tight for $s_J(K) \in \{1,n\}$.
\end{corollary}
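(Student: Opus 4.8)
The plan is to split the four assertions of the corollary into two essentially independent halves. The lower bound $\max\{s(K)\sqrt{(n+1)/(2n)},(s(K)+1)/2\}\le\min_{A\in GL_n(\R)}\frac{D(AK)}{2r(AK)}$ together with its tightness for every prescribed $s(K)\in[1,n]$ is exactly the content of \cite[Corollary~6.4]{BrKoSharp}, so I would simply invoke it; the summand $(s(K)+1)/2$ can alternatively be read off from Theorem~\ref{thm:general_aff_ratios} applied with $C=\B^n$, since there $s(\B^n)=1$. Everything that genuinely uses the new results of this paper is therefore the upper bound and the two tightness statements at the extremal John asymmetries.

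For the upper bound I would fix an affine image $K'$ of $K$ with $\jel(K')=\B^n$. Since $\jel$ is affinely equivariant we have $s_J(K')=s_J(K)$, and since $D$ and $r$ are translation invariant it suffices to bound $\frac{D(K')}{2r(K')}$. From $\B^n\subset K'$ we get $r(K')\ge1$, and as $\B^n$ is the volume-maximal inscribed ellipsoid of $K'$ no dilate $\rho\B^n$ with $\rho>1$ can be translated into $K'$, so $r(K')=1$. A longest chord of $K'$ is a $1$-ellipsoid contained in $K'$, so Theorem~\ref{thm:inner} with $k=1$ — using $s_J(K)\le n$, which makes the minimum there equal $(s_J(K)+1)/2$ — gives $D(K')\le2\sqrt{n(s_J(K)+1)/2}$, hence $\min_{A}\frac{D(AK)}{2r(AK)}\le\sqrt{n(s_J(K)+1)/2}$. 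For $n=2$ I would instead apply Theorem~\ref{thm:planar_diam} to $K'$, getting $D(K')\le D_{s_J(K')}=D_{s_J(K)}$ and hence the sharper bound $D_{s_J(K)}/2$.

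For tightness at $s_J(K)=n$ I would take $K$ to be any simplex, so that $s_J(K)=s(K)=n$. The lower bound then evaluates to $\max\{\sqrt{n(n+1)/2},(n+1)/2\}=\sqrt{n(n+1)/2}$ (the first term dominates for $n\ge1$), and the upper bound just proved evaluates to $\sqrt{n(n+1)/2}$ as well — for $n=2$ one checks $D_2/2=\sqrt3=\sqrt{2(2+1)/2}$ — so $\min_{A}\frac{D(AK)}{2r(AK)}=\sqrt{n(n+1)/2}$ is forced. For tightness at $s_J(K)=1$ I would take $K$ to be a regular cross-polytope or cube; then $s_J(K)=1$ and $K$ is symmetric, so $D(AK)=2R(AK)$ for every $A\in GL_n(\R)$ (as used in the proof of Theorem~\ref{thm:general_aff_ratios}), whence $\min_{A}\frac{D(AK)}{2r(AK)}=\min_{A}\frac{R(AK)}{r(AK)}=d_{BM}(K,\B^n)=\sqrt n$; this matches the upper bound at $s_J(K)=1$, namely $\sqrt{n(1+1)/2}=\sqrt n$ (respectively $D_1/2=\sqrt2$ when $n=2$).

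The main obstacle I anticipate is the tightness at $s_J(K)=1$: there the corollary's lower and upper bounds do \emph{not} coincide (the lower bound is only $1$ for symmetric bodies, the upper bound is $\sqrt n$), so one cannot sandwich the affine ratio between them and must instead exhibit a concrete symmetric body realizing the upper bound — which is precisely where the classical value $d_{BM}(K,\B^n)=\sqrt n$ for the cross-polytope and the cube enters, via the identity $D(AK)=2R(AK)$ for symmetric bodies. A secondary, purely bookkeeping difficulty is keeping the $n=2$ formulas in terms of $D_1$ and $D_2$ consistent with the general formula $\sqrt{n(s_J(K)+1)/2}$ at $s_J(K)\in\{1,n\}$; these are routine numerical verifications but have to be carried out explicitly.
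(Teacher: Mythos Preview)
Your proposal is correct and follows essentially the same approach as the paper: the lower bound and its tightness are deferred to \cite[Corollary~6.4]{BrKoSharp}, the upper bound is obtained by putting $K$ in John position and applying Theorems~\ref{thm:inner} and~\ref{thm:planar_diam}, tightness at $s_J(K)=n$ is deduced from the coincidence of the two bounds for simplices, and tightness at $s_J(K)=1$ is obtained for the cross-polytope or cube via $d_{BM}(K,\B^n)=\sqrt{n}$ and the identity $D(AK)=2R(AK)$ for symmetric $K$ (which is precisely how the paper invokes Theorem~\ref{thm:general_aff_ratios}). Your write-up is simply more explicit about the intermediate steps, in particular the verification that $r(K')=1$ in John position and the numerical checks for $D_1$ and $D_2$.
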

\begin{proof}
The lower bound and its tightness follow from
\cite[Corollary~6.4]{BrKoSharp}.
The upper bound is obtained in
Theorems~\ref{thm:inner}~and~\ref{thm:planar_diam}.
Since upper and lower bound coincide if $s_J(K)=s(K)=n$,
which applies if $K$ is a simplex,
we have tightness of the upper bound for $s_J(K)=n$.
Lastly, Theorem~\ref{thm:general_aff_ratios} yields equality in the upper bound for cross-polytopes and parallelotopes,
i.e., tightness for $s_J(K) = 1$.
\end{proof}

Let us point out that the above upper bound is by Theorem~\ref{thm:inner}
not tight for $n \geq 3$ and $s_J(K) \in (1,1+\frac{2}{n})$.
Furthermore, the tightness in Theorems~\ref{thm:inner}~and~\ref{thm:planar_diam}
is again in general not enough
to prove tightness in the upper bound
since other affine transformations unrelated to the John position
could provide a better upper bound on the $D/r$-ratio.

The remainder of this section is devoted to a more detailed study
of the absolute upper bound following from
Corollary~\ref{cor:aff_d_r_ratio}.
The inequality itself could of course also be obtained already from Proposition~\ref{prop:ball_thm}.
The novel part is the characterization of the equality case.

\begin{theorem} \label{thm:aff_d_r_ratio}
Let $K \in \CK^n$.
Then
\[
    \min_{A \in GL_n(\R)} \frac{D(AK)}{2r(AK)}
    \leq \sqrt{\frac{n (n+1)}{2}},
\]
with equality if and only if $K$ is a simplex.
\end{theorem}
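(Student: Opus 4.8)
The plan is to obtain the inequality essentially for free from Ball's bound, the ``if'' part of the equality statement from Corollary~\ref{cor:aff_d_r_ratio}, and the ``only if'' part by combining the equality case of Theorem~\ref{thm:inner} with the assumed minimality of $\frac{D}{2r}$ over affine images.

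First I would fix $A_0\in GL_n(\R)$ with $\jel(A_0K)=\B^n$. Then $r(A_0K)=1$: a ball of radius larger than $1$ inside $A_0K$ would be an inscribed ellipsoid of volume exceeding $\vol_n(\jel(A_0K))$, while $\B^n\subset A_0K$ gives $r(A_0K)\ge1$. A diametrical segment of $A_0K$ is a $1$-ellipsoid contained in $A_0K$, so Proposition~\ref{prop:ball_thm} (with $k=1$) gives $D(A_0K)\le 2\sqrt{\tfrac{n(n+1)}{2}}=\sqrt{2n(n+1)}$, whence $\min_A\tfrac{D(AK)}{2r(AK)}\le\tfrac{D(A_0K)}{2}\le\sqrt{\tfrac{n(n+1)}{2}}$. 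If moreover $K$ is a simplex, then $s(K)=n$, and the lower bound of Corollary~\ref{cor:aff_d_r_ratio} (from \cite{BrKoSharp}) yields $\min_A\tfrac{D(AK)}{2r(AK)}\ge n\sqrt{\tfrac{n+1}{2n}}=\sqrt{\tfrac{n(n+1)}{2}}$, so equality holds.

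For the converse, assume $\min_A\tfrac{D(AK)}{2r(AK)}=\sqrt{\tfrac{n(n+1)}{2}}$ and put $K$ in John position, writing $K_0$ for the result. Then all inequalities above are tight, so $D(K_0)=\sqrt{2n(n+1)}$ and every diametrical segment of $K_0$ realises equality in Theorem~\ref{thm:inner} with $k=1$. Comparing the two candidates inside the minimum there forces $\tfrac{s_J(K_0)+1}{2}\ge\tfrac{n+1}{k+1}=\tfrac{n+1}{2}$, hence $s_J(K_0)=n$; by Proposition~\ref{prop:john_rounding} we also get $K_0\subset n\B^n$. Reading off the equality characterisation of Theorem~\ref{thm:inner} (or, equivalently, using Proposition~\ref{prop:john_vector_prop}~(i) as in the remark following it), every diametrical pair $x,y\in K_0$ satisfies $\gauge{x}=\gauge{y}=n$ and $x^Ty=-n$, and every unit vector $u$ in a John decomposition of $K_0$ satisfies $x^Tu=1$ or $y^Tu=1$; equivalently, $u\in\{-\tfrac{x}{n},-\tfrac{y}{n}\}$ or $u\perp(x-y)$.

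To finish I would use minimality to rule out non-simplices. Let $W\subset\R^n$ be the linear span of the diametrical directions $x-y$ of $K_0$. If $W\neq\R^n$, take the linear map $A_t$ ($0<t<1$) scaling $W$ by $t$ and fixing $W^\perp$: every diametrical segment of $K_0$ is mapped to a segment of length $t\sqrt{2n(n+1)}$, and for $t$ near $1$ no segment in $A_tK_0$ is longer, so $D(A_tK_0)=t\sqrt{2n(n+1)}$. Writing $P=\bigcap_iH_{(u^i,1)}^\le$ for a polytope from a John decomposition (so $\B^n\subset K_0\subset P$), the facets of $P$ with normal in $W^\perp$ stay at distance $1$ from the origin under $A_t$ while the others move strictly closer, and since $\sum_i\lambda_iu^i(u^i)^T=I_n$ forbids all $u^i$ from lying in $W^\perp$, a short computation gives an explicit $\rho(t)>t$ with $r(A_tK_0)\ge\rho(t)$ (re-centring the inscribed ball if some $u^i$ happens to lie in $W$). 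Then $\tfrac{D(A_tK_0)}{2r(A_tK_0)}\le\tfrac{t\sqrt{2n(n+1)}}{2\rho(t)}<\sqrt{\tfrac{n(n+1)}{2}}$, contradicting minimality; hence $W=\R^n$. The remaining step is to show that a non-simplex in John position with the rigid incidence structure above cannot have diametrical directions spanning all of $\R^n$: a body whose John ellipsoid is $\B^n$ cannot be enlarged without losing a contact point, so an extremal $K_0$ must arise from a simplex by intersecting with halfspaces avoiding $\inte(\B^n)$, which confines its diametrical pairs to a single facet and keeps $W$ proper unless $K_0$ is that simplex. I expect this last step to be the main obstacle: the equality conditions of Theorem~\ref{thm:inner} alone do not suffice (they hold also for suitable corner-truncations of a simplex), so the argument must genuinely use that the John position of $K$ simultaneously minimises $\tfrac{D}{2r}$ among all affine images.
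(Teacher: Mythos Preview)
Your inequality and the ``if'' direction are correct. The ``only if'' direction has the right overall strategy---perturb away from John position if $K_0$ is not a simplex---but both branches of your case split contain genuine gaps.

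For $W\neq\R^n$: neither your diameter nor your inradius estimate goes through. Since $K_0\subset P$, facet distances of $A_tP$ yield only an \emph{upper} bound on $r(A_tK_0)$, not the lower bound you need; all you actually have is $r(A_tK_0)\ge t$ from $t\B^n\subset A_t\B^n\subset A_tK_0$. Your claim $D(A_tK_0)=t\sqrt{2n(n+1)}$ is equally unjustified: segments with a nonzero $W^\perp$-component are not shortened in proportion to $t$, and you have not excluded near-diametrical pairs whose $A_t$-image exceeds $tD(K_0)$ for every $t<1$. With only $D(A_tK_0)\le D(K_0)$ and $r(A_tK_0)\ge t$ there is no way to conclude that the ratio strictly decreases. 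The paper proceeds differently: it stretches $W^\perp$ by a factor $\alpha>1$ about the centroid $c$ of the regular $k$-simplex $T$ formed by the norm-$n$ points of $K_0$, so the edges of $T$ (and hence all diametrical pairs) are fixed by $A$. Lemma~\ref{lem:aff_d_r_ratio_reduc}, which genuinely requires the translation by $(1-\alpha)c$, then gives $\B^n\subset AK_0$ with all touching points in an open halfspace, whence $r(AK_0)>1$. The diameter is controlled by first enclosing $K_0$ in an auxiliary polytope $Q$ with $\jel(Q)=\B^n$ and $\bd(Q)\cap\bd(n\B^n)=\bd(K_0)\cap\bd(n\B^n)$: vertex pairs of $Q$ with at least one vertex of norm $<n$ then have distance at most some $M<D(K_0)$ by Proposition~\ref{prop:john_vector_prop}~(iii), and choosing $\alpha=D(K_0)/M$ keeps $D(AK_0)\le D(AQ)\le D(K_0)$.

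For $W=\R^n$: your sketch is not an argument, but this case is in fact the easy one. By Proposition~\ref{prop:john_vector_prop}~(iii), \emph{any} two norm-$n$ points of $K_0$ are at distance $\sqrt{2n(n+1)}$, so they form the vertex set of a regular $k$-simplex $T$ and $W$ is precisely its $k$-dimensional linear span. Hence $W=\R^n$ forces $k=n$; writing $T=\bigcap_{i}H_{(-x^i,n)}^{\leq}$ for its vertices $x^i\in K_0$ and applying Proposition~\ref{prop:john_vector_prop}~(i) gives $K_0\subset T$, so $K_0=T$. The real obstacle is the perturbation for $k<n$, not this step.
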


Before we give the proof,
we justify calling the above result a theorem.
To do so, we highlight some of its interplay with Jung's inequality.
Together, they yield a strengthening of the fact that simplices are the only convex bodies $K \in \CK^n$ with equality in the inequality $d_{BM}(K,\B^n) \leq n$.
This inequality plays a central role for the theory of Banach-Mazur distances, so it comes to no surprise that it has received significant attention in the literature:
Sharpenings of the inequality are obtained in \cite{BeFr,BrKoSharp}.
Its unique equality case has first been established in \cite{Le2}, rediscovered in \cite{Pa}, and broadly generalized in \cite{HuNa}.
Even a stability version is given in \cite{Ko}.
A new proof of the unique equality case is presented in the following.
Using Jung's inequality on the $R/D$-ratio,
we have for any $K \in \CK^n$ that
\begin{align}
\begin{split} \label{eq:dbm}
    d_{BM}(K,\B^n)
    & = \min_{A \in GL_n(\R)} \frac{R(AK)}{r(AK)}
    = \min_{A \in GL_n(\R)} \frac{D(AK)}{r(AK)} \frac{R(AK)}{D(AK)}
    \\
    & \leq \min_{A \in GL_n(\R)} \frac{D(AK)}{r(AK)} \sqrt{\frac{n}{2 (n+1)}}
    \leq \sqrt{2 n (n+1)} \, \sqrt{\frac{n}{2 (n+1)}}
    = n,
\end{split}
\end{align}
where equality from left to right requires by
Theorem~\ref{thm:aff_d_r_ratio} that $K$ is a simplex.
It might therefore be of interest to improve the uniqueness of
simplices as maximizers in Theorem~\ref{thm:aff_d_r_ratio}.
For example, a stability version of this result would
immediately yield a stability of simplices uniquely
maximizing the Banach-Mazur distance to $\B^n$ as well.
While the latter stability has been established in \cite{Ko},
it is unknown if it is of the best possible order.

Let us derive a second consequence of \eqref{eq:dbm}.
Since equality holds from left to right when $K$ is a simplex,
we must have equality in Jung's inequality for any affine transformation $AK$ that minimizes the $D/r$-ratio for $K$.
By the equality case in Jung's inequality, $AK$ must be regular.
Consequently,  we obtain the following corollary,
which is the analog of the result in \cite{Al}
that regular simplices uniquely maximize the width among
all simplices of a given circumradius.

\begin{corollary}
Regular simplices uniquely minimize the Euclidean diameter among all simplices of a given Euclidean inradius.
\end{corollary}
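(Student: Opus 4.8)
The plan is to deduce the corollary from Theorem~\ref{thm:aff_d_r_ratio} together with the equality case of Jung's inequality, essentially by inspecting when the chain \eqref{eq:dbm} collapses to a chain of equalities. Fix $r_0>0$ and let $\mathcal S$ be the family of all simplices $S\subset\R^n$ of Euclidean inradius $r_0$. Any simplex is of the form $AK+t$ for a fixed reference simplex $K$, some $A\in GL_n(\R)$, and $t\in\R^n$; since $D$ and $r$ are translation invariant and the $D/r$-ratio is scale invariant, Theorem~\ref{thm:aff_d_r_ratio} gives for every $S=AK+t\in\mathcal S$ that
\[
  D(S)=2r_0\,\frac{D(AK)}{2r(AK)}\ \geq\ 2r_0\min_{A'\in GL_n(\R)}\frac{D(A'K)}{2r(A'K)}=r_0\sqrt{2n(n+1)},
\]
with equality precisely when $\frac{D(AK)}{r(AK)}$ equals the minimal value $\sqrt{2n(n+1)}$. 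It thus remains to identify the simplices realizing this minimum.

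First I would check that a regular simplex $T$ realizes the bound, so that it is a genuine minimum over $\mathcal S$: by symmetry $\jel(T)$ and $\lel(T)$ are concentric balls with $R(T)/r(T)=n$, while the vertices of $T$ are pairwise at distance $D(T)$, so equality holds in Jung's inequality $R(T)=D(T)\sqrt{n/(2(n+1))}$; eliminating $R(T)$ yields $D(T)/r(T)=\sqrt{2n(n+1)}$. Rescaling $T$ to inradius $r_0$ produces an element of $\mathcal S$ of diameter $r_0\sqrt{2n(n+1)}$.

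The core step is the converse: if $A^*\in GL_n(\R)$ satisfies $\frac{D(A^*K)}{r(A^*K)}=\sqrt{2n(n+1)}$ (the minimal value), then $A^*K$ is regular. Following \eqref{eq:dbm}, I estimate
\[
  d_{BM}(K,\B^n)\ \leq\ \frac{R(A^*K)}{r(A^*K)}=\frac{R(A^*K)}{D(A^*K)}\cdot\frac{D(A^*K)}{r(A^*K)}\ \leq\ \sqrt{\frac{n}{2(n+1)}}\cdot\sqrt{2n(n+1)}=n,
\]
where the first inequality is the definition of $d_{BM}$ as an infimum over $GL_n(\R)$ (rescale $A^*K$ to inradius $1$) and the second is Jung's inequality $R\leq D\sqrt{n/(2(n+1))}$. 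Since $d_{BM}(K,\B^n)=n$ for every simplex $K$ (classical), both inequalities must be equalities; in particular $R(A^*K)=D(A^*K)\sqrt{n/(2(n+1))}$, i.e.\ equality holds in Jung's inequality for the simplex $A^*K$, whose $n+1$ vertices must then be pairwise at distance $D(A^*K)$ — so $A^*K$ is regular. Combining the two steps, the diameter over $\mathcal S$ is minimized exactly at the rescaled regular simplices.

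The only external inputs are the classical value $d_{BM}(\text{simplex},\B^n)=n$ and the equality case of Jung's inequality, both of which are standard. I do not expect a serious obstacle; the one point that needs care is conceptual rather than computational, namely to read \eqref{eq:dbm} as a chain that is forced to collapse when $K$ is a simplex, and to extract from that collapse that a $D/r$-minimizing affine image of $K$ realizes equality in Jung's inequality simultaneously with the $d_{BM}$-bound — which is exactly what identifies it as regular.
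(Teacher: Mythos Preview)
Your proof is correct and follows essentially the same approach as the paper: both argue that for a simplex $K$ the chain \eqref{eq:dbm} must collapse to equalities (since $d_{BM}(K,\B^n)=n$), forcing equality in Jung's inequality for any $D/r$-minimizing affine image and hence regularity. Your explicit verification that regular simplices attain the bound and your unpacking of Jung's equality case for simplices just make explicit what the paper leaves to the reader.
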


We turn to the proof of Theorem~\ref{thm:aff_d_r_ratio}.
The main idea to obtain the strict inequality for bodies that are not simplices
is to take advantage of Theorem~\ref{thm:inner}:
It either shows that transforming the body into John position already gives the strict inequality,
or it provides structural information about the body that,
together with Proposition~\ref{prop:john_vector_prop},
allows us to envelope the body with a certain polytope.
Based on this approximation, we then show that slight,
appropriate deviations from the John position reduce the $D/r$-ratio.
These slight deviations are provided by the lemma below.

\begin{lemma} \label{lem:aff_d_r_ratio_reduc}
Let $U \subset \R^n$ be a linear subspace,
$c \in U^\perp$,
$\alpha,\mu > 1$,
and let $A \colon \R^n \to \R^n$ be an affine linear map
defined by
$A(x) = \op{x}{U} + \alpha (\op{x}{U^\perp} - c) + c$.
Then
\[
    \B^n
    \subset A( \conv((\mu (\B^n \cap U) + c) \cup \B^n) )
    =: C
\]
and any common boundary point $x$ of $\B^n$ and $C$ satisfies $c^T x > 0$.
\end{lemma}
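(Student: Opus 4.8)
The plan is to obtain the inclusion $\B^n\subseteq C$ from a single convex–combination identity for $A^{-1}$, and to characterize the common boundary points by a support function computation. Throughout I would abbreviate $S\coloneqq\mu(\B^n\cap U)+c$, so that $C=A(\conv(S\cup\B^n))$, and write $A$ in the form $A(x)=Lx+(1-\alpha)c$ with the self-adjoint linear map $L\colon x\mapsto\op{x}{U}+\alpha\op{x}{U^\perp}$. Note that $A$ is a bijection (since $\alpha\neq0$), so $C$ is a convex body once the inclusion is established.

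\emph{The inclusion.} First I would invert $A$ directly: one gets $A^{-1}(y)=\op{y}{U}+\tfrac1\alpha(\op{y}{U^\perp}-c)+c$, which rearranges to
\[
    A^{-1}(y)=\Bigl(1-\tfrac1\alpha\Bigr)\bigl(\op{y}{U}+c\bigr)+\tfrac1\alpha\,y .
\]
Since $\alpha>1$, the coefficient $\tfrac1\alpha$ lies in $(0,1)$, so $A^{-1}(y)$ is a convex combination of $y$ and $\op{y}{U}+c$. For $y\in\B^n$ we have $\gauge{\op{y}{U}}\le\gauge{y}\le1\le\mu$, hence $\op{y}{U}+c=\mu\cdot\tfrac{\op{y}{U}}{\mu}+c\in S$; therefore $A^{-1}(y)\in\conv(S\cup\B^n)$. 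This gives $A^{-1}(\B^n)\subseteq\conv(S\cup\B^n)$, and applying $A$ yields $\B^n\subseteq A(\conv(S\cup\B^n))=C$.

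\emph{The boundary points.} Let $x$ be a common boundary point of $\B^n$ and $C$. Any supporting hyperplane of $C$ at $x$ supports the smaller body $\B^n$ at $x$ as well; since the only supporting hyperplane of $\B^n$ at $x\in\bd(\B^n)$ is $\{z\colon x^Tz=1\}$, it follows that $C\subseteq\{z\colon x^Tz\le1\}$, and together with $x\in\B^n\subseteq C$ this forces $h_C(x)=1$. On the other hand $h_C(x)=h_{\conv(S\cup\B^n)}(Lx)+(1-\alpha)x^Tc$, and using $Lc=\alpha c$ and $\op{Lx}{U}=\op{x}{U}$ to evaluate $h_S(Lx)=\mu\gauge{\op{x}{U}}+\alpha x^Tc$, the support function of a convex hull being the maximum of the two support functions gives
\[
    h_C(x)=\max\bigl\{\,\gauge{Lx}-(\alpha-1)x^Tc,\ \ \mu\gauge{\op{x}{U}}+x^Tc\,\bigr\}.
\]
Moreover $\gauge{Lx}^2=\gauge{\op{x}{U}}^2+\alpha^2\gauge{\op{x}{U^\perp}}^2\ge\gauge{x}^2=1$, with equality exactly when $x\in U$. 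Now suppose, for contradiction, that $x^Tc\le0$. If $x\notin U$, then the first entry of the maximum is $\ge\gauge{Lx}>1$; if $x\in U$, then $x^Tc=0$ and the second entry equals $\mu>1$. In either case $h_C(x)>1$, contradicting $h_C(x)=1$. Hence $x^Tc>0$, as claimed.

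\emph{Main obstacle.} The argument is mostly bookkeeping; the step I expect to cost the most care is the support function formula for $h_C$ — correctly propagating the orthogonal projections and the translation part $(1-\alpha)c$ through the affine map $A$, and reducing $h_S(Lx)$ by means of $Lc=\alpha c$ and $\op{Lx}{U}=\op{x}{U}$. Everything else (the convex–combination identity, the observation that a common boundary point forces $h_C(x)=1$) is short.
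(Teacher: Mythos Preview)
Your proof is correct. For the inclusion you and the paper use the same convex combination, just expressed via $A^{-1}$ rather than in the forward direction: the paper writes $x=\frac{\alpha-1}{\alpha}A(b+c)+\frac{1}{\alpha}A(x)$ with $b=\op{x}{U}$, which is precisely your identity after applying $A$. For the boundary claim both arguments first reduce to $h_C(x)=1$; you then compute the full support function of $C$ and argue by contradiction, whereas the paper instead reuses the convex combination from the first part: since $x$ lies strictly between $A(b+c)=b+c$ and $A(x)$, both of which are in $C$, both endpoints must have inner product $1$ with $x$, and $x^T(b+c)=1$ then yields $c^Tx=\gauge{\op{x}{U^\perp}}^2>0$ directly. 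Your explicit-$h_C$ route is a little more computational but equally valid; the paper's shortcut avoids exactly the support-function bookkeeping you flagged as the main obstacle.
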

\begin{proof}
We write $B \coloneqq \B^n \cap U$ and
$B' \coloneqq \B^n \cap U^\perp$.
Let $x \in \B^n$ be chosen arbitrarily.
Then there exist $b \in B$ and
$y \in \sqrt{1 - \gauge{b}^2} \, B'$
such that $x = b + y$.
We can compute
\[
    A(x)
    = b + \alpha (y - c) + c
\]
and
\[
    A(b + c)
    = b + \alpha(c-c) + c 
    = b + c,
\]
where $b+c \in \mu B + c$ by $\mu > 1$.
Since $\alpha > 1$,
\[
    x
    = b + y
    = \frac{\alpha-1}{\alpha} (b+c) + \frac{1}{\alpha} (b+\alpha(y-c)+c)
    \in (A(b+c),A(x))
    \subset C
\]
shows $\B^n \subset C$.

Now, additionally assume $x \in \bd(C)$.
Then $x$ must be a boundary point of $\B^n$ as well and therefore $C \subset H_{(x,1)}^\leq$.
Since $c,y \in U^\perp$, $B \subset U$, and $\mu B + c = A(\mu B + c) \subset C$ we have
\[
    1
    = h_C(x)
    \geq h_{\mu B + c}(b+y)
    = \mu \gauge{b} + c^T y
    \geq \mu \gauge{b} - \gauge{c} \gauge{y}
    = \mu \gauge{b} - \gauge{c} \sqrt{1 - \gauge{b}^2}.
\]
Therefore, $\mu > 1$ shows $\gauge{b} < 1$
and thus $y \neq 0$.
Additionally, $x \in (A(b+c),A(x)) \subset C$
and $h_C(x) = x^T x = 1$
imply $1 = x^T A(b+c) = x^T A(x)$,
where $c,y \in U^\perp$ and $b \in U$ yield
\[
    \gauge{b}^2 + \gauge{y}^2
    = 1
    = x^T A(b+c)
    = x^T (b+c)
    = \gauge{b}^2 + y^T c.
\]
It follows $c^T x = c^T y = \gauge{y}^2 > 0$ as claimed.
\end{proof}

\begin{proof}[Proof of Theorem~\ref{thm:aff_d_r_ratio}]
The only remaining part to prove is the strictness of
the upper bound for convex bodies that are not simplices.
To this end, w.l.o.g.~let $K \in \CK^n$ be in John position.
By Theorem~\ref{thm:inner}, we now have
$D(K) \leq \sqrt{2 n (n+1)}$,
where we may assume that equality holds.
Our goal is to show that either $K$ is a simplex
or that there exists some affine map $A$ such that
$r(AK) > 1$ and still
$D(AK) \leq \sqrt{2 n (n+1)}$.
This then completes the proof.

By Propsition~\ref{prop:john}, there exist some vectors
$u^1, \ldots, u^m \in \bd(K) \cap \bd(\B^n)$
and weights $\lambda_1, \ldots, \lambda_m > 0$
that form a John decomposition for $K$.
We define the polytope $P \coloneqq \{ x \in \R^n : x^T u^i \leq 1 \text{ for all } i \in [m] \}$,
which satisfies $\jel(P) = \B^n$ and $K \subset P \subset n \B^n$.
Any point $x \in P$ with $\gauge{x} = n$
is an extreme point of $n \B^n$ and therefore also of $P$.
Consequently, the set $X = \{x \in P : \gauge{x} = n\}$ is finite.
For any point $x \in X \setminus K$,
there exists some halfspace $H^\leq(x)$ with
$K \subset H^\leq(x)$ but $x \notin H^\leq(x)$.
We intersect $P$ with all of these finitely many halfspaces
to obtain another polytope $Q$,
which satisfies $K \subset Q \subset P \subset n \B^n$.
Moreover, $\jel(Q) = \B^n$ and
$\bd(Q) \cap \bd(n \B^n) = \bd(K) \cap \bd(n \B^n)$.
Since $Q$ is a polytope,
there exist only finitely many pairs of vertices of $Q$
such that at least one of them has Euclidean norm less than $n$.
By Proposition~\ref{prop:john_vector_prop}~(iii),
the distance between any such pair of points
is strictly less than $\sqrt{2 n (n+1)}$.
Consequently, there exists some $M < \sqrt{2 n (n+1)}$
such that any two vertices of $Q$ of which at least one has Euclidean norm less than $n$ are at distance at most $M$ from each other.

By $D(K) = \sqrt{2 n (n+1)}$
and Proposition~\ref{prop:john_vector_prop}~(iii),
the set $\bd(K) \cap \bd(n \B^n)$
contains at least two points
and any two points in this set are at Euclidean distance $\sqrt{2 n (n+1)}$ from each other.
Thus, $\bd(K) \cap \bd(n \B^n)$ is affinely independent
and its convex hull $T$ is a regular $k$-simplex
for some $k \in [n]$.
We write $x^1, \ldots, x^{k+1}$ for the vertices of $T$.

We consider two cases based on $k$.
If $k = n$, then $T$ is a regular $n$-simplex with all vertices
in $\bd(n \B^n)$.
It is well-known that such a simplex can be written as
\[
    T
    = \bigcap_{i \in [n+1]} H_{(-x^i,n)}^\leq.
\]
By Proposition~\ref{prop:john_vector_prop}~(i) and $x^i \in K$,
any $x \in K$ satisfies $x^T (-x^i) \leq n$
for all $i \in [n+1]$.
This implies $K \subset T$.
Since $T \subset K$ is true by definition,
we conclude that $K = T$ is a simplex in case $k=n$.

Now, assume $k < n$.
Let $U \subset \R^n$ be the linear $k$-space
parallel to $\aff(T)$
and define $B \coloneqq U \cap \B^n$.
Since $T$ is a regular $k$-simplex with edge length
$\sqrt{2 n (n+1)}$,
we know from Proposition~\ref{prop:ball_thm}
for $c \coloneqq \sum_{i \in [k+1]} \frac{x^i}{k+1}$
and $\mu \coloneqq \sqrt{\frac{n (n+1)}{k (k+1)}} > 1$
that $T$ contains $\mu B + c$.
Since $\mu B + c \subset T \subset K$,
the equality conditions in Theorem~\ref{thm:inner}
imply $c \in U^\perp$.
With the affine map $A \colon \R^n \to \R^n$ as defined in
Lemma~\ref{lem:aff_d_r_ratio_reduc} for
$\alpha \coloneqq \frac{\sqrt{2 n (n+1)}}{M} > 1$,
we obtain $\B^n \subset A K$
and $\bd(\B^n) \cap \bd(A K)$ belongs entirely to some open halfspace whose boundary contains the origin. 
The well-known Halfspace Lemma characterizing Euclidean inballs
(cf.~\cite[p.~55]{BoFe})
therefore shows that $\B^n$ is not an inball of $AK$,
i.e., $r(AK) > 1$.

Finally, we prove
$D(AK) \leq D(AQ) \leq \sqrt{2 n (n+1)}$.
The first inequality follows directly from $AK \subset AQ$.
For the second inequality, if suffices to show
$\gauge{A(x)-A(y)} \leq \sqrt{2 n (n+1)}$
for every pair of vertices $x,y$ of $Q$.

If $\gauge{x} = \gauge{y} = n$,
then $Q \cap \bd(n \B^n) = K \cap \bd(n \B^n)$ implies that
$x,y$ are also vertices of $T$,
so $x,y \in c + U$.
It follows $A(x) = x$, $A(y) = y$, and therefore
\[
    \gauge{A(x)-A(y)}
    = \gauge{x-y}
    \leq \sqrt{2 n (n+1)}.
\]
If instead at least one of $x,y$ has Euclidean norm less than $n$,
we first observe that
$f \colon \R^n \to \R^n$,
$f(x) = (\op{x}{U}) + \alpha (\op{x}{U^\perp})$,
is the linear part of $A$ and satisfies $\gauge{f} \leq \alpha$.
Hence,
\[
    \gauge{A(x)-A(y)}
    = \gauge{f(x-y)}
    \leq \gauge{f} \gauge{x-y}
    \leq \alpha M
    = \sqrt{2 n (n+1)}.
\]
This completes the case for $k < n$ and therefore the proof of the theorem.
\end{proof}

Let us close this final section with some remarks.
The study of ratios of radial functionals optimized under affinity appears to be an interesting new direction,
especially for ratios where ordinarily no non-trivial bounds could be obtained.
Theorem~\ref{thm:aff_d_r_ratio} and its consequences are a first indication of what might be achievable.
Consider, for instance, the following question:
Defining for symmetric $C \in \CK^n$ the \emph{Jung constant} $j_C$ as the maximum $C$-circumradius of convex bodies of unit $C$-diameter, does there exist for any convex body $K \in \CK^n$ an affine transformation $A K$ with
$\frac{D(A K,C)}{r(A K,C)} \leq \frac{n}{j_C}$?
By Theorems~\ref{thm:general_aff_ratios}~and~\ref{thm:aff_d_r_ratio}, the answer is affirmative if $C$ is a parallelotope or an ellipsoid.
Whenever the question statement is true for some $C$, we obtain a direct strengthening of the fact that no convex body is at Banach-Mazur distance larger than $n$ to $C$.
In this case, we might also ask for which bodies $K$ there exists no affine transformation such that $\frac{D(A K,C)}{r(A K,C)} < \frac{n}{j_C}$.
This would always apply for simplices, but are there any other such bodies?
If not, then we would also obtain a positive answer to the conjecture in \cite{HuNa} that $d_{GR}(K,C) = n$ is fulfilled only if one of $K$ and $C$ is a simplex for this specific $C$.

Finally, let us provide partial answers to both of the above posed questions.
It is known that always $(s(K)+1) r(K,C) \leq D(K,C)$,
with equality if $K$ is \emph{(pseudo-)complete} with respect to symmetric $C$ (see \cite[Remark~6.3]{BrKoSharp} for the inequality and \cite{BrGojung} for the notion of (pseudo-)completeness).
Therefore, if more generally $K$ has an affine transformation $BK$ that is pseudo-complete with respect to $C$, then
\[
    \min_{A \in GL_n(\R)} \frac{D(AK,C)}{r(AK,C)}
    = \frac{D(BK,C)}{r(BK,C)}
    = s(K)+1
    \leq n+1
    \leq \frac{n}{j_C},
\]
where the right-most inequality follows from $j_C \leq \frac{n}{n+1}$ by Bohnenblust's inequality.
Equality from left to right requires $s(K) = n$,
i.e., that $K$ is a simplex.

\bigskip

René Brandenberg --
Technical University of Munich, Department of Mathematics, Germany. \\
\textbf{rene.brandenberg@tum.de}

Florian Grundbacher -- 
Technical University of Munich, Department of Mathematics, Germany. \\
\textbf{florian.grundbacher@tum.de}

\vfill\eject

\appendix
\section{Approximating Convex Bodies by Full-Dimensional Ellipsoids}
\label{app:sJ}

In this part, we discuss some differences between the Minkowski asymmetry and the John asymmetry.
In passing, we shall also provide some related details about the bounds in Proposition~\ref{prop:john_rounding}.

As mentioned in the introduction, simplices are not the only convex bodies that attain John asymmetry $n$.
This was falsely claimed in \cite{Gr} (and cited as such in \cite{BrKoSharp}),
so we correct this mistake with Example~\ref{ex:john_rounding} below.
The error seems to be based on the following:
Simplices are indeed the only convex bodies with Euclidean circumball $n \B^n$ when transformed into John position
as discussed in Section~\ref{sec:aff_ratios}.
However, this does not mean that simplices are the only convex bodies for which $n \B^n$ is the smallest \emph{origin-centered} Euclidean ball that contains the convex body when transformed into John position.

In fact, a closely related question concerning Proposition~\ref{prop:john_rounding} was considered in \cite{BrKoSharp}.
It asks whether the John asymmetry in the upper bound can be replaced by the Minkowski asymmetry.
A similar strengthening has been conjectured in \cite[Remark~4]{BeFr},
there based on the Loewner ellipsoid.
A positive answer to these questions would imply the above incorrect statement about smallest origin-centered Euclidean balls containing bodies in John position,
so Example~\ref{ex:john_rounding} provides a negative answer to these questions as well.
More precisely, the example shows that $\rho = n$ can be the smallest possible choice in Proposition~\ref{prop:john_rounding} for convex bodies $K \in \CK^n$ with $s(K) < 2$ in any dimension.
In particular, the ratio between Minkowski asymmetry and John asymmetry can be of linear order in $n$ (cf.~Remark~\ref{rem:s_vs_sJ} below).

Example~\ref{ex:john_rounding} and the discussion below it additionally verify the novel parts of Proposition~\ref{prop:john_rounding} compared to \cite[Theorem~7.1]{BrKoSharp}.
Originally, the Minkowski asymmetry was used for the lower bound.
Since $s(K) \leq s_J(K)$, using the latter as lower bound in the proposition means a strengthening of the old one.
It is immediately obtained from the chain of containments $K \subset \rho \B^n \subset \rho (-K)$.
The tightness of the strengthened lower bound is discussed after Example~\ref{ex:john_rounding}.
The tightness of the upper bound in the proposition is also not considered in \cite{BrKoSharp},
which is obtained directly from the example.

\begin{example} \label{ex:john_rounding} 
Let $s \in [1,n]$, $v \in \bd(\B^n)$, and 
\[
    K \coloneqq \conv \left( \B^n \cup \left\{ - \sqrt{n s} \, v, \sqrt{\frac{n}{s}} \, v \right\} \right).
\]
Then $K$ satisfies $\jel(K) = \B^n$,
$s_J(K) = s$,
$K \cap \bd(\sqrt{ns} \, \B^n) \neq \emptyset$,
and $s(K) = \frac{2s}{s+1}$.
Moreover, $c \coloneqq -\sqrt{n s} \frac{s-1}{3s+1} v$
is the unique \cemph{Minkowski center} of $K$,
i.e., the only point with $K-c \subset s(K) (c-K)$.
\end{example} 
\begin{proof}
We prove $\jel(K) = \B^n$ by applying
Lemma~\ref{lem:construction}
for any orthonormal basis
$v^1, \ldots, v^n \in \R^n$ with $v^n = v$,
$J = [n-1]$,
and $\tau = \sqrt{\frac{s}{n}}$,
which satisfies
\[
    \sqrt{\frac{n - (n-1)}{n ((n - 1) +1)}}
    = \frac{1}{n}
    \leq \sqrt{\frac{1}{n}}
    \leq \tau
    \leq 1.
\]
To this end, we reuse the notation in
Lemma~\ref{lem:construction}.
For $\delta, \sigma \in \{-1, 1\}^{[n-1]}$, we have
\[
    (- \sqrt{n s} \, v^n)^T a^{\delta,\tau}
    < 0
    < 1
    = \left( \sqrt{\frac{n}{s}} \, v^n \right)^T a^{\delta,\tau},
\]
as well as
\[
    \left( \sqrt{\frac{n}{s}} \, v^n \right)^T b^{\sigma,\tau}
    < 0
    < 1
    = \frac{\sqrt{n s}}{n \sqrt{\frac{s}{n}}}
    = (- \sqrt{n s} \, v^n)^T b^{\sigma,\tau}.
\]
This shows $\B^n \subset K \subset P(J,\tau)$,
so Lemma~\ref{lem:construction} yields $\jel(K) = \B^n$.

For the claim $s_J(K) = s$,
it is easy to see that $-K \subset s K$,
i.e., $s_J(K) \leq s$.
The Cauchy-Schwarz inequality further yields
$h_K(v) = \sqrt{\frac{n}{s}}$ and
$h_{-K}(v) = \sqrt{n s}$,
which shows that $-K \subset \rho K$ for $\rho \geq 0$
requires $\rho \geq s$.
This shows $s_J(K) \geq s$.
The claim $K \cap \bd(\sqrt{ns} \, \B^n) \neq \emptyset$
is obviously true.

Finally, to prove $s(K) = \frac{2s}{s+1}$,
it suffices by \cite[Theorem~2.3]{BrKoDim}
to show for $a \in \R^n$ that
\begin{equation} \label{eq:john_rounding_mink}
    h_{K-c}(a) \leq \frac{2 s}{s+1} h_{K-c}(-a),
\end{equation}
with equality for some $a^1, \ldots, a^{n+1} \in \R^n \setminus \{0\}$ satisfying $0 \in \inte(\conv(\{a^1, \ldots, a^{n+1}\}))$.
This also suffices for the uniqueness of the Minkowski center $c$:
If $c' \neq c$ would be another Minkowski center of $K$,
then $0 \in \inte(\conv(\{a^1, \ldots, a^{n+1}\})$
shows that some $a^i$ satisfies $(c-c')^T a^i > 0$.
It follows
\[
    h_{K-c'}(a^i)
    > h_{K-c}(a^i)
    = \frac{2s}{s+1} h_{K-c}(-a^i)
    > \frac{2s}{s+1} h_{K-c'}(-a^i),
\]
contradicting $K-c' \subset \frac{2s}{s+1} (c'-K)$
for the Minkowski center $c'$ of $K$.
Hence, we know that no additional Minkowski center exists
once the existence of the $a^i$ as described above is proved.

To establish \eqref{eq:john_rounding_mink},
let $\mu_1, \ldots, \mu_n \in \R$ and
$a = \sum_{i \in [n]} \mu_i v^i$.
Then
\begin{align*}
    h_{K-c}(a)
    & = \max \left\{ - \sqrt{n s} \, a^T v^n, \sqrt{\frac{n}{s}} \, a^T v^n, h_{\B^n}(a) \right\} - a^T c
    \\
    & = \max \left\{ \sqrt{n s} \frac{2 (s+1)}{3s+1} (-\mu_n), \sqrt{n s} \frac{(s+1)^2}{s (3s+1)} \mu_n, \sqrt{n s} \frac{s-1}{3s+1} \mu_n + \gauge{a} \right\}
\end{align*}
First, assume $\gauge{a} \leq \sqrt{n s} \, \vert \mu_n \vert$.
If additionally $\mu_n \leq 0$, then
\[
    h_{K-c}(a)
    = \sqrt{n s} \frac{2 (s+1)}{3s+1} (-\mu_n)
    = \frac{2s}{s+1} \sqrt{n s} \frac{(s+1)^2}{s (3s+1)} (-\mu_n)
    \leq \frac{2s}{s+1} h_{K-c}(-a),
\]
with equality in the last step if
$\gauge{a} \leq \sqrt{\frac{n}{s}} \, \vert \mu_n \vert$,
which applies in particular if
$\mu_i = 0$ for all $i \neq n$.
If instead $\mu_n \geq 0$,
we have by $(s+1)^2 \leq 4s^2$ that
\[
    h_{K-c}(a)
    \leq \sqrt{n s} \frac{4 s}{3s+1} \mu_n
    = \frac{2s}{s+1} \sqrt{n s} \frac{2 (s+1)}{3s+1} \mu_n
    = \frac{2s}{s+1} h_{K-c}(-a),
\]
with equality in the first step whenever $\gauge{a} = \sqrt{n s} \, \mu_n$.
Lastly, if $\gauge{a} > \sqrt{n s} \, \vert \mu_n \vert$, then
\begin{align*}
    h_{K-c}(a)
    & = \sqrt{n s} \frac{s-1}{3s+1} \mu_n + \gauge{a}
    < \frac{4s}{3s+1} \gauge{a}
    \leq \left( \frac{2s}{s+1} - \frac{s-1}{3s+1} \right) \gauge{a}
    \\
    & < \frac{2s}{s+1} \left( \sqrt{n s} \frac{s-1}{3s+1} \mu_n + \gauge{a} \right)
    \leq \frac{2s}{s+1} h_{K-c}(-a).
\end{align*}
Altogether, the required inequality \eqref{eq:john_rounding_mink} is established.
We also showed equality in \eqref{eq:john_rounding_mink} whenever $a = - v^n$ or $a \in \bd(\B^n)$ with $a^T v^n = \frac{1}{\sqrt{ns}}$,
so the proof is complete as discussed above.
\end{proof}

Let us now briefly discuss the tightness of the lower bound in Proposition~\ref{prop:john_rounding}. The above example shows for any $v \in \bd(\B^n)$ that $\conv(\B^n \cup \{nv\})$ is in John position.
Since $K = \conv(\B^n \cup \{sv\})$ for $s \in [1,n]$ is a subset of this convex body and $\B^n \subset K$, we obtain $\jel(K) = \B^n$ as well. It is clear from the definition that $K \subset s \B^n$, and $s_J(K) = s$ is easy to see as well.

\begin{remark} \label{rem:s_vs_sJ}
Example~\ref{ex:john_rounding} yields results on the extremal behavior of the Minkowski asymmetry and Minkowski centers
when compared to their John counterparts.
The example shows that the Minkowski asymmetry of a convex body $K \in \CK^n$
can be as small as $\frac{2 s_J(K)}{s_J(K)+1} < 2$ for any prescribed $s_J(K) \in [1,n]$.
In particular, the ratio $\frac{s_J(K)}{s(K)}$ can be as large as $\frac{n+1}{2}$,
which is of linear order in $n$ (the largest possible order).

The situation for the norm of the Minkowski centers is similar as for the Minkowski asymmetry.
Example~\ref{ex:john_rounding} applied for $s=n$ shows that the Euclidean norm of the (unique) Minkowski center of a convex body in John position can be as large as $n \frac{n-1}{3n+1}$,
which behaves asymptotically like $\frac{n}{3}$.
In particular, the norm is of linear order in $n$.
This order is by Proposition~\ref{prop:john_rounding} again the largest possible.
Our observation parallels a result in \cite{Huang},
where it is shown that the norm of the center of mass of a convex body in John position can also be of linear order in $n$.
\end{remark}

\newpage
\section{Technical Analysis of Functions}
\label{app:technical}

We provide the technical details for the proof of Lemma~\ref{lem:mu_tau_prop} that were omitted in Section~\ref{sec:inner}.

\begin{proof}[Proof of Lemma~\ref{lem:mu_tau_prop}]
For (i), we observe that $\mu_{n,k}$ is differentiable in $s \in (1,1+\frac{2}{n})$ with derivative
\[
    \mu_{n,k}'(s)
    = \frac{n}{8 (k+1)} \left( 4k + 2 n (s-1)
    + \frac{2}{n}
    \frac{2 n (s-1) (3ks+k+2)-8ks+n^2 (s-1)^3}{\zeta_{n,k}(s)} \right).
\]
Now, $\mu_{n,k}'(s) > 0$ is equivalent to
\begin{equation} \label{eq:mu_deriv_pos}
    (4k + 2 n (s-1)) \zeta_{n,k}(s) + \frac{2}{n} \left( 2 n (s-1) (3ks+k+2)-8ks+n^2 (s-1)^3 \right) > 0.
\end{equation}
From the definition of $\zeta_{n,k}(s)$,
it is clear that
\[
    \zeta_{n,k}(s)
    \geq \sqrt{\left( 2 \left( 1+\frac{2}{n}-s \right) - (s^2-1) \right)^2}
    \geq 2 \left( 1+\frac{2}{n}-s \right) - (s^2-1).
\]
To verify \eqref{eq:mu_deriv_pos},
it thus suffices by $4k + 2n (s-1) \geq 0$ to show
\[
    (4k + 2 n (s-1)) \left( 2 \left( 1+\frac{2}{n}-s \right) - (s^2-1) \right)
    + \frac{2}{n} \left( 2 n (s-1) (3ks+k+2)-8ks+n^2 (s-1)^3 \right)
    > 0.
\]
A tedious, but not difficult computation shows that
the left-hand side can be simplified to
\[
    8 (n-k) \left( 1+\frac{2}{n}-s \right) (s - 1),
\]
which is indeed positive for $s \in (1,1+\frac{2}{n})$.
Thus, $\mu_{n,k}$ is strictly increasing.
A direct computation finally shows
$\zeta_{n,k}(1) = \frac{4}{n}$,
$\mu_{n,k}(1) = \sqrt{n}$,
$\zeta_{n,k}(1+\frac{2}{n}) = \frac{4}{n} + \frac{4}{n^2}$,
and $\mu_{n,k}(1+\frac{2}{n}) = \sqrt{n+1}$,
so (i) is established.

For (ii),
it is clear that $\tau_{n,k}$ is differentiable in
$\mu \in (n, n + 1)$ with derivative
\[
    \frac{
        \left(
            \frac{k}{2 \sqrt{\mu-n}}
            + \frac{2 (k+1) \mu - k (n+1)}{2 \sqrt{(k(\mu-n)+\mu-k) \mu}}
        \right) (k (\mu-n)+\mu)
        - (k+1) \left(
            k \sqrt{\mu-n}
            + \sqrt{(k(\mu-n)+\mu-k) \mu}
        \right)
    }
    {(k (\mu-n)+\mu)^2}.
\]
We claim that this term is positive.
To prove this,
we multiply the numerator of the derivative with
$\frac{2}{k} \sqrt{\mu-n} \, \sqrt{(k(\mu-n)+\mu-k) \mu}$
such that it can be written as $g^+(\mu) - g^-(\mu)$ with
\begin{align*}
    g^+(\mu)
    & \coloneqq \sqrt{(k (\mu-n)+\mu-k) \mu} \,
    ((k (\mu-n) + \mu) - 2 (k+1) (\mu-n))
    \\
    & = \sqrt{(k (\mu-n)+\mu-k) \mu} \,
    (n (k+2) - (k+1) \mu)
\intertext{
and
}
    g^-(\mu)
    & \coloneqq \sqrt{\mu-n}
    \frac{(2 (k+1) \mu - k(n+1))(k(\mu-n)+\mu)
    - 2 (k+1) \mu (k(\mu-n)+\mu-k)}{k}
    \\
    & = \sqrt{\mu-n}
    \frac{2 (k+1) \mu k - k (n+1)(k(\mu-n)+\mu)}{k}
    \\
    & = \sqrt{\mu-n} \,
    (k n (n+1) - (n-1)(k+1) \mu).
\end{align*}
Since $\mu < n+1$ and
$n(k+2) - (k+1)(n+1) = n - (k+1) \geq 0$,
we have $g^+(\mu) > 0$.
Thus, it suffices to prove that $g^+(\mu)^2 > g^-(\mu)^2$
to obtain $g^+(\mu) - g^-(\mu) > 0$.
It is possible to factorize
\[
    g^+(\mu)^2 - g^-(\mu)^2
    = (n + 1 - \mu) (n (n+1) - (k+1) \mu) ((k+1) \mu - n k)^2.
\]
By $n < \mu < n+1$ and $k+1 \leq n$,
we conclude that this expression is indeed positive.
Altogether,
it follows that $\tau_{n,k}$ is strictly increasing.
A direct computation of $\tau_{n,k}(n)$ and $\tau_{n,k}(n+1)$ completes the proof of (ii).

For (iii), let $\mu \in [n,n+1]$.
We observe that $p_\mu \colon \R \to \R$,
$p_\mu(t) = (t \sqrt{\mu-n} - 1)^2 - \mu \frac{1-t^2}{k}$
is a polynomial of degree $2$ in $t$ with zeros
\begin{equation} \label{eq:p_zeros}
    t_\mu^\pm
    \coloneqq \frac{
            \sqrt{\mu-n}
            \pm \sqrt{
                (\mu-n)
                - (\mu-n+\frac{\mu}{k})
                    (1-\frac{\mu}{k})
            }
        }{(\mu-n+\frac{\mu}{k})}
    = \frac{
        k \sqrt{\mu-n}
        \pm \sqrt{(k (\mu-n) + \mu - k) \mu}
    }{k (\mu - n) + \mu}.
\end{equation}
We note that $t_\mu^+ = \tau_{n,k}(\mu) > 0$,
and $t_\mu^- < 0$ by
\[
    \sqrt{(k(\mu-n)+\mu-k) \mu}
    > \sqrt{k (\mu-n) \mu}
    \geq k \sqrt{\mu-n}.
\]

Lastly, we turn to (iv) and let $s \in [1,1+\frac{2}{n}]$.
Since $\mu_{n,k}(1) = n$,
the claim holds if $s=1$.
For $s \neq 1$, we have for any $\mu \in [n,n+1]$ that
\begin{equation} \label{eq:p_q_relation}
    p_\mu \left( \frac{2 \sqrt{\mu-n}}{n (s-1)} \right)
    = \left( \frac{2 (\mu-n)}{n (s-1)} - 1 \right)^2
    - \mu \frac{n^2(s-1)^2 - 4(\mu-n)}{k n^2 (s-1)^2}
    = \frac{q(\mu)}{k n^2 (s-1)^2},
\end{equation}
where $q \colon \R \to \R$
is for given $n$, $k$, and $s$
a polynomial of degree $2$ defined by
\[
    q(t)
    = k (2t - n(s+1))^2 + t (4 (t-n) - n^2 (s - 1)^2).
\]
It is straightforward to verify that the zeros of $q$ are
\[
    t_q^\pm
    \coloneqq \frac{n}{8 (k+1)}
    \left(
        n (s-1)^2 + 4 k (s+1) + 4
        \pm \sqrt{
            (n (s-1)^2 + 4 k (s+1) + 4)^2
            - 16 (k+1) k (s+1)^2
        }
    \right),
\]
where a tedious, but not difficult computation shows
\[
    (n (s-1)^2 + 4 k (s+1) + 4)^2 - 16 (k+1) k (s+1)^2
    = n^2 \zeta_{n,k}(s)^2.
\]
Now, $\mu_{n,k}(s) = t_q^+$ together with \eqref{eq:p_q_relation} yields
\[
    p_{\mu_{n,k}(s)} \left( \frac{2 \sqrt{\mu_{n,k}(s)-n}}{n (s-1)} \right)
    = \frac{q(\mu_{n,k}(s))}{k n^2 (s-1)^2}
    = 0.
\]
This shows that $\frac{2 \sqrt{\mu_{n,k}(s)-n}}{n (s-1)}$
is a non-negative zero of $p_{\mu_{n,k}(s)}$,
so (iii) shows
\[
    \frac{2 \sqrt{\mu_{n,k}(s)-n}}{n (s-1)}
    = t_{\mu_{n,k}(s)}^+
    = \tau_{n,k}(\mu_{n,k}(s)).
\]
Altogether, (iv) is established.
\end{proof}


\begin{thebibliography}{99}
\bibitem{AkBaGr} A.~Akopyan, A.~Balitskiy, M.~Grigorev. On the Circle Covering Theorem by A.W. Goodman and R.E. Goodman, \textit{Discrete Comput.~Geom.}~\textbf{59} (2018), no.~4, 1001--1009.
%
\bibitem{Al} R.~Alexander. The width and diameter of a simplex, \textit{Geom.~Dedicata}~\textbf{6} (1977), 87--94.
%
\bibitem{BallRev} K.~Ball. Volume Ratios and a Reverse Isoperimetric Inequality, \textit{J.~London Math.~Soc.}~\textbf{s2-44} (1991), no.~2, 351--359.
%
\bibitem{Ball} K.~Ball. Ellipsoids of maximal volume in convex bodies, \textit{Geom.~Dedicata}~\textbf{41} (1992), no.~2, 241--250.
%
\bibitem{Be} F.~Behrend. \"Uber einige Affininvarianten konvexer Bereiche, \textit{Math.~Ann.}~\textbf{113} (1937), 713--747.
%
\bibitem{BeFr} A.~Belloni, R.~M.~Freund. On the symmetry function of a convex set, \textit{Math.~Program.}~\textbf{111} (2008), no.~1--2, 57--93.
%
\bibitem{BeHe} U.~Betke, M.~Henk. A generalization of Steinhagen's theorem, \textit{Abh.~Math.~Semin.~Univ.~Hambg.}~\textbf{63} (1993), 165--176.
%
\bibitem{Bi} L.~Bieberbach. \"Uber eine Extremaleigenschaft des Kreises, \textit{Jahresber.~Dtsch.~Math.-Ver.}~\textbf{24} (1915), 247--250.
%
\bibitem{Bo} F.~Bohnenblust. Convex regions and projections in Minkowski spaces, \textit{Ann.~of Math.}~\textbf{39} (1938), no.~2, 301--308.
%
\bibitem{BoFe} T.~Bonnesen, W.~Fenchel. Theorie der konvexen Körper, \textit{Ergebnisse der Mathematik und ihrer Grenzgebiete. 1. Folge 3}, Springer Berlin, Heidelberg, (1934).
%
\bibitem{Boro} K.~J.~B\"or\"oczky. The stability of the Rogers-Shephard inequality and of some related inequalities, \textit{Adv.~Math.}~\textbf{190} (2005), no.~1, 47--76.
%
\bibitem{Br} R.~Brandenberg. Radii of Regular Polytopes, \textit{Discrete Comput.~Geom.}~\textbf{33} (2005), no.~1, 43--55.
%
\bibitem{BDG} R.~Brandenberg, K.~von Dichter, B.~Gonz\'alez Merino. Tightening and reversing the arithmetic-harmonic mean inequality for symmetrizations of convex sets, \textit{Commun.~Contemp.~Math.}~\textbf{25} (2023), no.~9, 2250045.
%
\bibitem{BrGo} R.~Brandenberg, B.~Gonz\'alez Merino. Minkowski Concentricity and Complete Simplices, \textit{J.~Math.~Anal.~Appl.}~\textbf{454} (2017), no.~2, 981--994.
%
\bibitem{BrGojung} {R.~Brandenberg, B.~Gonz\'alez Merino. The Asymmetry of Complete and Constant Width Bodies in General Normed Spaces and the Jung Constant}, \textit{Israel~J.~Math.}~\textbf{218} (2017), no.~1, 489--510.
%
\bibitem{BrGorRD} R.~Brandenberg, B.~Gonz\'alez Merino. Behavior of the inradius, circumradius, and diameter in generalized Minkowski spaces, \textit{RACSAM}~\textbf{116} (2022), 105.
%
\bibitem{BrKoDim} R.~Brandenberg, S.~K\"onig. No Dimension-Independent Core-Sets for Containment Under homothetics, \textit{Discrete Comput.~Geom.}~\textbf{49} (2013), no.~1, 3--21. 
%
\bibitem{BrKoSharp} R.~Brandenberg, S.~K\"onig. Sharpening Geometric Inequalities Using Computable Symmetry Measures, \textit{Mathematika}~\textbf{61} (2015), no.~3, 559--580.
%
\bibitem{GoSh} B.~Gonz\'alez Merino, M.~Schymura. On the reverse isodiametric problem and Dvoretzky-Rogers-type volume bounds, \textit{RACSAM}~\textbf{114} (2020), 136.
%
\bibitem{GLMP} Y.~Gordon, A.~E.~Litvak, M.~Meyer, A.~Pajor. John's Decomposition in the General Case and Applications, \textit{J.~Differential Geom.}~\textbf{68} (2004), no.~1, 99--119.
%
\bibitem{GrKlRad} P.~Gritzmann, V.~Klee. Inner and Outer $j$-Radii of Convex Bodies in Finite-Dimensional Normed Spaces, \textit{Discrete Comput.~Geom.}~\textbf{7} (1992), no.~3, 255--280.
%
\bibitem{Gr} B.~Gr\"unbaum. Measures of symmetry for convex sets, \textit{In: Klee, V.~L.~(ed.) Convexity. Proc.~Sympos.~Pure Math.}~\textbf{7} (1963), 233--270.
%
\bibitem{GrKo} F.~Grundbacher, T.~Kobos. On Certain Extremal Banach-Mazur Distances in Small Dimensions, arXiv preprint 2407.08829v1.
%
\bibitem{He} M.~Henk. A generalization of Jung's theorem, \textit{Geom.~Dedicata}~\textbf{42} (1992), 235--240.
%
\bibitem{Huang} H.~Huang. John Ellipsoid and the Center of Mass of a Convex Body, \textit{Discrete Comput.~Geom.}~\textbf{60} (2018), no.~4, 809--830.
%
\bibitem{HuNa} C.~Hugo Jim\'enez, M.~Nasz\'odi. On the Extremal Distance Between Two Convex Bodies, \textit{Israel J.~Math.}~\textbf{183} (2011), 103--115.
%
\bibitem{Iv} G.~Ivanov. On the Volume of the John-Löwner Ellipsoid, \textit{Discrete Comput.~Geom.}~\textbf{63} (2020), no.~2, 455--459.
%
\bibitem{Jo1} F.~John. Moments of inertia of convex regions, \textit{Duke Math.~J.}~\textbf{2} (1936), no.~3, 447--452.
%
\bibitem{Jo} F.~John, \emph{Extremum problems with inequalities as subsidiary conditions}, In: Studies and essays presented to R.~Courant on his 60th Birthday, January 8, 1948, Interscience Publishers, Inc., New York (1948), 187--204.
%
\bibitem{Ju} H.~Jung. \"Uber die kleinste Kugel, die eine r\"aumliche Figur einschliesst, \textit{J.~Reine Angew.~Math.}~\textbf{123} (1901), 241--257.
%
\bibitem{Ko} T.~Kobos. Stability Result for the Extremal Gr\"unbaum Distance Between Convex Bodies, \textit{J.~Convex Anal.}~\textbf{26} (2019), no.~4, 1277--1296. 
%
\bibitem{Ko2d} T.~Kobos. Extremal Banach-Mazur Distance Between a Symmetric Convex Body and an Arbitrary Convex Body on the Plane, \textit{Mathematika}~\textbf{66} (2020), no.~1, 161--177.
%
\bibitem{Le} K.~Leichtweiss. Zwei Extremalprobleme der Minkowski-Geometrie, \textit{Math.~Z.}~\textbf{62} (1955), 37--49.
%
\bibitem{Le2} K.~Leichtweiss. \"Uber die affine Exzentrizit\"at konvexer K\"orper, \textit{Arch.~Math.}~\textbf{10} (1959), 187--199.
%
\bibitem{Pal} J.~P\'al. Ein Minimumproblem f\"ur Ovale, \textit{Math.~Ann.}~\textbf{83} (1921), no.~3--4, 311--319.
%
\bibitem{Pa} O.~Palmon. The only convex body with extremal distance from the ball is the simplex, \textit{Israel J. Math.}~\textbf{80} (1992), no.~3, 337--349.
%
\bibitem{Sch} R.~Schneider. Stability for some extremal properties of the simplex, \textit{J.~Geom.}~\textbf{96} (2009), 135--148.
%
\bibitem{St} P.~Steinhagen. \"Uber die gr\"oßte Kugel in einer konvexen Punktmenge, \textit{Abh.~Math.~Semin.~Univ.~Hambg.}~\textbf{1} (1922), 15--26.
\end{thebibliography}
\end{document}